\theoremstyle{plain}
\newtheorem{proposition}{Proposition}[section]
\newtheorem{theorem}[proposition]{Theorem}
\newtheorem{corollary}[proposition]{Corollary}
\newtheorem{lemma}[proposition]{Lemma}
\theoremstyle{definition}
\newtheorem{definition}[proposition]{Definition}
\newtheorem{remark}[proposition]{Remark}
\newcommand{\vnten}{{\overline\otimes}}
\newcommand{\mc}{\mathcal}
\newcommand{\G}{\mathbb{G}}
\renewcommand{\H}{\mathbb{H}}
\newcommand{\K}{\mathbb{K}}
\newcommand{\ip}[2]{\langle{#1},{#2}\rangle}
\newcommand{\hh}{\widehat}
\newcommand{\id}{\operatorname{id}}
\newcommand{\Gr}{\operatorname{Gr}}
\newcommand{\spec}{\operatorname{sp}}
\newcommand{\ww}{W}
\newcommand{\WW}{\mathbb{W}}
\newcommand{\Ww}{\mathds{W}}
\newcommand{\wW}{\text{\reflectbox{$\Ww$}}\:\!} 
\begin{document}

\large
\title{Categorical aspects of quantum groups: multipliers and intrinsic groups}
\author{Matthew Daws}
\maketitle

\begin{abstract}
We show that the assignment of the (left) completely bounded multiplier algebra
$M_{cb}^l(L^1(\mathbb G))$ to a locally compact quantum group $\mathbb G$, and
the assignment of the intrinsic group, form functors between appropriate
categories.  Morphisms of locally compact quantum
groups can be described by Hopf $*$-homomorphisms between universal
$C^*$-algebras, by bicharacters, or by special sorts of coactions.
We show that the whole
theory of completely bounded multipliers can be lifted to the universal
$C^*$-algebra level, and that then the different pictures of both multipliers
(reduced, universal, and as centralisers)
and morphisms interact in extremely natural ways.  The intrinsic group of a
quantum group can be realised as a class of multipliers, and so our techniques
immediately apply.  We also show how to think of the intrinsic group using
the universal $C^*$-algebra picture, and then, again, show how the differing
views on the intrinsic group interact naturally with morphisms.  We show that
the intrinsic group is the ``maximal classical'' quantum subgroup of a locally
compact quantum group, show that it is even closed in the strong Vaes sense,
and that the intrinsic group functor is an adjoint to the inclusion functor
from locally compact groups to quantum groups.

Keywords: Locally compact quantum group, morphism, intrinsic group, multiplier,
   centraliser.

2010 \emph{Mathematics Subject Classification:}
   Primary 20G42, 46L89; Secondary 22D25, 43A22, 43A35, 43A95, 46L52, 47L25
\end{abstract}

\section{Introduction}

Locally compact quantum groups, \cite{kv}, are now recognised
as the correct way to think about quantum groups from an operator algebraic
viewpoint; or alternatively, as a non-commutative extension of Pontryagin
Duality, building on the theory of Kac Algebras and Multiplicative Unitary
theory.  Thinking from a category theory perspective, it is also extremely
interesting to think about what the morphisms of such objects should be.
The recent paper \cite{mrw}, building on \cite{ng, kusuni}, gives a very satisfactory
answer.

As already seen for a non-amenable group $G$, when forming the group algebra
we can either look at the reduced case $C^*_r(G)$ or the universal case
$C^*(G)$, which are different.  The same applies to a quantum group $\G$ giving,
in general different, algebras $C_0(\G)$ and $C_0^u(\G)$.  Morphisms can either
be expressed as Hopf $*$-homomorphisms, at the universal algebra level, or
through bicharacters, or special types of coaction, at the reduced algebra level
(or equivalently at the von Neumann algebra level, compare
\cite[Section~12]{kusuni}).
This gives further weight to the view that $C_0(\G)$ and $C_0^u(\G)$ are just
different facets of the same ``quantum group'' $\G$.

There has been much interest recently in studying completely bounded multipliers
of $\G$, or more precisely, of the convolution algebra $L^1(\G)$, see
\cite{daws1, daws2, hnr, jnr} for example.
This is an $L^1(\G)$ theory, or equivalently an $L^\infty(\G)$ theory.
In this paper we show how to cast the entire theory using $C_0^u(\G)$,
again showing that the distinction between $C_0(\G)$ and $C_0^u(\G)$ is in
many ways unimportant.  We believe that, even for completely bounded multipliers
of the Fourier algebra, this approach, that is, using $C^*(G)$ and not $VN(G)$,
has not previously been studied.

Using this theory, in terms of universal algebras, leads to a simple way to
study how multipliers and morphisms interact, generalising the study Spronk
undertook of cb multipliers of $A(G)$ in \cite[Section~6.1]{spronk}.
We then find that all the different ``pictures'' of morphisms have a very natural
interpretation with the different aspects of multipliers (essentially, whether
one works with centralisers, multipliers, or multipliers at the $C_0^u(\G)$ level).

The second topic we look at is that of intrinsic groups, \cite{kn, cd}.
We can view the intrinsic group as an assignment, to each
quantum group, of a group.  Following \cite{kn} we can define the intrinsic
group using multipliers, and thus apply our previous study of morphisms to
multipliers.   We use this to show that the intrinsic group is a functor,
and again show that the different pictures of the intrinsic group, and the
different pictures of morphisms, interact very naturally.  Again, here it is
actually most natural to work with $C_0^u(\hh\G)$.  We give the new result that
the intrinsic group of $M(C_0^u(\hh\G))$, with the relative strict topology,
forms another natural equivalent definition of the intrinsic group of $\G$.

Let us say that $\G$ is ``classical'' if $C_0(\G) = C_0(G)$ for a locally
compact group $G$.  We show that the intrinsic group is the maximal
``classical subgroup'' of a
quantum group.  Closed subgroups of quantum groups were explored in \cite{dkss},
with two definitions offered.  We show that the intrinsic group is closed in
the stronger, ``Vaes closed'' sense.  We also recast this maximality property
as a universal property in the category theory sense, and show the the intrinsic
group functor is the adjoint of the inclusion functor from the category of
locally compact groups to the category of locally compact quantum groups.
A corollary of these results is that any classical, closed quantum subgroup is
Vaes closed, a result not available from \cite{dkss}.

Briefly, the organisation of the paper is as follows.  We firstly introduce
locally compact quantum groups, and then summarise the notion of a
``morphism''.  Compared to \cite{mrw} we work with ``left'' multiplicative
unitaries, and also ``reverse the arrows'', to better generalise from classical
group homomorphisms.  As such, we take a little time to give a detailed
summary, in the hope that this will be a useful reference.
In Section~\ref{sec:mults} we introduce centralisers and multipliers, and show
how to work with $C_0^u(\G)$ instead of $L^\infty(\G)$.  We then apply this
theory to show how multipliers and morphisms interact, finishing with some
comments on operator space structures and making links with the results of
\cite{jnr}.  In Section~\ref{sec:ig} we introduce the intrinsic group, again
lift the theory to $C_0^u(\G)$ (or actually the multiplier algebra, with
the strict topology).  This allows us to streamline some of the proofs from
\cite{kn}.  Then in Section~\ref{sec:igf} we show that the intrinsic group is
a functor, show the stated universal property, and then look at closed quantum
subgroups.

\subsection{Acknowledgements}

We would like to thank Matthias Neufang and Sutanu Roy for showing an
interest in this work at an early stage, and for asking about adjoint
functors.  We thank Stefaan Vaes for bringing the appendix of \cite{BV} to
our attention.  We thank the referee for useful comments.

\section{Locally compact quantum groups and their morphisms}

Let us introduce some notation.  We write $\otimes$ for the minimal tensor
product of $C^*$-algebras or the tensor product of Hilbert spaces, and write
$\vnten$ for the normal tensor product of
von Neumann algebras.  For a $C^*$-algebra $A$, we write $M(A)$ for the
multiplier algebra.  We shall use the theory of non-degenerate
$*$-homomorphisms, and their strict extensions, see \cite{lan} or
the appendix of \cite{mnw} for example.  We shall write $\mc B(H)$ for the
bounded linear operators on a Hilbert space $H$, and write $\mc B_0(H)$ for
the compact operators.  We use the standard theory of slice maps, and the
``leg numbering notation''.  We shall use $\sigma$ to denote the tensor
swap map, acting on Hilbert spaces or algebras.

We shall use basic, standard results from the theory of Operator Spaces,
specifically the notion of a Completely Bounded map, see \cite{ER}.

For the theory of locally compact quantum groups, we use \cite{kv,kvvn}.
For more gentle introductions, see \cite{kbook, vaesthesis}, and compare also
\cite{mnw}.  We follow the now
standard notation, and for a locally compact quantum group $\G$ write
$C_0(\G)$ for the $C^*$-algebra representing $\G$, and $L^\infty(\G)$ for the
von Neumann algebra, with $L^1(\G)$ its predual.  Writing $\Delta$ for the
coproduct, we can either view $\Delta$ as a non-degenerate $*$-homomorphism
$C_0(\G) \rightarrow M(C_0(\G)\otimes C_0(\G))$, or as a unital injective
normal $*$-homomorphism $L^\infty(\G) \rightarrow L^\infty(\G) \vnten
L^\infty(\G)$, which thus induces an algebra structure on $L^1(\G)$.
Similarly, $\Delta$ induces an algebra product on $C_0(\G)^*$.  We shall
denote these products by $\star$.

Locally compact quantum groups carry, by definition, invariant weights.
Use the left invariant weight to form the Hilbert space $L^2(\G)$.  Then
there is the fundamental multiplicative unitary $W$ on $L^2(\G)\otimes L^2(\G)$,
which implements the coproduct as
$\Delta(x) = W^*(1\otimes x)W$ for $x\in L^\infty(\G)$.  Then $C_0(\G)$
is weak$^*$-dense in $L^\infty(\G)$ and the inclusion $C_0(\G)\rightarrow
L^\infty(\G)$ repsects all of the associated maps.  One can start at either
the $C^*$-algebra level, \cite{kv}, or at the von Neumann algebra level,
\cite{kvvn}.  The set $\{ (\omega\otimes\id)(W) : \omega\in\mc B(L^2(G))_* \}
\subseteq \mc B(L^2(\G))$
is an algebra, and its closure a $C^*$-algebra, $C_0(\hh\G)$.  We can introduce
a coproduct on $C_0(\hh\G)$ by $\hh\Delta(x) = \hh W^*(1\otimes x)\hh W$,
where $\hh W = \sigma(W)^*$.  Then $C_0(\hh\G)$ can be given invariant weights
so as to become a locally compact quantum group, the weak$^*$-closure is
$L^\infty(\hh\G)$ which is the von Neumann algebraic version of $C_0(\hh\G)$,
and so we obtain $\hh\G$.  Then $W\in M(C_0(\G)\otimes C_0(\hh\G))
\subseteq L^\infty(\G)\vnten L^\infty(\hh\G)$.

The algebras $C_0(\G)$ and $L^\infty(\G)$ admit further maps, such as the
antipode, the unitary antipode, and the scaling group, but these will not play
a prominent role in this paper.

For a classical group $G$, we can form $C_0(G)$ and $C^*_r(G)$.  Alternatively,
we might form the universal group $C^*$-algebra $C^*(G)$.  There is a quantum
group analogue of this, \cite{kusuni}.  We shall write $C_0^u(\G)$ for the
``universal version'' of $C_0(\G)$; it is the enveloping $C^*$-algebra of a
certain $*$-subalgebra of $L^1(\hh\G)$.  We can lift $\Delta$, the antipode,
and so forth, to $C_0^u(\G)$.  We shall continue to write $\Delta$ for the
coproduct on $C_0^u(\G)$.  There is a quotient $*$-homomorphism
$\pi: C_0^u(\G) \rightarrow C_0(\G)$, the ``reducing morphism'', which intertwines
all of the associated maps.  Finally, we can lift $W$ to various universal
versions.  Here we depart from the notation of \cite{kusuni} and write
$\WW\in M(C_0^u(\G)\otimes C_0^u(\hh\G))$ for what is denoted by $\mc U$
in \cite{kusuni}, and then let $\Ww = (\id\otimes\hh\pi)(\WW)$ and
$\wW = (\pi\otimes\id)(\WW)$, which are denoted by $\mc V, \hh{\mc V}$,
respectively, in \cite{kusuni}.  An important result for us is that
\[ (\id\otimes\pi)\Delta(a) = \Ww^*(1\otimes\pi(a))\Ww
\qquad (a\in C_0^u(\G)), \]
see \cite[Proposition~6.2]{kusuni}.

As indicated here, when dealing with objects associated to $\hh\G$, we shall
adorn the object with a hat.  When dealing with more than one quantum group,
we shall adorn the objects with $\G$ or $\H$, and so forth, as appropriate.
For example, $W_\H$ is the fundamental unitary associated with $\H$, and
we have that $\hh W_\G = W_{\hh\G}$.  Note that in the notation of
\cite{kusuni}, we have that $\hh{\mc V}_\G = \sigma(\mc V_{\hh\G})^*$, which
explains our use of different notation.

Finally, we remark that a very similar theory can be built from just working
with special (``manageable'' or ``modular'') multiplicative unitaries, see
\cite{woro, sw}.  All of the results of this paper, excepting
Section~\ref{sec:closed}, hold in this more general setting.

\subsection{Morphisms of quantum groups}

A morphism $\G\rightarrow\H$ can be described in a number of equivalent
ways, \cite{mrw, ng}.  We shall work with ``left'' bicharacters, so as to more
closely match the conventions of Kustermans and Vaes.  We shall also
``reverse the arrows'', as compared to \cite{mrw}, so that a group homomorphism
$G\rightarrow H$ will give rise a morphism $\G\rightarrow\H$ if $C_0(\G) = 
C_0(G)$ and $ C_0(\H) = C_0(H)$ and not the other way around.  As such, we
shall give more detail in this summary than strictly necessary, in the hope it
will be a useful reference.

A morphism $\G\rightarrow\H$ can be equivalently described as:
\begin{enumerate}
\item A non-degenerate $*$-homomorphism $\phi:C_0^u(\H)\rightarrow M(C_0^u(\G))$
which intertwines the coproducts.  That is, a \emph{Hopf $*$-homomorphism}
between universal quantum groups.
\item As a \emph{bicharacter}, which is a unitary $U\in M(C_0(\G)
\otimes C_0(\hh\H))$ which satisfies $(\Delta_{\G}\otimes\id)(U)
= U_{13}U_{23}$ and $(\id\otimes\Delta_{\hh\H})(U) = U_{13}U_{12}$.
\item As a special type of coaction, termed a \emph{left quantum group
homomorphism}, which is a non-degenerate $*$-homomorphism $\beta:
C_0(\H) \rightarrow M(C_0(\G)\otimes C_0(\H))$ which is a coaction of $\G$,
namely $(\Delta_\G\otimes\id)\beta = (\id\otimes\beta)\beta$, and which
also satisfies $(\id\otimes\Delta_\H)\beta = (\beta\otimes\id)\Delta_\H$.
\end{enumerate}
Alternatively, we can express the second two conditions at the von Neumann
algebra level:
\begin{enumerate}
\item $U\in L^\infty(\G)\vnten L^\infty(\hh\H)$ unitary with the same conditions;
\item $\beta:L^\infty(\H) \rightarrow L^\infty(\G)\vnten L^\infty(\H)$
a normal unital $*$-homomorphism, with the same conditions.
\end{enumerate}
This can be shown by adapting the proofs of \cite{mrw}; compare also the
different proofs in \cite[Section~12]{kusuni}.  These different notions are
linked by the following properties:
\begin{enumerate}
\item Given $\phi$, we have that $U = (\pi_\G\phi\otimes\id)(\Ww_\H)$
and $\beta$ is the unique $*$-homomorphism which satisfies that
$\beta\pi_\H = (\pi_\G\phi\otimes\pi_\H)\Delta^u_\H$.
\item Given $U$, we define $\beta$ by
$\beta(x) = U^*(1\otimes x)U$ for $x\in C_0(\H)$ or $L^\infty(\H)$.
We can always ``lift'' $U$ to a bicharacter in $M(C_0^u(\G)\otimes C_0^u(\hh\H))$
and then the results of \cite[Section~6]{kusuni} readily give a unique
$\phi$ with $U = (\pi_\G\phi\otimes\id)(\Ww_\H)$.
\item Given $\beta$, there is a unique unitary $U$ with
$(\beta\otimes\id)(\ww_\H)\ww_{\H,23}^* = U_{13}$, and $U$ is a bicharacter.
From $U$ we obtain $\phi$, but there appears to be no simple, direct way to
move from $\beta$ to $\phi$.
\end{enumerate}

We remark that we also have the notion of a \emph{right quantum group
homomorphism}, but one can easily move between the left and right cases by
using the unitary antipodes of $\G$ and $\H$.  Furthermore, \cite{mrw} also
explores a fourth equivalent notion, namely that of a certain class of functors
between the categories of $C^*$-algebras with $C_0(\G)$-coactions and
$C_0(\H)$-coactions.

The identity morphism $\G\rightarrow\G$ is associated to the
bicharacter $W_\G$ and the quantum group homomorphism $\Delta_\G$.
We also remark that (left) quantum group homomorphisms $\beta$ are also
injective, and ``continuous'', meaning that the linear span of
$\beta(C_0(\H))(C_0(\G)\otimes 1)$ is contained in, and dense in,
$C_0(\G)\otimes C_0(\H)$.

Given a morphism $f:\G\rightarrow\H$, write $\phi_f, U_f, \beta_f$
for the associated objects above.  Given $f:\G\rightarrow\H$ and
$g:\H\rightarrow\K$, the composition $gf:\G\rightarrow\K$ is associated to:
\begin{itemize}
\item $\phi:C_0^u(\K)\rightarrow M(C_0^u(\G))$ given by $\phi = \phi_f\circ
\phi_g$ (the usual composition of non-degenerate $*$-homomorphisms).
\item $U\in M(C_0(\G)\otimes C_0(\hh\K))$ which is the unique unitary
satisfying $U_{g,23} U_{f,12} = U_{f,12} U_{13} U_{g,23}$ in
$\mc B(L^2(\G)\otimes L^2(\H)\otimes L^2(\K))$. 
\item $\beta:C_0(\K)\rightarrow M(C_0(\G)\otimes C_0(\K))$ which is the
unique non-degenerate $*$-homomorphism satisfying that
$(\beta_f\otimes\id)\beta_g = (\id\otimes\beta_g)\beta$.
\end{itemize}
We remark that it is possible to prove the existence of $U$ just using
$U_f,U_g$, and similarly construct $\beta$ just using $\beta_f,\beta_g$,
without having to pass through the various equivalences.

Finally, we mention duality.  For any morphism $\G\rightarrow\H$, there is
a ``dual morphism'' $\H\rightarrow\G$.  Writing $\phi,U$ for
$\G\rightarrow\H$ and $\hh\phi, \hh U$ for $\H\rightarrow\G$, we have
that:
\begin{itemize}
\item $\hh U = \sigma(U^*)$ where again $\sigma$ is the swap map;
\item $\phi$ and $\hh\phi$ are linked by the relation that
$(\phi\otimes\id)(\WW_\H) = (\id\otimes\hh\phi)(\WW_\G)$.
\end{itemize}
We remark that there appears to be no direct way to express the duality relation
at the level of coactions.  This is perhaps not surprising, as duality is
governed by the fundamental unitary, which is reflected in the bicharacter
picture, and of course used directly to relate $\phi$ and $\hh\phi$.

\section{Multipliers and morphisms}\label{sec:mults}

Completely bounded multipliers of locally compact quantum groups have been
studied in detail in \cite{jnr, hnr} and by the author in \cite{daws1, daws2}.
To be careful, we shall follow the notation of \cite{jnr}, but the
$C^*$-algebraic approach of \cite{daws2} will pay off here.

Fix a locally compact quantum group $\G$.  A \emph{left centraliser} of
$L^1(\G)$ is a right module map, that is, $L_*:L^1(\G)\rightarrow L^1(\G)$
with $L_*(\omega_1\star\omega_2) = L_*(\omega_1)\star\omega_2$.
Let $C^l_{cb}(L^1(\G))$ be the space of all completely bounded left
centralisers.  For $L_* \in C^l_{cb}(L^1(\G))$ let $L = (L_*)^* \in
\mc{CB}(L^\infty(\G))$ be the adjoint; that $L_*$ is a left centraliser is
then equivalent to $L$ being \emph{left covariant}, $(L\otimes\id)\Delta
= \Delta L$.

Recall the \emph{left regular representation} $\lambda:L^1(\G)\rightarrow
C_0(\hh\G)$, an injective algebra homomorphism defined by $\lambda(\omega)
= (\omega\otimes\id)(W)$.  We say that
$x\in L^\infty(\hh\G)$ is a \emph{left completely bounded multiplier} if
$x\lambda(\omega) \in \lambda(L^1(\G))$ for each $\omega\in L^1(\G)$, and the
resulting map $L^1(\G)\rightarrow L^1(\G)$ is completely bounded.  Denote this
by $x\in M_{cb}^l(L^1(\G))$.  By definition, $M_{cb}^l(L^1(\G)) \subseteq
C^l_{cb}(L^1(\G))$, and a major result of \cite{jnr}, see Corollary~4.4 of that
paper, is that every $L_*\in C^l_{cb}(L^1(\G))$ arises in this way.  We finessed 
this result in \cite[Theorem~4.2]{daws2}, showing that actually $M_{cb}^l(L^1(\G))
\subseteq M(C_0(\G))$; compare the self-contained approach of
\cite[Proposition~3.1]{daws1}.
We shall say that $x\in M_{cb}^l(L^1(\G))$ is ``associated to'' $L_*\in
C_{cb}^l(L^1(\G))$.  Notice that clearly $M_{cb}^l(L^1(\G))$ and $C_{cb}^l(L^1(\G))$
are isomorphic \emph{algebras} under this identification.

In this paper we shall work with left multipliers, but it is easy to see that
analogous results hold for right multipliers, either compare \cite{jnr,daws2},
or just work with the \emph{opposite quantum group}, to use the terminology
of \cite[Section~4]{kvvn}.
  We remark that double multipliers seem somewhat less well
understood, compare \cite[Section~7]{daws2}.

\subsection{Moving to the universal level}

In this section, we prove analogous results about the interaction of
$C_{cb}^l(L^1(G))$ and $C_0^u(\hh\G)$, in place of $C_0(\hh\G)$.  This then
allows us to study how morphisms and multipliers interact, as morphisms are
most naturally studied at the level of Hopf $*$-homomorphisms between universal
$C^*$-algebraic quantum groups.  The techniques here are inspired by
\cite{daws2}, though here we shall not work with Hilbert $C^*$-modules (but
one could construct analogous proofs using this machinery).

Let $L_*\in C^l_{cb}(L^1(\G))$.  We wish to consider $(L\otimes\id)(\wW)$,
but we need to make sense of this.  Let $C_0^u(\hh\G)$ be faithfully and
non-degenerated represented on a Hilbert space $K$, so we may identify
$M(C_0^u(\hh\G))$ with those $x\in\mc B(K)$ which multiply $C_0^u(\hh\G)$ into
itself; then $x\in C_0^u(\hh\G)''$.  Similarly, $M(C_0(\G)\otimes C_0^u(\hh\G))
\subseteq L^\infty(\G) \vnten C_0^u(\hh\G)'' \subseteq \mc B(L^2(\G)\otimes K)$.
Then $(L\otimes\id)(\wW) \in L^\infty(\G) \vnten C_0^u(\hh\G)''$ is well-defined.

\begin{lemma}\label{lem:in_mult_alg}
With the notation above, we have that $(L\otimes\id)(\wW) \in M(\mc B_0(L^2(\G))
\otimes C_0^u(\hh\G))$.
\end{lemma}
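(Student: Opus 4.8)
The plan is to exploit the covariance of $L$ together with the known multiplicative-unitary relation $\wW = (\pi\otimes\id)(\WW)$ and the pentagonal/defining identities for $\WW$, rather than to estimate norms directly. First I would recall that $(L_*)^* = L$ is left covariant, $(L\otimes\id)\Delta = \Delta L$, and that at the reduced level this means $x := (L\otimes\id)(W) \in M_{cb}^l(L^1(\G)) \subseteq M(C_0(\G))$ by \cite[Theorem~4.2]{daws2}; more precisely, the multiplier $x$ acts as $x\lambda(\omega) = \lambda(L_*(\omega))$. The object we want to control, $(L\otimes\id)(\wW)$, lives in $L^\infty(\G)\vnten C_0^u(\hh\G)''$, and the task is to show it multiplies $\mc B_0(L^2(\G))\otimes C_0^u(\hh\G)$ into itself from both sides.

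The key step is to find a workable formula for $(L\otimes\id)(\wW)$. Since $\wW = (\pi\otimes\id)(\WW) \in M(C_0(\G)\otimes C_0^u(\hh\G))$, I would slice $\WW$ against functionals: for $\hh\omega \in C_0^u(\hh\G)^*$ one has $(\id\otimes\hh\omega)(\wW) \in M(C_0(\G))$, and applying $L$ and reassembling should give that $(L\otimes\id)(\wW)$ is determined by the maps $\hh\omega \mapsto L\big((\id\otimes\hh\omega)(\wW)\big)$. Then I would use that, by \cite[Proposition~6.2]{kusuni}, $(\id\otimes\pi)\Delta(a) = \Ww^*(1\otimes\pi(a))\Ww$, together with the covariance of $L$ applied in the appropriate leg, to rewrite the relevant products. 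Concretely, I expect an identity of the shape
\[
(L\otimes\id)(\wW)\,(\xi\otimes\eta)
\]
is computed by letting $x=(L\otimes\id)(W)\in M(C_0(\G))$ act, using a leg-numbering manipulation of $\WW_{12}\WW_{13}\cdots$ type relations, so that multiplying $(L\otimes\id)(\wW)$ on one side by an element of $\mc B_0(L^2(\G))\otimes C_0^u(\hh\G)$ reduces to multiplying $\wW$ there (which already lands in $M(\mc B_0(L^2(\G))\otimes C_0^u(\hh\G))$ since $\wW\in M(C_0(\G)\otimes C_0^u(\hh\G))$ and $C_0(\G)\mc B_0(L^2(\G))\subseteq \mc B_0(L^2(\G))$, using $C_0(\G)\subseteq \mc B(L^2(\G))$ nondegenerately) and then applying the bounded map $L$ in the first leg, which preserves $\mc B_0(L^2(\G))$ because $L = (L_*)^*$ is the adjoint of a map on $L^1(\G)\subseteq \mc B_0(L^2(\G))^* $ — more carefully, one checks $(L\otimes\id)$ maps $\mc B_0(L^2(\G))\vnten C_0^u(\hh\G)$ into itself, or at least that the relevant product stays in the compacts.

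The main obstacle, I expect, is precisely making sense of ``$L$ preserves the compacts in the first leg'': $L$ is a priori only a weak$^*$-continuous completely bounded map on $L^\infty(\G)$, so $(L\otimes\id)$ need not obviously map $\mc B_0(L^2(\G))\otimes C_0^u(\hh\G)$ anywhere nice. The way around this is not to apply $L\otimes\id$ to a compact element directly, but to use the multiplier description: for $T\in\mc B_0(L^2(\G))$ and $b\in C_0^u(\hh\G)$, write $T\otimes b$ approximated using $\lambda(L^1(\G))$-type elements and the density of $\lin\{(\omega\otimes\id)(W)\}$, then use $x\lambda(\omega)=\lambda(L_*(\omega))\in\lambda(L^1(\G))\subseteq C_0(\hh\G)$ to keep everything inside the $C^*$-algebras, and finally invoke that $\lin\{ (\id\otimes\hh\omega)(\wW) : \hh\omega\}$ is dense in $C_0(\G)$ so that the first leg genuinely stays in $\mc B_0(L^2(\G))$ after multiplication. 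Once the right-hand multiplication $\big((L\otimes\id)(\wW)\big)\big(\mc B_0(L^2(\G))\otimes C_0^u(\hh\G)\big) \subseteq \mc B_0(L^2(\G))\otimes C_0^u(\hh\G)$ is established, the left-hand version follows by a symmetric argument (or by taking adjoints, noting $\wW^*$ satisfies analogous relations, and that $L$ applied to an adjoint is handled by the same module-map bookkeeping), and together these give $(L\otimes\id)(\wW)\in M(\mc B_0(L^2(\G))\otimes C_0^u(\hh\G))$.
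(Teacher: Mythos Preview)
Your proposal has a genuine gap. You correctly identify the obstacle: the map $L$ is only a normal completely bounded map on $L^\infty(\G)$, so there is no reason a priori for $(L\otimes\id)(\wW)$ to multiply $\mc B_0(L^2(\G))\otimes C_0^u(\hh\G)$ into itself. But your proposed workaround does not resolve this. You suggest approximating $T\otimes b$ (with $T\in\mc B_0(L^2(\G))$) using ``$\lambda(L^1(\G))$-type elements'' and the density of $\{(\omega\otimes\id)(W)\}$, but both of these lie in $C_0(\hh\G)$, not in $\mc B_0(L^2(\G))$; there is a confusion here between the first leg (compact operators on $L^2(\G)$) and the second leg (the dual algebra). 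The identity $x\lambda(\omega)=\lambda(L_*(\omega))$ lives at the \emph{reduced} level in $C_0(\hh\G)$, and lifting it to $C_0^u(\hh\G)$ is precisely the content of Theorem~\ref{thm:cent_is_mult}, which \emph{uses} the present lemma; so invoking it here would be circular. Likewise the covariance relation $(L\otimes\id)\Delta=\Delta L$ and the formula $(\id\otimes\pi)\Delta(a)=\Ww^*(1\otimes\pi(a))\Ww$ concern $C_0^u(\G)$, not $C_0^u(\hh\G)$, and do not by themselves produce a multiplier statement for $(L\otimes\id)(\wW)$.

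The paper's proof takes a completely different route that avoids all of this: it uses the structure theorem for normal completely bounded maps to write $L(a)=S^*(a\otimes 1)T$ for bounded operators $S,T:L^2(\G)\to L^2(\G)\otimes H$. This gives the concrete identity $(L\otimes\id)(\wW)(\theta\otimes a)=(S\otimes 1)^*\wW_{13}(T\theta\otimes a)$, and now the compactness of $\theta$ makes $T\theta$ compact, so one is reduced to the known fact that $\wW\in M(\mc B_0(L^2(\G))\otimes C_0^u(\hh\G))$. The missing idea in your approach is exactly this Stinespring/Kraus-type factorisation of $L$, which replaces the abstract map $L$ by honest operator multiplication and lets the multiplier property of $\wW$ do the work directly.
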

\begin{proof}
By the structure of \emph{normal} completely bounded maps and the structure
theory of normal $*$-homomorphisms, we can find a Hilbert space $H$ and
$S,T\in\mc B(L^2(\G),L^2(\G)\otimes H)$ with $\|S\| \|T\| = \|L\|_{cb}$ and
with $L(x) = S^*(x\otimes 1)T$ for all $x\in L^\infty(\G)$.  See, for example,
the discussion in \cite[Section~3]{daws1} or the proof of
\cite[Theorem~4.2]{daws2}.

Let $\theta\in\mc B_0(L^2(\G))$ and $a\in C_0^u(\hh\G)$.  Let $U:L^2(\G)
\rightarrow L^2(\G)\otimes H$ be some isometry, and set $R = T\theta U^*$ so
that $RU=T\theta$.  Then, working in
$L^\infty(\G) \vnten C_0^u(\hh\G)'' \subseteq \mc B(L^2(\G)\otimes K)$,
\begin{align*}
(L\otimes\id)(\wW) (\theta\otimes a)
&= (S\otimes 1)^* \wW_{13} (T\theta\otimes a)
= (S\otimes 1)^* \wW_{13} (R\otimes a)(U\otimes 1).
\end{align*}
Now, $R \in \mc B_0(L^2(\G)\otimes H)$ because $\theta$ is compact.
As $\wW\in M(\mc B_0(L^2(\G)) \otimes C_0^u(\hh\G))$, and using that
$\mc B_0(L^2(\G)\otimes H) = \mc B_0(L^2(\G)) \otimes \mc B_0(H)$,
it follows that
\[ \wW_{13} (R\otimes a) \in \mc B_0(L^2(\G)\otimes H) \otimes C_0^u(\hh\G). \]
Hence $(L\otimes\id)(\wW) (\theta\otimes a) \in
\mc B_0(L^2(\G)) \otimes C_0^u(\hh\G)$.  Analogously we can show this with
the order of products reversed, and so $(L\otimes\id)(\wW) \in M(\mc B_0(L^2(\G))
\otimes C_0^u(\hh\G))$ as claimed.
\end{proof}

We shall use the ``invariants are constant'' idea from \cite{mrw}, compare
\cite[Theorem~2.1]{daws1} and \cite[Lemma~4.6]{aristov}: if $x,y \in
L^\infty(\G)$ with $\Delta(x) = y\otimes 1$ then $x=y\in\mathbb C1$.
In the following, $\lambda_u:L^1(\G)\rightarrow C_0^u(\hh\G)$ is the
``universal left-regular representation'',
$\omega\mapsto (\omega\otimes\id)(\wW)$.

\begin{theorem}\label{thm:cent_is_mult}
Let $L_* \in C_{cb}^l(L^1(\G))$.  There exists $x\in M(C_0^u(\hh\G))$
with $x \lambda_u(\omega) = \lambda_u(L_*(\omega))$ for $\omega\in L^1(\G)$,
or equivalently, $(L\otimes\id)(\wW) = (1\otimes x)\wW$.

Furthermore, if $L_*\in\mc{CB}(L^1(\G))$ is any completely bounded map
and $x\in M(C_0^u(\hh\G))$ any element, such
that $(L\otimes\id)(\wW) = (1\otimes x)\wW$, then $L_*$ is a centraliser,
associated with $\pi_{\hh\G}(x) \in M_{cb}^l(L^1(\G))$.
\end{theorem}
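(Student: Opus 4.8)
The plan is to establish the first assertion by an ``invariants are constant'' argument, and then use the computation in reverse to get the converse. For the first part, start from $Y := (L\otimes\id)(\wW)$, which by Lemma~\ref{lem:in_mult_alg} lies in $M(\mc B_0(L^2(\G))\otimes C_0^u(\hh\G))$. The key is to compute how $Y$ interacts with the coproduct $\Delta_\G$ in the first leg. Since $L$ is left covariant, $(L\otimes\id)\Delta_\G = \Delta_\G L$; combining this with the fact that $\wW$ implements (a version of) the coproduct --- concretely using the relation $(\id\otimes\pi)\Delta(a) = \Ww^*(1\otimes\pi(a))\Ww$ recalled from \cite[Proposition~6.2]{kusuni}, together with the pentagon-type identities satisfied by $\WW, \Ww, \wW$ --- one derives an identity of the form $(\Delta_\G\otimes\id)(Y) = \wW_{23}^* Y_{13} \wW_{23}$ or equivalently $\wW_{13}^* (\Delta_\G\otimes\id)(Y)\wW_{13} = $ something lying in a ``constant'' position in the first two legs. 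Feeding this into the slice-map characterisation: applying $\omega\otimes\id\otimes\id$ and using $\lambda$, $\lambda_u$, one obtains that the map $z\mapsto (\id\otimes\omega)\big(\wW^*(z\otimes1)\wW\big)$ applied to $Y$ produces an element whose $\Delta_\G$-image factors through $1$. The ``invariants are constant'' principle then forces $\wW Y \wW^{-1}$-type expression to have a trivial first leg, i.e. there is $x\in\mc B(K)$ with $(L\otimes\id)(\wW) = (1\otimes x)\wW$. That $x\in M(C_0^u(\hh\G))$ then follows because $(1\otimes x)\wW = Y \in M(\mc B_0(L^2(\G))\otimes C_0^u(\hh\G))$ and $\wW$ is a unitary multiplier of the same algebra, so $1\otimes x$ multiplies $\mc B_0(L^2(\G))\otimes C_0^u(\hh\G)$ into itself, which (slicing the first leg) gives $x\, C_0^u(\hh\G)\subseteq C_0^u(\hh\G)$ and similarly on the other side. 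The equivalent formulation $x\lambda_u(\omega)=\lambda_u(L_*(\omega))$ is then just slicing $(1\otimes x)\wW = (L\otimes\id)(\wW)$ in the first leg against $\omega$, using $L = (L_*)^*$.

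For the converse, suppose $L_*\in\mc{CB}(L^1(\G))$ and $x\in M(C_0^u(\hh\G))$ satisfy $(L\otimes\id)(\wW) = (1\otimes x)\wW$. Slicing the first leg against $\omega\in L^1(\G)$ gives $\lambda_u(L_*(\omega)) = x\lambda_u(\omega)$ for all $\omega$. To see $L_*$ is a left centraliser, apply $\pi_{\hh\G}$: since $\pi_{\hh\G}\circ\lambda_u = \lambda$ (the universal left regular representation reduces to the ordinary one), we get $\lambda(L_*(\omega)) = \pi_{\hh\G}(x)\lambda(\omega)$ for all $\omega$. Now $\lambda$ is an injective algebra homomorphism from $(L^1(\G),\star)$ into $C_0(\hh\G)$, so for $\omega_1,\omega_2\in L^1(\G)$,
\[
\lambda\big(L_*(\omega_1\star\omega_2)\big) = \pi_{\hh\G}(x)\lambda(\omega_1)\lambda(\omega_2) = \lambda(L_*(\omega_1))\lambda(\omega_2) = \lambda\big(L_*(\omega_1)\star\omega_2\big),
\]
and injectivity of $\lambda$ yields $L_*(\omega_1\star\omega_2) = L_*(\omega_1)\star\omega_2$. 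Thus $L_*\in C^l_{cb}(L^1(\G))$, and the relation $\lambda(L_*(\omega)) = \pi_{\hh\G}(x)\lambda(\omega)$ says precisely that $\pi_{\hh\G}(x)\in M^l_{cb}(L^1(\G))$ is the multiplier associated to $L_*$ in the sense recalled before the theorem.

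I expect the main obstacle to be the first direction, specifically pinning down the correct pentagon/coproduct identity relating $(\Delta_\G\otimes\id)(\wW)$ to leg-numbered products of $\wW$ at the \emph{universal} level (the $\wW = (\pi\otimes\id)(\WW)$ picture), so that the ``invariants are constant'' lemma can be applied cleanly; one has to be careful that the relevant slices land in $L^\infty(\G)$ (not merely $\mc B(L^2(\G))$) so that the lemma genuinely applies, and that the resulting $x$ is a priori only a bounded operator on $K$ until the multiplier-algebra bookkeeping with Lemma~\ref{lem:in_mult_alg} upgrades it. The converse, by contrast, is essentially formal once one has $\pi_{\hh\G}\circ\lambda_u = \lambda$ and the injectivity/multiplicativity of $\lambda$.
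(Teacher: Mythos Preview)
Your overall strategy matches the paper's: use ``invariants are constant'' for the forward direction, and slice plus injectivity/multiplicativity of the (universal) left regular representation for the converse. Your treatment of the converse is correct and in fact more explicit than the paper's (which simply says ``as $\lambda_u$ is injective, $L_*$ is a centraliser'', leaving the reader to supply the multiplicativity argument you wrote out).

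The gap is in the forward direction: you work with $Y = (L\otimes\id)(\wW)$ and propose an identity like $(\Delta_\G\otimes\id)(Y) = \wW_{23}^* Y_{13}\wW_{23}$, but this is not what one gets. Using $(\Delta\otimes\id)(\wW) = \wW_{13}\wW_{23}$ and covariance of $L$, the actual computation gives
\[
(\Delta\otimes\id)(Y) = ((L\otimes\id)\Delta\otimes\id)(\wW)
= (L\otimes\id\otimes\id)(\wW_{13}\wW_{23}) = Y_{13}\,\wW_{23},
\]
which is not directly of ``invariants are constant'' shape. The clean fix, and what the paper does, is to set $X := (L\otimes\id)(\wW)\,\wW^* = Y\wW^*$. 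Then
\[
(\Delta\otimes\id)(X) = Y_{13}\wW_{23}\,(\wW_{13}\wW_{23})^* = Y_{13}\wW_{13}^* = X_{13},
\]
so slicing on the third leg gives elements $a\in L^\infty(\G)$ with $\Delta(a) = 1\otimes a$, whence $a\in\mathbb C 1$. Since Lemma~\ref{lem:in_mult_alg} gives $X\in M(\mc B_0(L^2(\G))\otimes C_0^u(\hh\G))$, one obtains $X = 1\otimes x$ with $x\in M(C_0^u(\hh\G))$ immediately, without the separate multiplier-algebra bookkeeping you sketched (though your argument for that step would also work). So the only missing idea is to right-multiply by $\wW^*$ before applying $\Delta\otimes\id$.
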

\begin{proof}
Again working in $L^\infty(\G) \vnten C_0^u(\hh\G)'' \subseteq
\mc B(L^2(G)\otimes K)$, we set $X = (L\otimes\id)(\wW) \wW^* \in
L^\infty(\G) \vnten C_0^u(\hh\G)''$.  By the previous lemma, we know that
$X\in M(\mc B_0(L^2(\G)) \otimes C_0^u(\hh\G))$.  We calculate that
\begin{align*} (\Delta\otimes\id)(X)
&= (\Delta L\otimes\id)(\wW) (\wW_{13} \wW_{23})^*
= ((L\otimes\id)\Delta\otimes\id)(\wW) \wW_{23}^* \wW_{13}^* \\
&= (L\otimes\id\otimes\id)(\wW_{13} \wW_{23}) \wW_{23}^* \wW_{13}^*
= ( (L\otimes\id)(\wW) \wW^*)_{13} = X_{13}. \end{align*}
By slicing on the right, and using that invariants are constant, we see that
$X \in \mathbb C 1 \vnten M(C_0^u(\hh\G))$.  Thus, there is $x\in M(C_0^u(\hh\G))$
with $X = 1\otimes x$, or equivalently,
\[ (L\otimes\id)(\wW) = (1\otimes x)\wW. \]

Now suppose that all we know about $L$ is that $(L\otimes\id)(\wW)
= (1\otimes x)\wW$.  Then
\[ \lambda_u(L_*(\omega)) = (\omega\otimes\id)((L\otimes\id)(\wW))
= (\omega\otimes\id)((1\otimes x)\wW)
= x \lambda_u(\omega), \]
and so, as $\lambda_u$ is injective, $L_*$ is a centraliser.
As $\pi_{\hh\G} \lambda_u = \lambda$, it follows
that $\pi_{\hh\G}(x) \in M_{cb}^l(L^1(\G))$ is the multiplier given by $L_*$.
\end{proof}

\begin{definition}
Let the collection of those $x\in M(C_0^u(\hh\G))$ associated to completely
bounded left centralisers be denoted by $M_{cb}^{l,u}(L^1(\G))$.
\end{definition}

We note that (the strict extension of) $\pi_{\hh\G}$ restricts to a bijection
$M_{cb}^{l,u}(L^1(\G)) \rightarrow M_{cb}^{l}(L^1(\G))$.

While discussing multiplier algebras, we make the following remark.
Let $L_*\in C^l_{cb}(L^1(\G))$.
As elements of the form $(\id\otimes\omega)(W)$ are norm dense
in $C_0(\G)$, and as $(L\otimes\id)(W) = (1\otimes x)W$, it follows
that $L$ restricts to a map $C_0(\G)\rightarrow C_0(\G)$.  We remark that we
don't know if $L$ gives a map $M(C_0(\G))\rightarrow M(C_0(\G))$.

We next adapt an idea from the proof of \cite[Theorem~4.2]{daws2}, which will
allow us to find a ``universal $C^*$-algebraic'' version of the representation
theorem of \cite{jnr}.  Firstly, we recall some notions from, for example,
\cite[Section~5.5]{kv}.  For a $C^*$-algebra $A$ and an index set $I$, let
$MC_I(A)$ be the families $(x_i)_{i\in I} \subseteq M(A)$ with
$\sum_i x_i^*x_i$ strictly converging in $M(A)$.  Similarly let $MR_I(A)$ be
those $(x_i)$ with $\sum_i x_ix_i^*$ strictly converging, so $(x_i)\in MC_I(A)$
if and only if $(x_i^*)\in MR_I(A)$.  For $x,y\in MC_I(A)$,
\cite[Lemma~5.28]{kv} shows that $\sum_i x_i^*y_i$ is strictly convergent and
that the partial sums form a bounded family.  Furthermore, these notions are
stable under applying non-degenerate $*$-homomorphisms.  These notions have
obvious links with the (extended) Haagerup tensor product, see for example
\cite[Proposition~1.15]{vvd}.

\begin{theorem}\label{thm:mci_form}
For $x\in M_{cb}^{l,u}(L^1(\G))$, there exists $(a_i),(b_i)\in MC_I(C_0^u(\hh\G))$
with $\sum_i (1\otimes b_i^*)\Delta(a_i) = x\otimes 1$.
\end{theorem}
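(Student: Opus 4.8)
The plan is to adapt the Hilbert-space dilation argument of \cite[Theorem~4.2]{daws2} to the universal level, using the decomposition $(L\otimes\id)(\wW) = (1\otimes x)\wW$ from Theorem~\ref{thm:cent_is_mult} together with the Stinespring-type factorization $L(y) = S^*(y\otimes 1)T$ from Lemma~\ref{lem:in_mult_alg}. First I would pick an orthonormal basis $(e_i)_{i\in I}$ of the auxiliary Hilbert space $H$ and use it to ``unpack'' the maps $S,T\in\mc B(L^2(\G),L^2(\G)\otimes H)$ into families of operators: writing $S_i = (1\otimes e_i^*)S$ and $T_i = (1\otimes e_i^*)T$ in $\mc B(L^2(\G))$, one has $L(y) = \sum_i S_i^* y\, T_i$ with the sums converging appropriately, and $\sum_i S_i^*S_i = S^*S$, $\sum_i T_i^*T_i = T^*T$ bounded. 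The point of Lemma~\ref{lem:in_mult_alg} is precisely that $(L\otimes\id)(\wW)$ lives in $M(\mc B_0(L^2(\G))\otimes C_0^u(\hh\G))$, which is what will let us produce genuine multiplier-algebra elements rather than just elements of $\mc B(L^2(\G))\vnten C_0^u(\hh\G)''$.

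Next I would feed these into the identity $(L\otimes\id)(\wW) = (1\otimes x)\wW$. Using $(L\otimes\id)(\wW) = (S\otimes 1)^*\wW_{13}(T\otimes 1)$ (the leg-$3$ copy being $C_0^u(\hh\G)$, leg $2$ being the $H$-slot), slicing against the basis vectors in the $H$-leg gives, formally, $\sum_i (S_i^*\otimes 1)\,\wW\,(T_i\otimes 1) = (1\otimes x)\wW$, hence $\sum_i (S_i^*\otimes 1)\,\wW\,(T_i\otimes 1)\,\wW^* = 1\otimes x$. Now I would recognise $\wW(T_i\otimes 1)\wW^*$ and $\wW^*(S_i\otimes 1)\wW$ in terms of the coproduct: since $(\id\otimes\pi_{\hh\G})$ of these expressions are $\Delta$-conjugations by $\wW$, and recalling $(\id\otimes\pi)\Delta(a) = \Ww^*(1\otimes\pi(a))\Ww$ from \cite[Proposition~6.2]{kusuni} (with the analogous $\wW$-version), the elements $\wW^*(a\otimes 1)\wW$ for $a\in C_0^u(\hh\G)$ are a universal-level $\Delta$. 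One then expands $\wW$ itself as an $MC_I$-style sum: because $\wW\in M(\mc B_0(L^2(\G))\otimes C_0^u(\hh\G))$, writing $W = (\omega\otimes\id)$-slices and using that $C_0(\G) = \overline{\{(\id\otimes\omega)(W)\}}$, one can realise the relevant products $(1\otimes b_i^*)\Delta(a_i)$ by choosing $a_i, b_i$ built from slices of $\wW$ paired with the $T_i, S_i$. The cleanest route is: set $a_i$ and $b_i$ to be the leg-$2$ (i.e. $C_0^u(\hh\G)$) components obtained by slicing $\wW(T_i\otimes1)$ and $\wW(S_i\otimes1)$ respectively against a basis of $L^2(\G)$ in the first leg, verify $(a_i),(b_i)\in MC_I(C_0^u(\hh\G))$ using $\|S\|\|T\| = \|L\|_{cb}$ and \cite[Lemma~5.28]{kv}, and then check $\sum_i (1\otimes b_i^*)\Delta(a_i) = x\otimes 1$ by testing both sides against $\lambda_u(\omega)$ in a further leg, reducing to the already-established $x\lambda_u(\omega) = \lambda_u(L_*(\omega))$.

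The main obstacle I expect is bookkeeping: making the informal ``slice against a basis'' manipulations rigorous, i.e. showing the doubly-indexed families genuinely lie in $MC_I(C_0^u(\hh\G))$ and that the rearranged sums converge strictly to the claimed products. This is where Lemma~\ref{lem:in_mult_alg} does the real work, since it guarantees the relevant operators sit in the multiplier algebra $M(\mc B_0(L^2(\G))\otimes C_0^u(\hh\G))$ rather than merely in the von Neumann algebra envelope, so that slicing on the $\mc B_0(L^2(\G))$ leg keeps everything inside $M(C_0^u(\hh\G))$; the stability of $MC_I$ under non-degenerate $*$-homomorphisms and the norm bound $\sum S_i^*S_i \le \|L\|_{cb}$, $\sum T_i^*T_i \le \|L\|_{cb}$ then give the column/row summability. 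The equality $\sum_i (1\otimes b_i^*)\Delta(a_i) = x\otimes 1$ itself, once convergence is in hand, should follow by a short slice-map computation from Theorem~\ref{thm:cent_is_mult} and the definition of $\lambda_u$, with no further input needed.
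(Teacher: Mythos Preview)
Your overall strategy---start from the Stinespring factorisation $L(y)=S^*(y\otimes 1)T$, slice against a basis to manufacture the families $(a_i),(b_i)$, and do the real work in proving strict convergence---is exactly the paper's. You are also right that Lemma~\ref{lem:in_mult_alg} is what keeps the slices inside $M(C_0^u(\hh\G))$.

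Where your proposal goes wrong is the step ``recognise $\wW(T_i\otimes 1)\wW^*$ in terms of the coproduct''. Here $T_i\in\mc B(L^2(\G))$ sits on the \emph{first} leg of $\wW\in M(C_0(\G)\otimes C_0^u(\hh\G))$, whereas the coproduct formula $(\id\otimes\pi)\Delta(a)=\Ww^*(1\otimes\pi(a))\Ww$ you cite has the argument on the \emph{second} leg, and requires $a\in C_0^u(\hh\G)$. Your $T_i$ satisfies neither condition, so $\wW(T_i\otimes 1)\wW^*$ is not a coproduct of anything; consequently there is no clean formula for $\Delta(a_{i,j})$ with your choice of $a_{i,j}$, and the proposed verification ``by testing against $\lambda_u(\omega)$'' does not obviously reduce to anything already known.

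The paper fixes this by a different ansatz for $a_i,b_i$: it takes a unit vector $\xi\in L^2(\G)$, an orthonormal basis $(e_i)$ of $L^2(\G)\otimes H$, and sets
\[ a_i=(\id\otimes\omega_{\xi,e_i})\big(\hh\Ww^*_{13}(1\otimes T)\hh\Ww\big),\qquad
   b_i=(\id\otimes\omega_{\xi,e_i})\big(\hh\Ww^*_{13}(1\otimes S)\hh\Ww\big), \]
working with $\hh\Ww\in M(C_0^u(\hh\G)\otimes\mc B_0(L^2(\G)))$. The point of the \emph{conjugation} by $\hh\Ww$ (rather than a one-sided product) is that applying $\hh\Delta$ to the first leg interacts cleanly with the corepresentation identity for $\hh\Ww$, so $\sum_i(1\otimes b_i^*)\Delta(a_i)$ telescopes, after a direct calculation, to
\[ (\id\otimes\id\otimes\omega_{\xi,\xi})\big(\hh\Ww^*_{23}\,(\id\otimes L)(\hh\Ww^*)_{13}\,\hh\Ww_{13}\hh\Ww_{23}\big). \]
Since $\hh\Ww^*=\sigma(\wW)$, Theorem~\ref{thm:cent_is_mult} gives $(\id\otimes L)(\hh\Ww^*)=(x\otimes 1)\hh\Ww^*$, and the expression collapses to $x\otimes 1$. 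No testing against $\lambda_u$ is needed; the equality falls out once the right conjugation-based definition of $a_i$ is chosen. The strict-convergence argument for $(a_i)\in MC_I(C_0^u(\hh\G))$ is then done by an explicit $\epsilon$-approximation using that $\hh\Ww(a\otimes\theta)\in C_0^u(\hh\G)\otimes\mc B_0(L^2(\G))$ for compact $\theta$.
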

\begin{proof}
Let $L_*\in C_{cb}^l(L^1(\G))$ and $x$ be linked as before.
Let $L(a) = S^*(a\otimes 1)T$ as in the proof of Lemma~\ref{lem:in_mult_alg},
and again suppose that $C_0^u(\hh\G)$ is represented on a Hilbert space $K$.
Fix a unit vector $\xi\in L^2(\G)$, let $(e_i)$ be an orthonormal basis of
$L^2(\G)\otimes H$, and for each $i$, set
\[ a_i = (\id\otimes\omega_{\xi,e_i})(\hh\Ww^*_{13}(1\otimes T)\hh\Ww). \]
This is a slight abuse of notation; by definition, what we mean is that
$a_i\in\mc B(K)$ is the operator
\[ (a_i(\alpha)|\beta) = \big( \hh\Ww^*_{13}(1\otimes T)\hh\Ww
(\alpha\otimes\xi) \big| \beta\otimes e_i \big)
\qquad (\alpha,\beta\in K), \]
which makes sense as $\Ww \in M(C_0^u(\hh\G)\otimes C_0(\G))
\subseteq \mc B(K\otimes L^2(\G))$.  Equivalently,
\[ \sum_i a_i(\alpha)\otimes e_i = \hh\Ww^*_{13}(1\otimes T)\hh\Ww
(\alpha\otimes\xi)  \qquad (\alpha\in K). \]
Thus, for $\alpha,\beta\in K$,
\begin{align*} \Big( \sum_i a_i^*a_i (\alpha) \Big| \beta \Big)
&= \big( \hh\Ww^*_{13}(1\otimes T)\hh\Ww (\alpha\otimes\xi) \big|
\hh\Ww^*_{13}(1\otimes T)\hh\Ww(\beta\otimes\xi) \big) \\
&= \big( \hh\Ww^*(1\otimes T^*T)\hh\Ww(\alpha\otimes\xi) \big|
   \beta\otimes\xi \big). \end{align*}
So $\sum_i a_i^*a_i = (\id\otimes\omega_{\xi,\xi})(\hh\Ww^*(1\otimes T^*T)\hh\Ww)
\in M(C_0^u(\hh\G))$, where the sum converges weakly in $\mc B(K)$.
We claim that each $a_i\in M(C_0^u(\hh\G))$, and that this sum converges strictly.
We shall show that for $a\in C_0^u(\hh\G)$, the sum
$\sum_i a_i^*a_ia$ is norm convergent; the proof for
$\sum_i aa_i^*a_i$ is similar.

Choose $\theta\in\mc B_0(L^2(\G))$ with $\theta(\xi)=\xi$.
As $\hh\Ww(a\otimes\theta)\in C_0^u(\hh\G)\otimes\mc B_0(L^2(\G))$,
and arguing similarly to the proof of Lemma~\ref{lem:in_mult_alg},
for $\epsilon>0$ we can find $(a_j)_{j=1}^n \subseteq C_0^u(\hh\G)$ and
$(\xi_j)_{j=1}^n \subseteq L^2(\G)\otimes H$ with
\[ \Big\| \hh\Ww^*_{13}(1\otimes T)\hh\Ww(a\alpha\otimes\xi) 
- \sum_j a_j(\alpha) \otimes \xi_j \Big\| \leq \epsilon\|\alpha\|
\qquad (\alpha\in K). \]
Here we used that $\hh\Ww(a\alpha\otimes\xi) =
\hh\Ww(a\otimes\theta)(\alpha\otimes\xi)$ for any $\alpha$.
For each $j$ let $\xi_j = \sum_i \xi_{j,i} e_i$, so equivalently we have that
\[ \Big\| \sum_i \Big( a_ia(\alpha)
- \sum_j \xi_{j,i}a_j(\alpha) \Big) \otimes e_i \Big\| \leq \epsilon\|\alpha\|
\qquad (\alpha\in K). \]
From this (and a similar argument with $a$ on the left) it follows that
indeed $a_i\in M(C_0^u(\hh\G))$ for each $i$.  Continuing,
let $\epsilon_0>0$ with $\epsilon_0 \sum_j \|a_j\| \leq \epsilon$, and
choose a finite subset
$I_0\subseteq I$ with, for each $j$,
\[ \sum_{i\not\in I_0} |\xi_{j,i}|^2 \leq \epsilon_0^2. \]
Then
\begin{align*}
\Big\| \sum_{i\not\in I_0} a_ia(\alpha) \otimes e_i \Big\|
&= \Big\| \sum_i a_ia(\alpha) \otimes e_i -
   \sum_{i\in I_0} a_ia(\alpha) \otimes e_i \Big\| \\
&\leq \epsilon\|\alpha\| + \Big\| \sum_i \sum_j \xi_{j,i}a_j(\alpha) \otimes e_i
   - \sum_{i\in I_0} a_ia(\alpha) \otimes e_i \Big\| \\
&\leq \epsilon\|\alpha\| + \Big\| \sum_{i\not\in I_0}
   \sum_j \xi_{j,i}a_j(\alpha) \otimes e_i \Big\| +
   \Big\| \sum_{i\in I_0} \Big( \sum_j \xi_{j,i}a_j(\alpha) - a_ia(\alpha)
   \Big) \otimes e_i \Big\| \\
&\leq \epsilon\|\alpha\| + \|\alpha\| \epsilon_0 \sum_j \|a_j\|
   + \epsilon \|\alpha\|
\leq 3 \epsilon\|\alpha\|.
\end{align*}
Finally, we then see that for $\alpha,\beta\in K$,
\begin{align*}
\Big| \Big(\sum_{i\in I_0} a_i^*a_i a(\alpha)\Big|\beta\Big)
&- \big( \hh\Ww^*(1\otimes T^*T)\hh\Ww(a(\alpha)\otimes\xi) \big|
   \beta\otimes\xi\big) \Big| \\
&= \Big| \Big( \sum_{i\in I_0} a_ia(\alpha)\otimes e_i \Big|
\sum_j a_j(\beta)\otimes e_j \Big)
- \Big( \sum_i a_ia(\alpha)\otimes e_i \Big|
\sum_j a_j(\beta)\otimes e_j \Big) \Big| \\
&\leq \|T\| \|\beta\| \Big\| \sum_{i\not\in I_0} a_ia(\alpha) \otimes e_i \Big\|.
\end{align*}
Putting these together, we see that
\[ \Big\| \sum_{i\in I_0} a_i^*a_i a -
(\id\otimes\omega_{\xi,\xi})(\hh\Ww^*(1\otimes T^*T)\hh\Ww)a \Big\|
\leq 3\epsilon, \]
as required.  Thus $(a_i) \in MC_I(C_0^u(\hh\G))$.

We similarly define $b_i = 
(\id\otimes\omega_{\xi,e_i})(\hh\Ww^*_{13}(1\otimes S)\hh\Ww)$.
Now, using a similar abuse of notation,
\[ \Delta(a_i) = (\id\otimes\id\otimes\omega_{\xi,e_i})\big(
\hh\Ww_{24}^*\hh\Ww_{14}^*(1\otimes 1\otimes T)\hh\Ww_{13}\hh\Ww_{23} \big). \]
Then
\begin{align*} \sum_i & (1\otimes b_i)^*\Delta(a_i)
= \sum_i (\id\otimes\id\otimes\omega_{e_i,\xi})\big(\hh\Ww^*_{23}
(1\otimes 1\otimes S^*) \hh\Ww_{24}\big) \\
&\qquad\qquad\qquad\qquad\qquad
(\id\otimes\id\otimes\omega_{\xi,e_i})\big(
\hh\Ww_{24}^*\hh\Ww_{14}^*(1\otimes 1\otimes T)\hh\Ww_{13}\hh\Ww_{23} \big) \\
&= (\id\otimes\id\otimes\omega_{\xi,\xi})\big(\hh\Ww^*_{23}
(1\otimes 1\otimes S^*) \hh\Ww_{24}
\hh\Ww_{24}^*\hh\Ww_{14}^*(1\otimes 1\otimes T)\hh\Ww_{13}\hh\Ww_{23} \big) \\
&= (\id\otimes\id\otimes\omega_{\xi,\xi})\big(\hh\Ww^*_{23}
(1\otimes 1\otimes S^*) \hh\Ww_{14}^*(1\otimes 1\otimes T)\hh\Ww_{13}
\hh\Ww_{23} \big) \\
&= (\id\otimes\id\otimes\omega_{\xi,\xi})\big(\hh\Ww^*_{23}
(\id\otimes L)(\hh\Ww^*)_{13}\hh\Ww_{13}
\hh\Ww_{23} \big).
\end{align*}
As $\hh\Ww^* = \sigma( \wW )$ and $(L\otimes\id)(\wW) = (1\otimes x)\wW$,
it follows that
\[ \sum_i (1\otimes b_i)^*\Delta(a_i)
= (\id\otimes\id\otimes\omega_{\xi,\xi})\big(\hh\Ww^*_{23}
(x\otimes 1)_{13}(\hh\Ww^*)_{13}\hh\Ww_{13}
\hh\Ww_{23} \big)
= x\otimes 1, \]
as claimed.
\end{proof}

We now state a converse to the previous result, and show how to recover the
original centraliser.

\begin{theorem}\label{thm:and_converse}
Let $(a_i),(b_i)\in MC_I(C_0^u(\hh\G))$ be such that
$\sum_i (1\otimes b_i^*)\Delta(a_i) = x\otimes 1$ for some $x\in M(C_0^u(\hh\G))$.
Then $x\in M_{cb}^{l,u}(L^1(\G))$ and the associated
$L_*\in C_{cb}^l(L^1(\G))$ is given by
$L(a) = \sum_i \pi_{\hh\G}(b_i)^*  a \pi_{\hh\G}(a_i)$ for $a\in L^\infty(\G)$,
with convergence weakly in $\mc B(L^2(\G))$.
\end{theorem}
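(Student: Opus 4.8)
The strategy is to reverse the computation in the proof of Theorem~\ref{thm:mci_form}. Write $c_i = \pi_{\hh\G}(a_i)$ and $d_i = \pi_{\hh\G}(b_i)$. Since $\pi_{\hh\G}$ is a non-degenerate $*$-homomorphism, $(c_i),(d_i)\in MC_I(C_0(\hh\G))$, so, fixing an orthonormal basis $(e_i)_{i\in I}$ of $\ell^2(I)$, the column operators $A,B\in\mc B(L^2(\G),L^2(\G)\otimes\ell^2(I))$ defined by $A\xi = \sum_i c_i\xi\otimes e_i$ and $B\xi = \sum_i d_i\xi\otimes e_i$ are bounded, with $\|A\|^2 = \|\sum_i c_i^*c_i\|$ and $\|B\|^2 = \|\sum_i d_i^*d_i\|$. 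Put $L(a) = B^*(a\otimes 1_{\ell^2(I)})A$ for $a\in\mc B(L^2(\G))$. A direct check gives $L(a) = \sum_i d_i^* a c_i$ with the series converging strongly and with uniformly bounded partial sums, and $a\mapsto B^*(a\otimes 1)A$ is a normal completely bounded map with $\|L\|_{cb}\le\|A\|\,\|B\|$; this is the map appearing in the statement.

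The heart of the proof is the identity $(L\otimes\id)(\wW) = (1\otimes x)\wW$; once it is in place, the conclusion follows from Theorem~\ref{thm:cent_is_mult}. Since $L = B^*(\,\cdot\otimes 1)A$, its normal ampliation is $(L\otimes\id)(z) = (B\otimes 1_K)^*z_{13}(A\otimes 1_K)$, whence $(L\otimes\id)(\wW) = \sum_i (d_i^*\otimes 1)\wW(c_i\otimes 1)$ strongly, and thus $(L\otimes\id)(\wW)\wW^* = \sum_i (d_i^*\otimes 1)\wW(c_i\otimes 1)\wW^*$. Using $\hh\Ww^* = \sigma(\wW)$ one rewrites $\wW(c_i\otimes 1)\wW^* = \sigma\big(\hh\Ww^*(1\otimes c_i)\hh\Ww\big)$, and the universal coproduct relation of \cite[Proposition~6.2]{kusuni} applied to $\hh\G$, $\hh\Ww^*(1\otimes\pi_{\hh\G}(a))\hh\Ww = (\id\otimes\pi_{\hh\G})\Delta_{\hh\G}(a)$ for $a\in C_0^u(\hh\G)$, together with $c_i = \pi_{\hh\G}(a_i)$, gives
\[ (L\otimes\id)(\wW)\wW^* = \sigma\Big( (\id\otimes\pi_{\hh\G})\Big( \sum_i (1\otimes b_i^*)\Delta_{\hh\G}(a_i) \Big) \Big) = \sigma\big( (\id\otimes\pi_{\hh\G})(x\otimes 1) \big) = 1\otimes x. \]

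The step I expect to require most care is the convergence bookkeeping underlying this display, since the core calculation is otherwise just the earlier one read backwards. One checks that $(1\otimes b_i)$ and $(\Delta_{\hh\G}(a_i))$ both lie in $MC_I(C_0^u(\hh\G)\otimes C_0^u(\hh\G))$, so by \cite[Lemma~5.28]{kv} the series $\sum_i (1\otimes b_i^*)\Delta_{\hh\G}(a_i)$ converges strictly with uniformly bounded partial sums; applying the non-degenerate $*$-homomorphism $\id\otimes\pi_{\hh\G}$, which is strictly continuous on bounded sets, is then legitimate and yields a strictly — hence strongly — convergent series, matching the strong convergence on the left, while $\sigma$ and right multiplication by $\wW^*$ are strongly continuous. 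This is the same style of argument as in the proof of Theorem~\ref{thm:mci_form}.

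It remains to see that $L$ restricts to a normal completely bounded endomorphism of $L^\infty(\G)$, so that $L_*\in\mc{CB}(L^1(\G))$ and Theorem~\ref{thm:cent_is_mult} applies. From the identity just proved, $(L\otimes\id)(\wW) = (1\otimes x)\wW\in M(\mc B_0(L^2(\G))\otimes C_0^u(\hh\G))$, and slicing the second leg by $\omega\in C_0^u(\hh\G)^*$ gives $L\big((\id\otimes\omega)(\wW)\big) = (\id\otimes\omega(x\,\cdot))(\wW)$; as such elements are norm-dense in $C_0(\G)$ and $L$ is bounded and normal, $L(C_0(\G))\subseteq C_0(\G)$ and hence $L(L^\infty(\G))\subseteq L^\infty(\G)$, exactly as in the Remark following the definition of $M_{cb}^{l,u}(L^1(\G))$. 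Theorem~\ref{thm:cent_is_mult} now gives $L_*\in C_{cb}^l(L^1(\G))$ with reduced multiplier $\pi_{\hh\G}(x)$; its associated universal multiplier $x'$ satisfies $(L\otimes\id)(\wW) = (1\otimes x')\wW$, so $x' = x$ by unitarity of $\wW$, and therefore $x\in M_{cb}^{l,u}(L^1(\G))$ is precisely the element associated to the centraliser $L_*$.
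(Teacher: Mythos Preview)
Your proof is correct and follows essentially the same route as the paper: define $L(a)=\sum_i \pi_{\hh\G}(b_i)^*a\pi_{\hh\G}(a_i)$, compute $(L\otimes\id)(\wW)\wW^*$ by rewriting via $\hh\Ww^*=\sigma(\wW)$ and the relation $\hh\Ww^*(1\otimes\pi_{\hh\G}(a))\hh\Ww=(\id\otimes\pi_{\hh\G})\Delta_{\hh\G}(a)$, obtain $1\otimes x$, and invoke Theorem~\ref{thm:cent_is_mult}. Your version is slightly more explicit about the column-operator realisation of $L$ and the convergence bookkeeping, and you spell out that the universal multiplier equals $x$ rather than merely $\pi_{\hh\G}(x)$, but the argument is the paper's.
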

\begin{proof}
Define $L:L^\infty(\G)\rightarrow\mc B(L^2(\G)); a \mapsto \sum_i
\pi_{\hh\G}(b_i)^*a\pi_{\hh\G}(a_i)$, so that $L$ is a normal
completely bounded map.  Consider then
\begin{align*} (L\otimes\id)(\wW)
&= \sigma \sum_i (1\otimes\pi_{\hh\G}(b_i)^*)\hh\Ww^*
(1\otimes \pi_{\hh\G}(a_i)) \\
&= \sigma \sum_i (1\otimes\pi_{\hh\G}(b_i)^*)\hh\Ww^*
(1\otimes \pi_{\hh\G}(a_i)) \hh\Ww \hh\Ww^* \\
&= \sigma \sum_i (1\otimes\pi_{\hh\G}(b_i)^*)
   (\id\otimes\pi_{\hh\G})\Delta(a_i) \hh\Ww^* \\
&= \sigma\Big( (\id\otimes\pi_{\hh\G})
   \Big(\sum_i (1\otimes b_i^*) \Delta(a_i) \Big) \hh\Ww^*\Big) \\
&= (1\otimes x) \wW.
\end{align*}
By applying $\id\otimes\pi_{\hh\G}$ we also see that $(L\otimes\id)(W)
= (1\otimes\pi_{\hh\G}(x))(W)$.
As slices $(\id\otimes\hh\omega)(W)$, with $\hh\omega \in L^1(\hh\G)$,
form a weak$^*$-dense subspace of
$L^\infty(\G)$, this calculation shows that $L$ does indeed map $L^\infty(\G)$
to $L^\infty(\G)$.  Furthermore, we have now verified the condition in
Theorem~\ref{thm:cent_is_mult} and so $L_*$ is a left completely bounded
centraliser, associated to the ``universal'' multiplier $x$, which completes
the proof.
\end{proof}

\subsection{Multipliers and morphisms}\label{sec:mult_mor}

Throughout this section, let $\G,\H$ be locally compact quantum groups,
and let $\G\rightarrow\H$
be a morphism, represented by $\phi:C_0^u(\H)\rightarrow M(C_0^u(\G))$,
$U\in M(C_0(\G)\otimes C_0(\hh\H))$ and $\beta:C_0(\H)\rightarrow M(C_0(\G)
\otimes C_0(\H))$.  Let $\hh\phi:C_0^u(\hh\G)\rightarrow M(C_0^u(\hh\H))$ be
the dual Hopf $*$-homomorphism.

The following is then the most natural way that morphisms and multipliers could
interact in.  We shall then go on to show how the other ``pictures'' also
interact in natural ways.

\begin{theorem}\label{thm:main_mor}
The map $\hh\phi$ restricts to a homomorphism $M_{cb}^{l,u}(L^1(\G))
\rightarrow M_{cb}^{l,u}(L^1(\H))$.
\end{theorem}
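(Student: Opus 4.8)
The plan is to use the characterisation of $M_{cb}^{l,u}(L^1(\G))$ provided by Theorems~\ref{thm:mci_form} and~\ref{thm:and_converse}, together with the defining relation of the dual morphism. Given $x\in M_{cb}^{l,u}(L^1(\G))$, I would first invoke Theorem~\ref{thm:mci_form} to obtain families $(a_i),(b_i)\in MC_I(C_0^u(\hh\G))$ with $\sum_i (1\otimes b_i^*)\Delta_{\hh\G}(a_i) = x\otimes 1$ in $M(C_0^u(\hh\G)\otimes C_0^u(\hh\G))$. Since $\hh\phi:C_0^u(\hh\G)\rightarrow M(C_0^u(\hh\H))$ is a non-degenerate $*$-homomorphism, its strict extension preserves the $MC_I$ condition (as recalled in the paragraph before Theorem~\ref{thm:mci_form}), so $(\hh\phi(a_i)),(\hh\phi(b_i))\in MC_I(C_0^u(\hh\H))$. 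The key identity needed is that $\hh\phi$ intertwines the coproducts, i.e. $(\hh\phi\otimes\hh\phi)\Delta_{\hh\G} = \Delta_{\hh\H}\hh\phi$ — this is exactly the statement that $\hh\phi$ is a Hopf $*$-homomorphism, which holds because the dual morphism $\H\rightarrow\G$ is itself a morphism of quantum groups. Applying $\hh\phi\otimes\hh\phi$ to the displayed relation then gives
\[ \sum_i (1\otimes \hh\phi(b_i)^*)\Delta_{\hh\H}(\hh\phi(a_i))
= (\hh\phi\otimes\hh\phi)(x\otimes 1) = \hh\phi(x)\otimes 1, \]
where on the right I use that $\hh\phi$ applied to the constant leg $1\in M(C_0^u(\hh\G))$ gives $1\in M(C_0^u(\hh\H))$ (non-degeneracy). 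By Theorem~\ref{thm:and_converse}, this precisely shows $\hh\phi(x)\in M_{cb}^{l,u}(L^1(\H))$, so $\hh\phi$ does restrict to a well-defined map $M_{cb}^{l,u}(L^1(\G))\rightarrow M_{cb}^{l,u}(L^1(\H))$.

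It remains to check this restriction is an algebra homomorphism. Since $M_{cb}^{l,u}(L^1(\G))$ and $M_{cb}^{l,u}(L^1(\H))$ are, under the bijections with centralisers, simply algebras of elements in $M(C_0^u(\hh\G))$ and $M(C_0^u(\hh\H))$ under their ordinary operator products (the multiplier-to-centraliser correspondence being an algebra isomorphism, as noted after Theorem~\ref{thm:cent_is_mult} and in the earlier discussion), and since $\hh\phi$ is multiplicative, multiplicativity of the restriction is immediate — one just needs that the subalgebra $M_{cb}^{l,u}(L^1(\G))$ is closed under the product of $M(C_0^u(\hh\G))$, which follows because the corresponding statement for centralisers is obvious (composition of right module maps is a right module map). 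I would also note that this $\hh\phi$-restriction is compatible with the reduced picture: applying $\pi_{\hh\H}$ and using $\pi_{\hh\H}\hh\phi = (\text{reduced dual morphism})\,\pi_{\hh\G}$ shows the square with the $\pi$'s commutes, though that is really material for the subsequent discussion rather than this theorem.

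The only genuinely non-routine point is justifying that $(\hh\phi\otimes\hh\phi)\Delta_{\hh\G} = \Delta_{\hh\H}\hh\phi$; everything else is bookkeeping with strict extensions and $MC_I$ convergence. This intertwining is not arbitrary: it is built into the definition of $\hh\phi$ as the Hopf $*$-homomorphism associated to the dual morphism $\H\rightarrow\G$, which exists and is a genuine morphism by the duality discussion in Section~2 (the relation $(\phi\otimes\id)(\WW_\H) = (\id\otimes\hh\phi)(\WW_\G)$ identifies $\hh\phi$, and being a morphism it intertwines coproducts by item~1 of the list of equivalent descriptions). So I expect the proof to be short: cite Theorem~\ref{thm:mci_form}, apply $\hh\phi\otimes\hh\phi$, cite the Hopf property, cite Theorem~\ref{thm:and_converse}, and remark on multiplicativity.
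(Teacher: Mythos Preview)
Your proposal is correct and follows essentially the same approach as the paper: invoke Theorem~\ref{thm:mci_form} to get the $MC_I$ families, apply $\hh\phi\otimes\hh\phi$ using the Hopf $*$-homomorphism property, and conclude via Theorem~\ref{thm:and_converse}. The paper's proof is terser (it does not spell out the $MC_I$ preservation or the multiplicativity remark, treating both as immediate), but the argument is identical.
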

\begin{proof}
Let $L_*\in C_{cb}^l(L^1(\G))$ and let $(a_i),(b_i)\in MC_I(C_0^u(\hh\G))$
as in Theorem~\ref{thm:mci_form}, so $L_*$ is associated to
$x\in M_{cb}^{l,u}(L^1(\G))$ where
$\sum_i (1\otimes b_i^*)\Delta(a_i) = x\otimes 1$.

As $\hh\phi$ is a Hopf $*$-homomorphism,
\begin{align*}
\sum_i (1\otimes \hh\phi(b_i^*))\Delta(\hh\phi(a_i))
= (\hh\phi\otimes\hh\phi)\sum_i (1\otimes b_i^*)\Delta(a_i)
= \hh\phi(x)\otimes 1, \end{align*}
and so an application of Theorem~\ref{thm:and_converse} shows that there
is $L' \in C_{cb}^l(L^1(\H))$ associated with $\hh\phi(x)\in
M_{cb}^{l,u}(L^1(\H))$ as required.
\end{proof}

\begin{remark}
The ``classical'' situation here is detailed in \cite[Section~6.1]{spronk},
where it is shown that a group homomorphism $G\rightarrow H$, which induces
a Hopf $*$-homomorphism $C_0(H) \rightarrow C_b(G)$, restricts to a map
$M_{cb}A(H) \rightarrow M_{cb}A(G)$.  In our language, we would start with
a morphism $\hh H \rightarrow \hh G$, say given by $\phi:C^*(G) \rightarrow
M(C^*(H))$, and then consider the dual $\hh\phi:C_0(H) \rightarrow C_b(G)$.
Hence we exactly recover the classical result, once we have the ``duality
convention'' correct.  
\end{remark}

One way to find centralisers of $L^1(\G)$ is to embed $L^1(\G)$ into
$C_0^u(\G)^*$, where it becomes a closed two-sided ideal, and so (left)
multiplication by elements of $C_0^u(\G)^*$ define members of
$C_{cb}^l(L^1(\G))$ (and all \emph{completely positive} centralisers arise in
this way, \cite{daws1}).  The following shows that morphisms, from the Hopf
$*$-homomorphism perspective, behave as expected.

\begin{proposition}\label{prop:comm_diag}
We have the commutative diagram
\[ \xymatrix{ C_0^u(\G)^* \ar[r]^-{\phi^*} \ar[d] & C_0^u(\H)^* \ar[d] \\
M_{cb}^{l,u}(L^1(\G)) \ar[r]^-{\hh\phi} & M_{cb}^{l,u}(L^1(\H)) } \]
where the bottom arrow is given by the previous theorem.
\end{proposition}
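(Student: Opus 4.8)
The claim is that for $\mu \in C_0^u(\G)^*$, the centraliser given by left-convolution by $\mu$ on $L^1(\G) \subseteq C_0^u(\G)^*$ is sent, under the bottom arrow $\hh\phi$ (from Theorem~\ref{thm:main_mor}), to the centraliser given by left-convolution by $\phi^*(\mu) \in C_0^u(\H)^*$. Both vertical arrows are the maps sending a functional $\mu$ to the universal multiplier it induces; so the task is to identify the latter universal multiplier explicitly. The natural first step is: for $\mu \in C_0^u(\G)^*$, what is the element $x_\mu \in M(C_0^u(\hh\G))$ associated to the centraliser $L_* : \omega \mapsto \mu \star \omega$? I expect $x_\mu = (\mu\otimes\id)(\WW_\G)$, the ``universal'' analogue of $\lambda$; this should follow by slicing the pentagonal/coassociativity identity for $\WW_\G$ against $\mu$, exactly as one checks that $\lambda(\omega)\lambda(\omega') = \lambda(\omega\star\omega')$, but now using $\WW_\G \in M(C_0^u(\G)\otimes C_0^u(\hh\G))$ and the identity $(\Delta\otimes\id)(\WW) = \WW_{13}\WW_{23}$ (together with the defining relation $(\id\otimes\pi)\Delta(a) = \Ww^*(1\otimes\pi(a))\Ww$ and its universal-level strengthening) to verify the defining relation $(L\otimes\id)(\wW) = (1\otimes x_\mu)\wW$ from Theorem~\ref{thm:cent_is_mult}. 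So both vertical maps are concretely $\mu \mapsto (\mu\otimes\id)(\WW)$, for $\G$ and for $\H$ respectively.

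Granting that identification, the proof reduces to the single computation
\[ \hh\phi\big( (\mu\otimes\id)(\WW_\G) \big) = (\phi^*(\mu)\otimes\id)(\WW_\H)
\qquad (\mu \in C_0^u(\G)^*). \]
The left side equals $(\mu\otimes\hh\phi)(\WW_\G)$. Now the duality relation recalled at the end of Section~2 states precisely that $(\phi\otimes\id)(\WW_\H) = (\id\otimes\hh\phi)(\WW_\G)$. Slicing this identity on the first leg against $\mu \in C_0^u(\G)^*$ gives $(\mu\otimes\id)\big((\phi\otimes\id)(\WW_\H)\big) = (\mu\otimes\hh\phi)(\WW_\G)$, and the left-hand side is $((\mu\circ\phi)\otimes\id)(\WW_\H) = (\phi^*(\mu)\otimes\id)(\WW_\H)$. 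This is exactly the desired equality; the diagram then commutes because chasing $\mu$ down-then-right gives $\hh\phi(x_\mu) = \hh\phi((\mu\otimes\id)(\WW_\G))$, while right-then-down gives $x_{\phi^*(\mu)} = (\phi^*(\mu)\otimes\id)(\WW_\H)$, and these agree.

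One point requiring care: $\mu$ is only a bounded functional on the $C^*$-algebra $C_0^u(\G)$, while $\WW_\G$ lives in the multiplier algebra $M(C_0^u(\G)\otimes C_0^u(\hh\G))$, so the slice $(\mu\otimes\id)(\WW_\G)$ must be interpreted via the strict extension of $\mu$ (equivalently, view $\mu \in C_0^u(\G)^{**}$ and slice in $C_0^u(\G)^{**}\vnten M(C_0^u(\hh\G))$-type terms); the same remark applies to $\WW_\H$ and to the slice of the duality relation. This is routine given the machinery already invoked in the paper (strict extensions of non-degenerate $*$-homomorphisms, and the standard theory of slice maps), so I would dispatch it with a sentence. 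The main conceptual obstacle — and the step I would spend the most words on — is the first one: verifying cleanly that left-convolution by $\mu$ on $L^1(\G)$, as a centraliser, corresponds under Theorem~\ref{thm:cent_is_mult} to the element $(\mu\otimes\id)(\WW_\G)$; once that ``universal $\lambda$'' description of the left-hand vertical map is in hand, the rest is a one-line slice of the duality relation.
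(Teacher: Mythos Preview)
Your proposal is correct and is essentially identical to the paper's proof: the paper also first identifies the vertical map as $\mu \mapsto (\mu\otimes\id)(\WW)$ by verifying the relation $(L\otimes\id)(\wW) = (1\otimes(\mu\otimes\id)(\WW))\wW$ via $(\Delta\otimes\id)(\WW)=\WW_{13}\WW_{23}$, and then slices the duality identity $(\phi\otimes\id)(\WW_\H)=(\id\otimes\hh\phi)(\WW_\G)$ against $\mu$ to conclude. The only difference is presentational---the paper dispatches the slice-on-multiplier-algebras issue silently rather than flagging it.
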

\begin{proof}
Let $\mu\in C_0^u(\G)^*$ and write $\pi_\G^*: L^1(\G) \rightarrow C_0^u(\G)^*$
for the embedding, which is a completely isometric algebra homomorphism.
Let $\mu$ induce $L_*\in C_{cb}^l(L^1(\G))$, which means that
$\pi_\G^*(L_*(\omega))
= \mu \pi_\G^*(\omega) = (\mu\otimes\omega)(\id\otimes\pi_\G)\Delta$
for each $\omega\in L^1(\G)$.  Then
\begin{align*} (L\otimes\id)(\wW) &= (L\pi_\G\otimes\id)(\WW)
= ((\mu\otimes\pi_\G)\Delta\otimes\id)(\WW)
= (\mu\otimes\pi_\G\otimes\id)(\WW_{13} \WW_{23}) \\
&= ((\mu\otimes\id)(\WW)\otimes 1) \wW. \end{align*}
By Theorem~\ref{thm:cent_is_mult}, we see that $x = (\mu\otimes\id)(\WW)
\in M(C_0^u(\hh\G)) \in M_{cb}^{l,u}(L^1(\G))$ is associated to $L$ and
hence to $\mu$.

Similarly let $\phi^*(\mu) \in C_0^u(\H)^*$ induce $L'_*\in C_{cb}^l(L^1(\H))$
which is thus associated to $x'\in M_{cb}^{l,u}(L^1(\H))$ where
\[ x' = (\phi^*(\mu)\otimes\id)(\WW_\H)
= (\mu\otimes\id) \big( (\phi\otimes\id)(\WW_\H) \big)
= (\mu\otimes\id) \big( (\id\otimes\hh\phi)(\WW_\G) \big)
= \hh\phi(x), \]
as required to show that the diagram commutes.
\end{proof}

We now demonstrate a similar link at the level of centralisers, and not
multipliers, using bicharacters and quantum group homomorphisms (a picture
not really explored in \cite{spronk}, for example, but see
Remark~\ref{rem:bichar_only} below for links with \cite{kal}).

\begin{lemma}\label{lem:cent_bichar}
Let $L_*\in C_{cb}^l(L^1(\G))$ and $x\in M_{cb}^{l,u}(L^1(\G))$ be linked,
and let $U$ be the bicharacter representing the morphism $\G\rightarrow\H$.
Then we have that $(L\otimes\id)(U) = (1\otimes \pi_{\hh\H}\hh\phi(x))U$.
\end{lemma}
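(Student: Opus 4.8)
The plan is to relate the bicharacter $U$ of the morphism $\G\to\H$ to the universal unitary $\Ww_\H$ via the formula $U = (\pi_\G\phi\otimes\id)(\Ww_\H)$ recorded in the summary of morphisms, and then to push the identity $(L\otimes\id)(\wW) = (1\otimes x)\wW$ from Theorem~\ref{thm:cent_is_mult} through this formula. First I would write, using that $L$ is normal and $\pi_\G\phi$ is a $*$-homomorphism into $M(C_0^u(\G))$ (so slicing is legitimate, much as in the proof of Proposition~\ref{prop:comm_diag}),
\[ (L\otimes\id)(U) = (L\otimes\id)\big((\pi_\G\phi\otimes\id)(\Ww_\H)\big). \]
The key move is to recognise that $(\pi_\G\phi\otimes\id)(\Ww_\H)$ lives inside $L^\infty(\G)\vnten C_0(\hh\H)''$ and can be expressed through $\wW_\G$: since $\phi$ is a Hopf $*$-homomorphism, the intertwining of coproducts together with the defining relation $(\id\otimes\pi)\Delta(a) = \Ww^*(1\otimes\pi(a))\Ww$ of \cite[Proposition~6.2]{kusuni} should let me slide the slice $(\pi_\G\phi\otimes\id)$ past the leg on which $L$ acts, converting $(L\otimes\id)(U)$ into an expression involving $(L\otimes\id)(\wW_\G)$ tensored (via $\hh\phi$, the dual of $\phi$) into the $\hh\H$-leg.

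Concretely, I expect the computation to run as follows. Using the duality relation $(\phi\otimes\id)(\WW_\H) = (\id\otimes\hh\phi)(\WW_\G)$ and applying $\pi_\G$ on the first leg and $\hh\pi_\H$ on the second, one gets $(\pi_\G\phi\otimes\id)(\Ww_\H)$ in terms of $(\pi_\G\otimes\hh\phi)(\Ww_\G)$, i.e.\ $U = (\pi_\G\otimes\pi_{\hh\H}\hh\phi)(\Ww_\G)$ — here I would be careful about which universal/reduced versions of the unitaries appear, using $\Ww_\H = (\id\otimes\hh\pi_\H)(\WW_\H)$ and $\wW_\G = (\pi_\G\otimes\id)(\WW_\G)$. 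Then
\begin{align*}
(L\otimes\id)(U) &= (L\otimes\id)\big((\pi_\G\otimes\pi_{\hh\H}\hh\phi)(\Ww_\G)\big)
= (\id\otimes\pi_{\hh\H}\hh\phi)\big((L\otimes\id)(\wW_\G)\big) \\
&= (\id\otimes\pi_{\hh\H}\hh\phi)\big((1\otimes x)\wW_\G\big)
= \big(1\otimes\pi_{\hh\H}\hh\phi(x)\big)\,(\id\otimes\pi_{\hh\H}\hh\phi)(\wW_\G)
= \big(1\otimes\pi_{\hh\H}\hh\phi(x)\big)U,
\end{align*}
which is exactly the claim. The manipulation in the middle rests on the fact that $L$ acts only on the first (i.e.\ $\G$) leg, so it commutes with the slice $\id\otimes\pi_{\hh\H}\hh\phi$ applied to the second leg; this is a routine leg-numbering/Fubini argument for normal completely bounded maps, which one can justify via the representation $L(y) = S^*(y\otimes 1)T$ as in Lemma~\ref{lem:in_mult_alg}.

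The main obstacle is bookkeeping: keeping straight the various universal versus reduced incarnations $\WW,\Ww,\wW$ of the fundamental unitary and making sure that the dual Hopf $*$-homomorphism $\hh\phi$ (which a priori maps $C_0^u(\hh\G)\to M(C_0^u(\hh\H))$) interacts correctly with $\pi_{\hh\H}$ so that $\pi_{\hh\H}\hh\phi(x)$ genuinely lands in $M(C_0(\hh\H))$ and the final identity holds in $M(C_0(\G)\otimes C_0(\hh\H))$ (not merely in some von Neumann algebra completion). I would handle this by working throughout inside $L^\infty(\G)\vnten L^\infty(\hh\H)$, establishing the identity there, and then observing that both sides already lie in $M(C_0(\G)\otimes C_0(\hh\H))$ — the right-hand side because $U$ is a bicharacter and $\pi_{\hh\H}\hh\phi(x)\in M(C_0(\hh\H))$, and the left-hand side because $L$ restricts to a map $C_0(\G)\to C_0(\G)$ (as remarked after Theorem~\ref{thm:cent_is_mult}) and slicing $U$ produces elements of $C_0(\G)$. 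The verification that $(\pi_\G\phi\otimes\id)(\Ww_\H) = (\pi_\G\otimes\pi_{\hh\H}\hh\phi)(\Ww_\G)$ is the one genuinely non-formal ingredient, and I would derive it directly from the duality relation $(\phi\otimes\id)(\WW_\H) = (\id\otimes\hh\phi)(\WW_\G)$ by applying the appropriate reducing morphisms to each leg.
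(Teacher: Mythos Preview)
Your approach is exactly the paper's: use the duality relation $(\phi\otimes\id)(\WW_\H)=(\id\otimes\hh\phi)(\WW_\G)$ together with the reducing morphisms to rewrite $U$ as $(\id\otimes\pi_{\hh\H}\hh\phi)(\wW_\G)$, then apply the identity $(L\otimes\id)(\wW_\G)=(1\otimes x)\wW_\G$ from Theorem~\ref{thm:cent_is_mult} and pull $\pi_{\hh\H}\hh\phi$ through on the second leg. The only slip is the intermediate expression, which should read $U=(\id\otimes\pi_{\hh\H}\hh\phi)(\wW_\G)$ (with $\wW_\G$, whose first leg is already reduced and whose second leg is universal so that $\hh\phi$ can act) rather than involving $\Ww_\G$ --- but you explicitly flag this bookkeeping as the place to be careful, and once the symbols are set straight your displayed computation is line-for-line the paper's.
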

\begin{proof}
As $U = (\pi_\G\phi\otimes\id)(\Ww_\H)$ we see that
\begin{align*} (L\otimes\id)(U) &= (L\pi_\G\phi\otimes\pi_{\hh\H})(\WW_\H)
= (L\pi_\G\otimes\pi_{\hh\H}\hh\phi)(\WW_\G)
= (L\otimes\pi_{\hh\H}\hh\phi)(\wW_\G) \\
&= (\id\otimes\pi_{\hh\H}\hh\phi)\big((1\otimes x)\wW_\G\big)
= (1\otimes \pi_{\hh\H}\hh\phi(x)) (\id\otimes\pi_{\hh\H}\hh\phi)(\wW_\G) \\
&= (1\otimes \pi_{\hh\H}\hh\phi(x)) U,
\end{align*}
as claimed.
\end{proof}

\begin{proposition}\label{prop:for_coactions}
Let $L_*\in C_{cb}^l(L^1(\G))$ be mapped to $L'_*\in C_{cb}^l(L^1(\H))$ by
the morphism $\G\rightarrow\H$, and let $U$ be the bicharacter representing
this morphism.  Then
\[ 1\otimes L'(a) = U (L\otimes\id)(U^*(1\otimes a)U) U^*
\qquad (a\in L^\infty(\H). \]
\end{proposition}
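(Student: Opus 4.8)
The plan is to compute the right-hand side directly using the bicharacter relation for $\beta$ and the covariance of $L$. Recall that $\beta_U(a) = U^*(1\otimes a)U$ for $a\in L^\infty(\H)$, so the expression $U(L\otimes\id)(U^*(1\otimes a)U)U^*$ is exactly $U(L\otimes\id)(\beta(a))U^*$. The key structural fact I want to exploit is the quantum group homomorphism property $(\id\otimes\Delta_\H)\beta = (\beta\otimes\id)\Delta_\H$, together with $(\Delta_\G\otimes\id)\beta = (\id\otimes\beta)\beta$, but more directly I expect to use Lemma~\ref{lem:cent_bichar}, which tells us $(L\otimes\id)(U) = (1\otimes \pi_{\hh\H}\hh\phi(x))U$, and the fact that $\pi_{\hh\H}\hh\phi(x)$ is precisely the reduced multiplier associated to $L'_*$ (this is essentially the content of Theorem~\ref{thm:main_mor} combined with the note that $\pi_{\hh\H}$ carries $M_{cb}^{l,u}(L^1(\H))$ onto $M_{cb}^{l}(L^1(\H))$). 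Write $y = \pi_{\hh\H}\hh\phi(x) \in M_{cb}^l(L^1(\H))$; then by the defining property of multipliers in $M(C_0(\hh\H))$ we have $(L'\otimes\id)(W_\H) = (1\otimes y)W_\H$, equivalently $\Delta_\H L' $ and $L'$ are related by left covariance, and $1\otimes L'(a) = W_\H^*(1\otimes y W_\H (1\otimes a) \cdots)$ — more cleanly, $L'(a) = (\omega\text{-slices of }(1\otimes y)W_\H)$ acting, but the cleanest route is: for $a\in L^\infty(\H)$, $\Delta_\H(L'(a)) = (L'\otimes\id)\Delta_\H(a)$ and also $y$ implements $L'$ via $y\lambda(\hh\omega) = \lambda(L'_*(\hh\omega))$.

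The concrete computation I would carry out: start from $U(L\otimes\id)(\beta(a))U^*$ where $\beta(a) = U^*(1\otimes a)U$. Using that $L$ is applied only in the first leg and $a$ sits in the (new) second leg, I would like to pull the $a$ out. Write everything on $L^2(\G)\otimes L^2(\H)$. We have $(L\otimes\id)\big(U^*(1\otimes a)U\big)$. Now $1\otimes a$ commutes with nothing in particular, but the trick is to use the bicharacter equation $(\id\otimes\Delta_{\hh\H})(U) = U_{13}U_{12}$ — no, rather, I should use the relation expressing $\beta$ as a coaction. Actually the direct approach: by Lemma~\ref{lem:cent_bichar}, $(L\otimes\id)(U) = (1\otimes y)U$ and by taking adjoints and using $L$ need not be $*$-preserving — so instead I apply $L\otimes\id$ to $U^*(1\otimes a)U$ by writing this as a limit of sums and using that $L\otimes\id$ restricted to $L^\infty(\G)\vnten\mathbb{C}1 \cdot (1\otimes a) \cdot L^\infty(\G)\vnten\mathbb{C}1$ behaves as a module map in the appropriate sense. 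The cleanest: note $U^*(1\otimes a)U \in L^\infty(\G)\vnten L^\infty(\H)$ and its slices in the second leg against $\hh\omega$... hmm. I think the honest route is: $\beta$ intertwines, so $(L\otimes\id)\beta = $ something. Since $\beta(a) = U^*(1\otimes a)U$ and $U = (\pi_\G\phi\otimes\id)(\Ww_\H)$, write $\beta(x) = $ image under $(\pi_\G\phi\otimes\id)$ of $\Ww_\H^*(1\otimes x)\Ww_\H = (\id\otimes\pi_\H)\Delta_\H^u$-type object; then use that $(L\otimes\id)$ commutes past $(\pi_\G\phi\otimes\id)$ appropriately via naturality, reducing to a statement purely about $\Delta_\H$, $x$, and $y$ on the universal level, which is then handled by the multiplier equation $(1\otimes x)\wW$ and the identity $(\id\otimes\pi)\Delta(a) = \Ww^*(1\otimes\pi(a))\Ww$.

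So the steps in order: (1) rewrite the RHS as $U(L\otimes\id)(\beta(a))U^*$; (2) express $\beta$ via $U = (\pi_\G\phi\otimes\id)(\Ww_\H)$ and push $L$ through $\pi_\G\phi$ using that $(L\pi_\G\phi\otimes\cdot) = (L\pi_\G\otimes\hh\phi)$-type identities already used in the proof of Lemma~\ref{lem:cent_bichar}, to reduce to the multiplier equation $(L'\otimes\id)(W_\H) = (1\otimes y)W_\H$ with $y=\pi_{\hh\H}\hh\phi(x)$; (3) conclude, using that $y$ being the reduced multiplier for $L'_*$ means precisely $W_\H^*(1\otimes y)W_\H \cdot$ stuff $= \Delta_\H$-conjugates implement $L'$ via $1\otimes L'(a) = W_\H(L'\otimes\id)(W_\H^*(1\otimes a)W_\H)W_\H^*$ — that is, the statement of the proposition with $\H$ in place of $\G$ and the identity morphism, which is just left covariance of $L'$ rewritten: $\Delta_\H(L'(a)) = (L'\otimes\id)\Delta_\H(a)$ and $\Delta_\H(a) = W_\H^*(1\otimes a)W_\H$. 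The main obstacle I anticipate is step (2): making the interchange of $L\otimes\id$ with the homomorphism $(\pi_\G\phi\otimes\id)$ rigorous, since $L$ is only completely bounded (not a homomorphism), so I cannot literally commute it past $\phi$; the resolution is to avoid that and instead work entirely with the already-established fact $(L\otimes\id)(U) = (1\otimes y)U$ from Lemma~\ref{lem:cent_bichar} together with the bicharacter identity $(\Delta_\G\otimes\id)(U) = U_{13}U_{23}$, applying $L$ in the first leg of a three-fold leg-numbered expression — concretely, $(L\otimes\id\otimes\id)(U_{13}U_{23})$ versus $(L\otimes\id\otimes\id)(\Delta_\G\otimes\id)(U) = (\Delta_\G L\otimes\id)(U)$ cannot be directly split either, so the genuinely clean path is the reduction to the identity-morphism case on $\H$ via the observation that $\pi_{\hh\H}\hh\phi(x)$ is the reduced multiplier of $L'$, which is exactly the first line of the proof of Theorem~\ref{thm:and_converse} applied to $\H$, giving $1\otimes L'(a) = W_\H(L'\otimes\id)(W_\H^*(1\otimes a)W_\H)W_\H^*$, and then one transports this along $U$ using Lemma~\ref{lem:cent_bichar} and the relation $\beta(a) = U^*(1\otimes a)U$.
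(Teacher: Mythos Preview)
Your proposal correctly identifies the essential ingredients --- Lemma~\ref{lem:cent_bichar} giving $(L\otimes\id)(U)=(1\otimes y)U$ with $y=\pi_{\hh\H}\hh\phi(x)$, and the fact that $y$ is the reduced multiplier associated to $L'_*$ --- but it does not close the gap you yourself flag: since $L$ is only completely bounded, $(L\otimes\id)$ applied to the product $U^*(1\otimes a)U$ cannot be split, and your proposed ``transport along $U$'' from the identity-morphism case is left as a slogan rather than a computation. The reduction you sketch in step~(3), namely $1\otimes L'(a)=W_\H(L'\otimes\id)(\Delta_\H(a))W_\H^*$, lives entirely on the $\H$ side with $L'$ in the first leg; relating this to $U(L\otimes\id)(\beta(a))U^*$, which has $L$ (not $L'$) in a $\G$-leg, is precisely the content of the proposition and is not obviously a matter of conjugation by $U$.

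The paper's proof supplies the missing idea. First reduce by weak$^*$-density to $a=(\id\otimes\omega)(W_\H)$, so it suffices to prove the three-leg identity
\[
1\otimes (L'\otimes\id)(W_\H) \;=\; U_{12}\,(L\otimes\id\otimes\id)\big(U^*_{12}W_{\H,23}U_{12}\big)\,U^*_{12}.
\]
The key step is then the bicharacter relation $W_{\H,23}U_{12}W_{\H,23}^*=U_{12}U_{13}$ (equivalent to $(\id\otimes\hh\Delta_\H)(U)=U_{13}U_{12}$), which rewrites $U^*_{12}W_{\H,23}U_{12}$ as $U_{13}W_{\H,23}$. Now the first leg appears only in $U_{13}$, so $(L\otimes\id\otimes\id)(U_{13}W_{\H,23})=((L\otimes\id)(U))_{13}W_{\H,23}=(1\otimes1\otimes y)U_{13}W_{\H,23}$ by Lemma~\ref{lem:cent_bichar}, and one undoes the bicharacter identity to finish. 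You tried the other bicharacter relation $(\Delta_\G\otimes\id)(U)=U_{13}U_{23}$, which indeed does not help; it is the $\hh\Delta_\H$-side identity, combined with the weak$^*$-density reduction, that separates the legs and lets $L\otimes\id$ act on a single factor.
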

\begin{proof}
By weak$^*$-continuity, it is enough to show this for
$a=(\id\otimes\omega)(W_\H)$.  With this in mind, the claim is equivalent to
\[ 1\otimes (L'\otimes\id)(W_\H) = U_{12}(L\otimes\id\otimes\id)
( U^*_{12} W_{\H,23} U_{12} ) U^*_{12}. \]
Now, we have that $W_{\H,23} U_{12} W_{\H,23}^* = U_{12} U_{13}$ as $U$ is
a bicharacter, and so the right hand side equals, using the previous lemma,
\begin{align*}
U_{12}(L\otimes\id\otimes\id)( U_{13} W_{\H,23} ) U^*_{12}
&= U_{12} \big( (1\otimes \pi_{\hh\H}\hh\phi(x)) U \big)_{13}
   W_{\H,23} U^*_{12} \\
&= U_{12} (1\otimes 1\otimes \pi_{\hh\H}\hh\phi(x)) U_{13} W_{\H,23} U^*_{12} \\
&= U_{12} (1\otimes 1\otimes \pi_{\hh\H}\hh\phi(x)) U^*_{12} W_{\H,23}  \\
&= 1 \otimes (1\otimes \pi_{\hh\H}\hh\phi(x))(W_\H)
= 1\otimes (L'\otimes\id)(W_\H)
\end{align*}
as claimed.
\end{proof}

Recalling that the quantum group homomorphism $\beta$ satisfies that
$\beta(a) = U^*(1\otimes a)U$ for $a\in L^\infty(\H)$, the following is
immediate.  Notice that we can think of this as being a generalisation of
the covariance condition which defines what it means for $L$ to be (the
adjoint of) a centraliser.

\begin{corollary}\label{corr:for_coactions}
Let $L_*\in C_{cb}^l(L^1(\G))$ be mapped to $L'_*\in C_{cb}^l(L^1(\H))$ by
our morphism, which is represented by the quantum group homomorphism $\beta$.
Then $\beta L' = (L\otimes\id) \beta$.
\end{corollary}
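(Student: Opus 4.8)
The plan is to deduce this directly from Proposition~\ref{prop:for_coactions} by conjugating with the (unitary) bicharacter $U$. Recall that $\beta(a) = U^*(1\otimes a)U$ for $a\in L^\infty(\H)$, and that $\beta$ extends to a normal unital $*$-homomorphism $L^\infty(\H)\to L^\infty(\G)\vnten L^\infty(\H)$. Since $L$ is a normal completely bounded map on $L^\infty(\G)$, the slice map $L\otimes\id$ is a well-defined normal completely bounded map on $L^\infty(\G)\vnten L^\infty(\H)$, so both sides of the claimed identity make sense as maps $L^\infty(\H)\to L^\infty(\G)\vnten L^\infty(\H)$; here one uses that $L'$ maps $L^\infty(\H)$ into $L^\infty(\H)$, as established in the discussion following Theorem~\ref{thm:cent_is_mult}.

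First I would fix $a\in L^\infty(\H)$ and compute $\beta(L'(a)) = U^*(1\otimes L'(a))U$. By Proposition~\ref{prop:for_coactions} we have $1\otimes L'(a) = U(L\otimes\id)\big(U^*(1\otimes a)U\big)U^*$, so substituting and using that $U$ is unitary gives
\[ \beta(L'(a)) = U^*U(L\otimes\id)\big(U^*(1\otimes a)U\big)U^*U = (L\otimes\id)\big(U^*(1\otimes a)U\big) = (L\otimes\id)(\beta(a)). \]
As this holds for every $a\in L^\infty(\H)$, we conclude $\beta L' = (L\otimes\id)\beta$.

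There is essentially no obstacle here: the only point requiring a moment's care is the bookkeeping that $L\otimes\id$ and $\beta L'$ are genuinely defined on all of $L^\infty(\H)$ with values in $L^\infty(\G)\vnten L^\infty(\H)$, which follows from normality of $L$ together with the fact --- already exploited in Proposition~\ref{prop:for_coactions} --- that $L'$ leaves $L^\infty(\H)$ invariant; the remainder is the trivial cancellation $U^*U = UU^* = 1$. One could equally run the argument on the weak$^*$-dense set of slices $a = (\id\otimes\omega)(W_\H)$, as in the proof of Proposition~\ref{prop:for_coactions}, but this is unnecessary once that proposition is available.
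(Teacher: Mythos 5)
Your proof is correct and is essentially the paper's own argument: the paper declares the corollary ``immediate'' from Proposition~\ref{prop:for_coactions} upon recalling that $\beta(a) = U^*(1\otimes a)U$, and your conjugation by the unitary $U$ is exactly that step spelled out. The extra bookkeeping about $L\otimes\id$ and $L'$ being well defined is sensible but not a departure from the paper's route.
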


This makes immediate sense if we work at the von Neumann algebra level, and
regard $\beta$ as a map $L^\infty(\G)\rightarrow L^\infty(\G)\vnten L^\infty(\H)$.  It is not clear how to, \emph{a priori}, give a purely
$C^*$-algebraic interpretation of this.

\begin{remark}\label{rem:bichar_only}
It is possible to work purely at the level of bicharacters and centralisers, 
without passing to multipliers and Hopf $*$-homomorphisms.  Indeed, let $U
\in L^\infty(\G) \vnten L^\infty(\hh\H)$ represent $\G\rightarrow\H$, and let
$L_*\in C_{cb}^l(\G)$.  Then consider $(L\otimes\id)(U)U^*$.  By applying
$(\Delta\otimes\id)$, and arguing as in the proof of
Theorem~\ref{thm:cent_is_mult}, we find $x\in L^\infty(\hh\H)$ with
$(L\otimes\id)(U) = (1\otimes x)U$.
Of course, $x$ will turn out to be the multiplier associated to $L'$.

For $a\in L^\infty(\H)$, we can now consider
$U(L\otimes\id)(U^*(1\otimes a)U)U^*$.
The proof of Proposition~\ref{prop:for_coactions} still works, and we find
that if $a=(\id\otimes\omega)(W_\H)$ then there is $L'(a)\in L^\infty(\H)$
with $U^*(1\otimes L'(a))U = (L\otimes\id)(U^*(1\otimes a)U)$.  Indeed,
$L'(a) = (\id\otimes\omega x)(W_\H)$.  By normality, it follows easily that
$L'$ extends to a completely bounded normal map $L^\infty(\H)\rightarrow
L^\infty(\H)$, and also $(L'\otimes\id)(W_\H) = (1\otimes x)W_\H$ and so
$L'$ is the adjoint of a centraliser.
\end{remark}

This argument, and Proposition~\ref{prop:for_coactions}, should also 
be compared with \cite[Theorem~2.1]{kal}, where the relation between
centralisers and actions of quantum groups (at the von Neumann algebra level)
is explored: we can apply this to $\beta$, as $\beta$ is a (special sort of)
coaction.  Notice that our use of the ``invariants are constant'' approach
allows us to avoid weight theory, and the use of crossed product theory.

Let us finally make some remarks about operator space structures.
The space $C_{cb}^l(L^1(\G))$ inherits a natural operator space structure as
a subspace of $\mc{CB}(L^1(\G)) \subseteq \mc{CB}(L^\infty(\G))$, and using
this, we induce an operator space structure on $M_{cb}^l(L^1(\G))$ and
$M_{cb}^{l,u}(L^1(\G))$.  As $\beta$ is a complete isometry, it follows more
or less immediately from Corollary~\ref{corr:for_coactions} that the map
$L\mapsto L'$ is a complete contraction.  Let us formally state this.

\begin{theorem}
A morphism $\G\rightarrow\H$ induces a complete contraction
$C_{cb}^l(L^1(\G)) \rightarrow C_{cb}^l(L^1(\H))$ and thus a
complete contraction $M_{cb}^{l,u}(L^1(\G)) \rightarrow M_{cb}^{l,u}(L^1(\H))$.
\end{theorem}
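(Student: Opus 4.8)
The plan is to upgrade the set-theoretic maps from the earlier results into the operator space category, using the coaction picture as the bridge, exactly as signposted in the paragraph preceding the statement. The map $L \mapsto L'$ is already known (Theorem~\ref{thm:main_mor}, via the $\hh\phi$ description, and Corollary~\ref{corr:for_coactions}) to be a well-defined algebra homomorphism $C_{cb}^l(L^1(\G)) \to C_{cb}^l(L^1(\H))$; what remains is to check it is a complete contraction, and then to transport this statement across the bijections $M_{cb}^{l,u}(L^1(\G)) \cong C_{cb}^l(L^1(\G))$ and $M_{cb}^{l,u}(L^1(\H)) \cong C_{cb}^l(L^1(\H))$.

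First I would recall the operator space structures: $C_{cb}^l(L^1(\G)) \subseteq \mc{CB}(L^\infty(\G))$ with its canonical operator space structure, and $M_{cb}^{l,u}(L^1(\G))$ carries the structure induced by transporting along the identification with centralisers (equivalently, via $\pi_{\hh\G}$ from $M_{cb}^l(L^1(\G)) \subseteq M(C_0(\G))$, though the centraliser description is what we use here). So it suffices to prove the first complete contraction assertion; the second then follows formally, since by construction the vertical identifications are complete isometries and the diagram of Theorem~\ref{thm:main_mor} / Proposition~\ref{prop:comm_diag} commutes. Then I would invoke Corollary~\ref{corr:for_coactions}: $\beta_f L' = (L \otimes \id)\beta_f$, where $\beta_f: L^\infty(\H) \to L^\infty(\G) \vnten L^\infty(\H)$ is the normal unital $*$-homomorphism representing the morphism. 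Since $\beta_f$ is a normal unital $*$-homomorphism it is a complete isometry onto its (weak$^*$-closed, by injectivity and normality) range, and it is a coaction, so $\beta_f$ has a left inverse given by a slice: applying a suitable state and using the coaction identity recovers $L'$ from $(L \otimes \id)\beta_f$.

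Concretely, the key computation: for $a \in L^\infty(\H)$ we can write $L'(a) = (\id_{L^\infty(\H)} \otimes \,\mathrm{(recovery)})\big((L \otimes \id)\beta_f(a)\big)$, where the recovery map is a fixed completely contractive map $L^\infty(\G) \vnten L^\infty(\H) \to L^\infty(\H)$ that inverts $\beta_f$ (e.g.\ composition of $\beta_f^{-1}$ on the range with a completely contractive projection, or more cleanly: since $\beta_f$ is a coaction, applying $(\epsilon \otimes \id \otimes \id)(\Delta_\G \otimes \id)\beta_f = (\epsilon \otimes \id)\,\cdot$ type manipulations, or simply using that $\beta_f$ is a complete isometry so $L' = \beta_f^{-1} \circ (L \otimes \id) \circ \beta_f$ on the appropriate ranges). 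Thus $L' $ is a composition of complete contractions: $\beta_f$ (a complete isometry), $L \otimes \id$ (completely bounded with cb-norm $= \|L\|_{cb}$ since amplification by the identity does not increase the cb-norm), and $\beta_f^{-1}$ on the range (a complete isometry). Hence $\|L'\|_{cb} \leq \|L\|_{cb}$, and indeed the same estimate holds at all matrix levels, giving complete contractivity of $L \mapsto L'$.

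The main obstacle is the bookkeeping around $\beta_f^{-1}$: one must be careful that $\beta_f$, as a \emph{normal} injective unital $*$-homomorphism between von Neumann algebras, has weak$^*$-closed range and its inverse on the range is again normal and completely isometric — this is standard but should be stated — and that the formula $\beta_f L' = (L \otimes \id)\beta_f$ from Corollary~\ref{corr:for_coactions} is genuinely an equality of maps on all of $L^\infty(\H)$ (it is, by that corollary, which was derived for all $a \in L^\infty(\H)$). Once that is in hand, the estimate $\|[L'(a_{ij})]\|_{M_n(L^\infty(\H))} = \|\beta_f^{(n)}([L'(a_{ij})])\| = \|(L \otimes \id)^{(n)}(\beta_f^{(n)}([a_{ij}]))\| \leq \|L\|_{cb}\,\|[a_{ij}]\|$ is immediate, and the passage to $M_{cb}^{l,u}$ is a formality via the complete isometries identifying the multiplier algebras with the centraliser algebras. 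I would keep the write-up short, essentially: recall the operator space structures, cite Corollary~\ref{corr:for_coactions}, note $\beta_f$ is a complete isometry with completely isometric inverse on its range, conclude $L \mapsto L'$ is a composition of complete contractions, and transport along the (complete isometry) identifications to get the statement for $M_{cb}^{l,u}$.
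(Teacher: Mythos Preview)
Your proposal is correct and follows exactly the approach the paper takes: the paper's entire argument is the sentence immediately preceding the theorem, namely that since $\beta$ is a complete isometry, the relation $\beta L' = (L\otimes\id)\beta$ from Corollary~\ref{corr:for_coactions} immediately gives $\|L'\|_{cb}\le\|L\|_{cb}$ (and the matrix-level version), with the $M_{cb}^{l,u}$ statement then following by transport along the defining complete isometries. Your write-up is more careful about the bookkeeping (normality, weak$^*$-closed range, completely isometric inverse on the range), but these are the standard facts the paper is tacitly invoking.
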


\subsection{The representation theorem}

Let $\mc{CB}^{\sigma,L^\infty(\G)}_{L^\infty(\hh\G)'}
(\mc B(L^2(\G)))$ be the space of normal completely bounded maps
$\Phi:\mc B(L^2(\G)) \rightarrow \mc B(L^2(\G))$ which restrict to maps
$L^\infty(\G)\rightarrow L^\infty(\G)$, and which are
$L^\infty(\hh\G)'$-bimodule maps.  The paper \cite{jnr} shows that
$M_{cb}^l(L^1(\G))$ is (completely isometrically) isomorphic to this space.

Given $L_*\in C_{cb}^l(L^\infty(\G))$ we can extend $L$ to all of
$\mc B(L^2(\G))$ in such a way that $L$ becomes a $L^\infty(\hh\G)'$-bimodule
map.  Indeed, we claim that for each $x\in\mc B(L^2(\G))$ there is $\Phi(x)
\in \mc B(L^2(\G))$ with
\[ 1\otimes\Phi(x) = W\big( (L\otimes\id)(W^*(1\otimes x)W) \big) W^*. \]
Then $\Phi$ is easily seen to be completely bounded, normal, to extend $L$,
and to be a $L^\infty(\hh\G)'$-bimodule map, as $W\in L^\infty(\G)\vnten
L^\infty(\hh\G)$.  That $\Phi$ exists can shown using the ``invariants are
constant'' technique, see \cite[Proposition~3.2]{daws1}.  Here we shall follow
the original approach of \cite{jnr}, and use that the linear span of
$\{ ab : a\in L^\infty(\G), b\in L^\infty(\hh\G)' \}$ is weak$^*$-dense in
$\mc B(L^2(\G))$, see for example \cite[Theorem~2.2]{daws1}.  For such $ab$
we see that
\[ W\big( (L\otimes\id)(W^*(1\otimes ab)W) \big) W^*
= W\big( (L\otimes\id)\Delta(a) \big) W^* (1\otimes b)
= 1\otimes L(a)b. \]
Thus $\Phi$ exists, $\Phi(ab) = L(a)b$ and similarly $\Phi(ba) = bL(a)$,
so establishing all the needed properties.  That any $\Phi\in
\mc{CB}^{\sigma,L^\infty(\G)}_{L^\infty(\hh\G)'}(\mc B(L^2(\G)))$ arises in
exactly this way is more intricate, see \cite[Theorem~4.10]{jnr}.

\begin{proposition}
Continuing with this notation, let $L_*\in C_{cb}^l(L^1(\G))$ be extended to
$\Phi$, and using a morphism $\G \rightarrow \H$, let $L_*$ be mapped to
$L'_*\in C_{cb}^l(L^1(\H))$, which is extended to $\Phi'$.  Then
$U^*(1\otimes\Phi'(x))U = (\Phi\otimes\id)(U^*(1\otimes x)U)$ for all
$x\in\mc B(L^2(\H))$ where again $U$ is the bicharacter associated to our
morphism.
\end{proposition}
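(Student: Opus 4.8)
The plan is to reduce the asserted identity to the weak$^*$-dense subspace $\mc M = \lin\{ ab : a\in L^\infty(\H),\, b\in L^\infty(\hh\H)'\}$ of $\mc B(L^2(\H))$ --- precisely the subspace used above to pin down $\Phi'$. Both sides of the equation are normal as functions of $x$: on the left, $\Phi'$ is normal and the maps $y\mapsto 1\otimes y$ and $z\mapsto U^*zU$ preserve normality; on the right, $\Phi\otimes\id$ is the (normal, completely bounded) ampliation of $\Phi$ to $\mc B(L^2(\G))\vnten\mc B(L^2(\H))$, and again $x\mapsto 1\otimes x$ and conjugation by $U$ are normal. So it is enough to verify equality when $x = ab$ with $a\in L^\infty(\H)$ and $b\in L^\infty(\hh\H)'$.

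For such $x$, I would first use two elementary module observations. Since $U\in L^\infty(\G)\vnten L^\infty(\hh\H)$, it commutes with $1\otimes b$, whence $U^*(1\otimes ab)U = U^*(1\otimes a)U\,(1\otimes b) = \beta(a)(1\otimes b)$, where $\beta(a) = U^*(1\otimes a)U\in L^\infty(\G)\vnten L^\infty(\H)$. Secondly, $\Phi\otimes\id$ agrees with $L\otimes\id$ on $L^\infty(\G)\vnten L^\infty(\H)$ (both are normal and coincide on elementary tensors, as $\Phi|_{L^\infty(\G)} = L$), and, acting as the identity on the second leg, $\Phi\otimes\id$ is a $1\otimes\mc B(L^2(\H))$-bimodule map. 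Combining,
\[ (\Phi\otimes\id)\big(U^*(1\otimes ab)U\big) = (\Phi\otimes\id)\big(\beta(a)(1\otimes b)\big) = (L\otimes\id)(\beta(a))\,(1\otimes b). \]

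Now Corollary~\ref{corr:for_coactions} gives $(L\otimes\id)\beta = \beta L'$, so the right-hand side equals $\beta(L'(a))(1\otimes b) = U^*(1\otimes L'(a))U(1\otimes b)$, and, commuting $U$ past $1\otimes b$ once more, this is $U^*(1\otimes L'(a)b)U = U^*(1\otimes \Phi'(ab))U$ since $\Phi'(ab) = L'(a)b$ by construction. This is exactly the left-hand side, so the identity holds on $\mc M$, hence on all of $\mc B(L^2(\H))$ by normality. The only thing requiring any care --- the nearest thing to an obstacle --- is the bookkeeping of which von Neumann algebra each operator lives in, so that the two module facts and the identification $(\Phi\otimes\id)|_{L^\infty(\G)\vnten L^\infty(\H)} = L\otimes\id$ legitimately apply, together with the standard fact that a normal completely bounded map ampliates to a normal completely bounded map on the von Neumann tensor product.
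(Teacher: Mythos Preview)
Your proof is correct and follows essentially the same approach as the paper: reduce by weak$^*$-density to $x=ab$ with $a\in L^\infty(\H)$, $b\in L^\infty(\hh\H)'$, use that $U\in L^\infty(\G)\vnten L^\infty(\hh\H)$ to commute $U$ past $1\otimes b$ and to identify $(\Phi\otimes\id)$ with $(L\otimes\id)$ on the relevant piece, then invoke the covariance relation $(L\otimes\id)\beta=\beta L'$ (the paper cites Proposition~\ref{prop:for_coactions} rather than its reformulation Corollary~\ref{corr:for_coactions}, but this is the same content). Your write-up is somewhat more explicit about the normality and bimodule bookkeeping, but the argument is the same.
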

\begin{proof}
By weak$^*$-continuity, it suffices to verify this for $x=ab$ with
$a\in L^\infty(\H)$ and $b\in L^\infty(\hh\H)'$.  However, as $U\in L^\infty(\G)
\vnten L^\infty(\hh\H)$, we see that $U^*(1\otimes x)U \in L^\infty(\G) \vnten
\mc B(L^2(\H))$ and so
\begin{align*}
(\Phi\otimes\id)(U^*(1\otimes x)U)
&= (L\otimes\id)(U^*(1\otimes a)U)(1\otimes b)
= U^*(1\otimes L'(a))U (1\otimes b) \\
&= U^*(1\otimes L'(a)b)U
= U^*(1\otimes \Phi'(x))U, \end{align*}
using Proposition~\ref{prop:for_coactions} and the discussion above.
\end{proof}

\section{Intrinsic groups}\label{sec:ig}

The \emph{intrinsic group} of a Kac algebra was stuided by De Canni{\'e}re in
\cite{cd} (for example), and for locally compact quantum groups by Kalantar and
Neufang in \cite{kn}.  In this section, we will show that the assignment of a
locally compact quantum group to its intrinsic group is a functor between
the appropriate categories, show that we can identify the intrinsic group as
the ``maximal'' classical subgroup, and then use this to show that the
``intrinsic functor'' is the left adjoint to the inclusion functor from locally
compact groups to locally compact quantum groups.  In fact, we shall show that
the intrinsic group is a \emph{closed} subgroup, in the sense of \cite{dkss},
in fact, in the strong \emph{Vaes closed} sense.

There are a number of different, equivalent, definitions of the intrinsic group,
and these different definitions have interesting interactions with the different
presentations of a morphism between quantum groups.  We wish to be rather careful
about the isomorphisms involved, and furthermore, we also want to consider the
interaction with $C_0^u(\G)$.  Thus, we shall expound some of the results from
\cite{kn} in detail.

The following is the key technical lemma; two rather different proofs can be
found in \cite[Theorem~3.2]{dp} and \cite[Theorem~3.9]{kn}.  

\begin{proposition}
Let $\G$ be a locally compact quantum group, and let $x\in L^\infty(\G)$ be
non-zero with $\Delta(x) = x\otimes x$.  Then $x$ is unitary, and
$x\in M(C_0(\G))$.
\end{proposition}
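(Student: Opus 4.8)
The plan is: a soft norm estimate; the genuinely technical step that $x$ is invertible; a short bootstrap to unitarity; and finally a separate argument for $x\in M(C_0(\G))$. First, $\|x\|=1$: since $\Delta$ is an injective normal unital $*$-homomorphism it is isometric, and $\Delta(x^*x) = \Delta(x)^*\Delta(x) = x^*x\otimes x^*x$, so $\|x^*x\| = \|x^*x\otimes x^*x\| = \|x^*x\|^2$; as $x\neq 0$ this forces $\|x\|^2 = \|x^*x\| = 1$. The same computation works for any nonzero group-like element, and since $\Delta(x^*) = x^*\otimes x^*$ the elements $x^*$ and $x^*x$ are again group-like, as is $x^{-1}$ whenever it exists (because $\Delta(x^{-1}) = \Delta(x)^{-1} = x^{-1}\otimes x^{-1}$).

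The heart of the matter is to show that $z := x^*x$ is invertible. Classically this is free from the counit: $\varepsilon(z) = \varepsilon(z)^2$ gives $\varepsilon(z)=1$, so the associated character never vanishes; but a general locally compact quantum group has no bounded counit on $L^\infty(\G)$, so one works with the Haar weight instead. Writing $\varphi$ for the left Haar weight, left invariance $(\id\otimes\varphi)\Delta(a) = \varphi(a)1$ together with $\Delta(z) = z\otimes z$ yields an identity of the shape $\big[(\id\otimes\varphi)(\Delta(a)(1\otimes z))\big]\,z = \varphi(az)\,1$; choosing $a$ in the Tomita algebra of $\varphi$ with $\varphi(az)\neq 0$ (possible since $\varphi$ is faithful and semifinite and $z\neq 0$) exhibits a left inverse of $z$, and a positive element with a left inverse is invertible. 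Making the bookkeeping of domains and definition ideals of $\varphi$ rigorous is precisely the work carried out in \cite[Theorem~3.2]{dp} and \cite[Theorem~3.9]{kn}, and this is where I expect the main difficulty to lie; I would follow one of those arguments. Once $z$ is invertible the rest is immediate: $0\leq z\leq 1$ because $\|z\| = \|x\|^2 = 1$, and $z^{-1}$ is a nonzero group-like element, so $\|z^{-1}\| = 1$ by the first stage, i.e.\ $z^{-1}\leq 1$, i.e.\ $z\geq 1$; hence $z = x^*x = 1$, so $x$ is an isometry, and applying this to the group-like element $x^*$ gives $xx^* = 1$. Thus $x$ is unitary.

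Finally, to place $x$ in $M(C_0(\G))$: now that $x$ is unitary, $\omega\mapsto\langle x,\omega\rangle$ is a $*$-character of the Banach $*$-algebra $L^1(\G)$ (multiplicativity is exactly $\Delta(x) = x\otimes x$; the $*$-identity uses $S(x) = x^*$, valid for group-like unitaries), and so it extends to a character $\theta$ of the universal algebra $C_0^u(\hh\G)$. Since $\wW = (\pi_\G\otimes\id)(\WW)$ lies in $M(C_0(\G)\otimes C_0^u(\hh\G))$, the element $(\id\otimes\theta)(\wW)$ belongs to $M(C_0(\G))$, and one checks that it coincides with $x$ by slicing the first leg against $L^1(\G)$ and comparing with $\lambda_u$. (Equivalently, one may appeal to the fact that a unitary $x$ with $\Delta(x) = x\otimes x$ is a one-dimensional unitary corepresentation of $\G$, and that unitary corepresentations are automatically multipliers of $C_0(\G)$.)
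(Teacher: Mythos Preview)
The paper does not prove this proposition at all: it simply records the statement and refers the reader to \cite[Theorem~3.2]{dp} and \cite[Theorem~3.9]{kn} for two different proofs. Your sketch therefore cannot be compared to ``the paper's proof'', but it is entirely in the spirit of those references and correctly identifies the architecture: the norm-one step and the bootstrap from invertibility of $x^*x$ to unitarity are clean and correct, and you rightly isolate the invertibility of $z=x^*x$ via the Haar weight as the genuine work, deferring the domain bookkeeping to the cited sources. The heuristic identity $[(\id\otimes\varphi)(\Delta(a)(1\otimes z))]\,z=\varphi(az)1$ is the right shape, and the difficulties you flag (making sense of $\varphi(az)$ and of the operator-valued weight $\id\otimes\varphi$ on non-positive elements) are exactly what those papers handle.

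For the final step, your primary route through $*$-characters of $L^1(\G)$ extending to $C_0^u(\hh\G)$ has a soft spot: to extend a multiplicative functional on $L^1_\sharp(\G)$ to its enveloping $C^*$-algebra $C_0^u(\hh\G)$ you need it to be a $*$-homomorphism, which is precisely your claim $S(x)=x^*$. In the absence of a bounded counit this is not a one-liner; it can be extracted from the scaling-group/unitary-antipode decomposition, but that is additional work you have not supplied. Your parenthetical alternative is the clean way and should be promoted to the main argument: once $x$ is unitary, $\Delta(x)=x\otimes x$ rewrites as $x\otimes 1 = W^*(1\otimes x)W(1\otimes x^*)$, and since $W\in M(C_0(\G)\otimes\mc B_0(L^2(\G)))$ while $1\otimes x,\,1\otimes x^*$ multiply $\mc B_0(L^2(\G))$ into itself, the right-hand side lies in $M(C_0(\G)\otimes\mc B_0(L^2(\G)))$; slicing then gives $x\in M(C_0(\G))$. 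This is exactly the device the paper uses in the Lemma immediately following the proposition, so you would also gain consistency with the surrounding text.
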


That is, all characters on $L^1(\G)$ arise from one-dimensional unitary
corepresentations of $\G$.

\begin{lemma}
Let $x\in M(C_0^u(\G))$ with $\Delta(x) = x\otimes x$.  Then $x$ is unitary.
\end{lemma}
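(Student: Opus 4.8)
The plan is to push the statement down to the reduced level via the reducing morphism $\pi=\pi_\G$, the quotient $*$-homomorphism $C_0^u(\G)\to C_0(\G)$, apply the Proposition there, and then lift the conclusion back up using the identity $(\id\otimes\pi)\Delta(a)=\Ww^*(1\otimes\pi(a))\Ww$ recalled in the excerpt. We may assume $x\neq 0$. Applying $\epsilon\otimes\id$ to $\Delta(x)=x\otimes x$, where $\epsilon$ is the counit of $C_0^u(\G)$ (a bounded character), and using $(\epsilon\otimes\id)\Delta=\id$, we obtain $x=\epsilon(x)x$, hence $\epsilon(x)=1$. The point I would emphasise is that $x$ is then invertible: group-like elements are fixed by the (universal) scaling group $\tau$, so $x$ lies in the domain of the universal antipode $S$, and the antipode relations reduce for the group-like element $x$ to $S(x)x=xS(x)=\epsilon(x)1=1$; since $S(x)$ equals $R(x)$ (the unitary antipode applied to $x$) and so lies in $M(C_0^u(\G))$, the element $x$ is invertible there.

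Now $\pi(x)\in M(C_0(\G))\subseteq L^\infty(\G)$ is invertible, in particular non-zero, and it is group-like because $\pi$ intertwines the coproducts: $\Delta(\pi(x))=(\pi\otimes\pi)\Delta(x)=\pi(x)\otimes\pi(x)$. The Proposition therefore shows that $\pi(x)$ is unitary. Applying $\id\otimes\pi$ to $\Delta(x)=x\otimes x$ and using the displayed identity gives $x\otimes\pi(x)=(\id\otimes\pi)\Delta(x)=\Ww^*(1\otimes\pi(x))\Ww$; multiplying on the right by $1\otimes\pi(x)^*$ yields $x\otimes 1=\Ww^*(1\otimes\pi(x))\Ww(1\otimes\pi(x)^*)$. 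The right-hand side is a product of unitaries — in particular $\Ww=(\id\otimes\pi_{\hh\G})(\WW)$ is unitary, being the image of the unitary $\WW$ under (the strict extension of) a non-degenerate $*$-homomorphism — so $x\otimes 1$ is unitary, whence $x^*x=xx^*=1$; that is, $x$ is unitary.

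I expect the invertibility step to be the genuine obstacle. One cannot deduce $\pi(x)\neq 0$ by formal manipulation of the two displayed identities: since $\pi$ is far from injective in general, any relation written in terms of $\pi(x)$ becomes vacuous once $\pi(x)=0$. What is really used is the structural fact that a group-like element is invertible — equivalently, that it is analytic for (indeed fixed by) the scaling group, with $S(x)=x^{-1}$. The delicate point is thus to verify that $x$ lies in the domain of $S$ and that the antipode relations apply to it; this can be carried out by the same arguments that establish scaling-group invariance of group-like elements in the reduced setting, and it is the only place where structure theory beyond the quoted formula is needed.
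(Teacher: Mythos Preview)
Your overall route matches the paper's exactly: set $y=\pi(x)$, apply the Proposition at the reduced level to get $y$ unitary, then use $x\otimes y=(\id\otimes\pi)\Delta(x)=\Ww^*(1\otimes y)\Ww$ and multiply by $1\otimes y^*$ to see that $x\otimes 1$ is a product of unitaries. The paper's proof is precisely these three lines and does not comment on why $y\neq 0$; you are right that this is the one non-formal point, and the paper simply passes over it.

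Your attempt to secure $y\neq 0$ by first proving that $x$ is invertible is, however, not complete. You assert that a non-zero group-like in $M(C_0^u(\G))$ is fixed by the universal scaling group and that the Hopf-algebra identity $S(x)x=\epsilon(x)1$ then applies, but neither claim is proved, and the suggestion that ``the same arguments'' as in the reduced case carry over is optimistic. In the reduced setting, scaling-group invariance of group-likes comes from weight and modular theory on $L^\infty(\G)$; transporting that to $M(C_0^u(\G))$ would most naturally go through $\pi$, which is exactly the step you are trying to justify, so there is a circularity lurking. Likewise, the universal antipode is only characterised on a core of slices of $\Ww$, and deducing $S(x)x=1$ for an arbitrary group-like multiplier is not automatic. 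So your proof and the paper's agree on the main computation; the difference is that you have correctly isolated the gap, but your proposed patch does not yet close it.
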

\begin{proof}
Let $y = \pi_\G(x) \in M(C_0(\G))$ so $\Delta(y) = y\otimes y$ and hence $y$ is
unitary.  Then we use that
\[ x\otimes y = (\id\otimes\pi_\G)\Delta(x) = \Ww^*(1\otimes y) \Ww \]
and so $x\otimes 1 = (1\otimes y^*)\Ww^*(1\otimes y) \Ww \in M(C_0^u(\G)
\otimes\mc B_0(L^2(\G)))$ is unitary, and so $x$ is unitary.
\end{proof}

The following then states the different, equivalent, definitions of the
intrinsic group, compare \cite[Theorem~3.12]{kn}.  We claim that the following
sets, given the stated topologies, are locally compact groups, and are all
homeomorphic (for maps to be defined shortly):
\begin{enumerate}
\item\label{def:ig:one}
The collection of completely positive, completely isometric isomorphisms
in $C_{cb}^l(L^1(\G))$, with composition as the group product, and the strong
operator topology, denoted by $\tilde\G$;
\item\label{def:ig:two}
The spectrum of the $C^*$-algebra $C_0^u(\G)$, that is, the collection of
non-zero characters on $C_0^u(\G)$ with the relative weak$^*$-topology,
and the product induced by $\Delta$, denoted by $\spec(C_0^u(\G))$;
\item\label{def:ig:three}
The intrinsic group of $L^\infty(\hh\G)$, namely $\Gr(\hh\G) =
\{ \hh u\in L^\infty(\G) : \hh\Delta(\hh u)=\hh u\otimes \hh u, \hh u\not=0 \}$,
with the product from $L^\infty(\hh\G)$, and the relative weak$^*$-topology;
\item\label{def:ig:four}
The ``universal intrinsic group'' of $C_0^u(\G)$, namely
$\Gr_u(\hh\G) = \{ \hh u\in M(C_0^u(\hh\G)) : \hh\Delta(\hh u)=\hh u\otimes
\hh u, \hh u\not=0 \}$ with the product from $M(C_0^u(\hh\G))$, and the
relative strict topology.
\end{enumerate}

Let us note that the ``intrinsic group'' is often defined by requiring that
$\hh u$ be invertible; but by our technical lemma, $\hh u$ is automatically
unitary.  We note that (\ref{def:ig:four}) is a new equivalence not previously
studied.

We now define the maps between these sets.  We choose slightly different
conventions to \cite{kn}, in particular, swap $\hh u$ for $\hh u^*$, as our
conventions seem more natural given the later results.  Given $\hh u\in
\Gr(\hh\G)$, we identify $M(C_0(\hh\G))$ with $M(\mathbb C \otimes C_0(\hh\G))$
and then observe that $(\id\otimes\hh\Delta)(\hh u) = \hh u_{13}\hh u_{12}$.
So by \cite[Proposition~5.3]{kusuni} there
is a non-degenerate $*$-homomorphism $\gamma:C_0^u(\G) \rightarrow \mathbb C$,
that is, $\gamma\in\spec(C_0^u(\G))$, with $u = (\gamma\otimes\id)(\Ww)$.
It is easy to see that this in fact gives a bijection between
$\spec(C_0^u(\G))$ and $\Gr(\hh\G)$.

By pushing things to the universal level, and using $\WW$ and (the dual version
of) \cite[Proposition~6.5]{kusuni}, we also get a bijection between
$\spec(C_0^u(\G))$ and $\Gr_u(\hh\G)$ which identifies $\gamma$ with
$(\gamma\otimes\id)(\WW)$.  Then the strict extension of $\pi:C_0^u(\G)
\rightarrow C_0(\G)$ restricts to a bijection between $\Gr(\hh\G)$ and
$\Gr_u(\hh\G)$.

Finally, the bijection between $\tilde\G$ and $\Gr(\hh\G)$ follows from
\cite[Theorem~4.7]{jnr}, compare \cite[Theorem~3.7]{kn}.  In the remainder
of this section, we shall give an alternative proof, using \cite{daws1}, and
also give a concise proof that the maps constructed are homeomorphisms;
of course, these results are new for $\Gr_u(\hh\G)$.

In the next section, we start to study how morphisms and intrinsic groups
interact.  The following, which is really not made explicit in \cite{kn},
will be vital for that purpose.

\begin{theorem}\label{thm:in_to_mult}
The bijection between $C_{cb}^l(L^1(\G))$ and $M_{cb}^l(L^1(\G)) \subseteq
M(C_0(\hh\G))$ restricts to a bijection between $\tilde\G$ and $\Gr(\hh\G)$.
Furthermore, if $L_*$ and $\hh u$ are thus associated, then $L(x) = \hh u^* x
\hh u$ for $x\in L^\infty(\G)$.
\end{theorem}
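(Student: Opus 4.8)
My plan is to prove the two inclusions separately, reading off the identity $L(x)=\hh u^*x\hh u$ on the way; the bijection between $\tilde\G$ and $\Gr(\hh\G)$ then follows, since it is the restriction of the already-fixed bijection $C_{cb}^l(L^1(\G))\cong M_{cb}^l(L^1(\G))$ and the two constructions are mutually inverse. Throughout I would use the standard facts $\hh W=\sigma(W)^*\in L^\infty(\hh\G)\vnten L^\infty(\G)$, together with the conjugation identities $W(\hh u\otimes1)W^*=\sigma(\hh\Delta(\hh u))$ for $\hh u\in L^\infty(\hh\G)$ and $\hh W(x\otimes1)\hh W^*=\sigma(\Delta(x))$ for $x\in L^\infty(\G)$, both immediate from $\Delta(x)=W^*(1\otimes x)W$ and $W=\sigma(\hh W)^*$.

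\emph{From $\Gr(\hh\G)$ to $\tilde\G$.} Take $\hh u\in\Gr(\hh\G)$; by the preceding Proposition $\hh u$ is a unitary in $M(C_0(\hh\G))$, and $\hh\Delta(\hh u)=\hh u\otimes\hh u$ says $W(\hh u\otimes1)W^*=\hh u\otimes\hh u$. Put $L=\operatorname{Ad}(\hh u^*)$ on $\mc B(L^2(\G))$; the displayed identity rearranges to $(L\otimes\id)(W)=(\hh u^*\otimes1)W(\hh u\otimes1)=(1\otimes\hh u)W$. Slicing the second leg by $\hh\omega\in L^1(\hh\G)$ shows that $L$ sends the weak$^*$-dense set $\{(\id\otimes\hh\omega)(W)\}$ into $L^\infty(\G)$, so by normality $L(L^\infty(\G))\subseteq L^\infty(\G)$; hence $\hh u\in M_{cb}^l(L^1(\G))$, with associated centraliser $L_*$ whose adjoint is $x\mapsto\hh u^*x\hh u$. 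Since $\hh u^*\in\Gr(\hh\G)$ too, the same reasoning gives $\operatorname{Ad}(\hh u)(L^\infty(\G))\subseteq L^\infty(\G)$, so $L$ is a $*$-automorphism of $L^\infty(\G)$; therefore $L_*$ is completely positive and a completely isometric isomorphism, i.e. $L_*\in\tilde\G$.

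\emph{From $\tilde\G$ to $\Gr(\hh\G)$.} Now let $L_*\in\tilde\G$. The adjoint $L$ is surjective, and unital: covariance gives $\Delta(L(1))=L(1)\otimes1$, so $L(1)\in\mathbb C1$ by ``invariants are constant'', and positivity together with the isometry force $L(1)=1$. Both $L$ and $L^{-1}$ are then unital complete isometries, hence completely positive, and applying the Kadison--Schwarz inequality to $L$, then to $L^{-1}$ at the point $L(a)$, then hitting the result with $L$, yields $L(a^*a)=L(a)^*L(a)$ for all $a$; so $L$ is a genuine $*$-automorphism of $L^\infty(\G)$. Let $\hh u\in M(C_0(\hh\G))$ be the multiplier of $L_*$, so $(L\otimes\id)(W)=(1\otimes\hh u)W$ and, since $L$ is $*$-preserving, $(\id\otimes L)(\hh W)=\hh W(\hh u^*\otimes1)$. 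The decisive step is to apply the normal $*$-homomorphism $\id\otimes L$ of $\mc B(L^2(\G))\vnten L^\infty(\G)$ to the identity $\hh W(x\otimes1)\hh W^*=\sigma(\Delta(x))$: the left-hand side becomes $\hh W\big((\hh u^*x\hh u)\otimes1\big)\hh W^*$, while the right-hand side, by covariance, becomes $\sigma(\Delta(L(x)))=\hh W(L(x)\otimes1)\hh W^*$; cancelling $\hh W$ gives $L(x)=\hh u^*x\hh u$, which is the ``Furthermore'' assertion. Substituting this back into $(L\otimes\id)(W)=(1\otimes\hh u)W$ produces $W(\hh u\otimes1)W^*=\hh u\otimes\hh u$ (in particular $\hh u$ is unitary, as $L\otimes\id$ carries the unitary $W$ to $(1\otimes\hh u)W$), i.e. $\hh\Delta(\hh u)=\hh u\otimes\hh u$, so $\hh u\in\Gr(\hh\G)$.

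I expect the second inclusion to be the main obstacle: one has to promote ``$L_*$ is a completely positive, completely isometric isomorphism'' to ``$L$ is a $*$-homomorphism'' (the double Kadison--Schwarz argument) and then squeeze out the group-like relation, the latter being essentially careful bookkeeping of which tensor leg lies in $L^\infty(\G)$ versus $L^\infty(\hh\G)$, via $\hh W\in L^\infty(\hh\G)\vnten L^\infty(\G)$ and the two conjugation formulas. A more computational alternative for this direction, presumably the one alluded to through \cite{daws1}, would be: a completely positive centraliser comes from a state $\mu$ on $C_0^u(\G)$ with multiplier $(\mu\otimes\id)(\WW)$, and the isomorphism hypothesis forces $\mu$ to be a character of $C_0^u(\G)$, whence its multiplier is group-like.
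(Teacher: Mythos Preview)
Your proof is correct, and the ``from $\Gr(\hh\G)$ to $\tilde\G$'' direction matches the closing paragraph of the paper's argument. The other direction, however, is genuinely different from what the paper does.

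The paper invokes the main theorem of \cite{daws1}: a completely positive left centraliser is implemented by left convolution with a positive functional $\mu\in C_0^u(\G)^*$. One then obtains $\mu^{-1}$ for $L_*^{-1}$, shows $\mu\star\mu^{-1}=\mu^{-1}\star\mu=\epsilon$, and proves that $T=(\mu\otimes\id)\Delta$ is a $*$-automorphism of $C_0^u(\G)$ via the Schwarz inequality and multiplicative domains; since $\mu=\epsilon\circ T$, it is a character, and $\hh u=(\mu\otimes\id)(\Ww)$ is group-like. Your closing paragraph correctly guesses this route.

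Your argument instead stays entirely at the $L^\infty(\G)$ level: you run the Kadison--Schwarz/multiplicative-domain trick directly on $L$ (rather than on $T$), and then extract $L(x)=\hh u^*x\hh u$ by applying the normal $*$-homomorphism $\id\otimes L$ to the identity $\hh W(x\otimes1)\hh W^*=\sigma(\Delta(x))$ in $\mc B(L^2(\G))\vnten L^\infty(\G)$. This is a clean manoeuvre and avoids importing the representation theorem from \cite{daws1}, making the proof more self-contained. The trade-off is that the paper's approach yields, as a by-product, the character $\gamma=\mu\in\spec(C_0^u(\G))$ explicitly, which feeds directly into the other equivalent descriptions of the intrinsic group; your route produces $\hh u\in\Gr(\hh\G)$ first and one would then recover $\gamma$ from it. Both approaches share the same essential idea (forcing multiplicativity from the two-sided Schwarz inequality), applied at different levels of the theory.
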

\begin{proof}
We use the main result of \cite{daws1}, which tells us that there is a natural
bijection between \emph{completely positive} multipliers of $L^1(\G)$ and
$C_0^u(\G)^*_+$.  Indeed, for such $L_*$ there is $\mu\in C_0^u(\G)^*$ positive
such that, embedding $L^1(\G)$ into $C_0^u(\G)^*$, we have that $L_*$ is given
by left multiplication by $\mu$.  That is,
\[ \omega\circ L\circ\pi = (\mu\otimes\omega\circ\pi)\Delta
= (\mu\otimes\omega)(\Ww^*(1\otimes\pi(\cdot))\Ww). \]

If $L_* \in \tilde\G$ then $L_*$ is a completely isometric isomorphism,
so there exists a completely isometric $L_*^{-1}$.  Thus $L_*^{-1}$ is also
completely positive, and it is easy to see that $L_*^{-1}$ is also a left
multiplier, compare the proof of \cite[Theorem~4.7]{jnr}.  So choose $\mu^{-1}
\in C_0^u(\G)^*$ for $L_*^{-1}$.  Both $L$ and $L^{-1}$ must be unital,
completely positive, and so $\mu,\mu^{-1}$ are states.

For $x\in L^\infty(\G), \omega\in L^1(\G)$,
\begin{align*} \ip{x}{\omega}
&= \ip{L^{-1}(L(x))}{\omega}
= \ip{\mu^{-1}\otimes\omega}{\Ww^*(1\otimes L(x))\Ww} \\
&= \ip{\mu\otimes\mu^{-1}\otimes\omega}{\Ww^*_{23}\Ww^*_{13}(1\otimes 1\otimes x)
   \Ww_{13}\Ww_{23}} \\
&= \ip{(\mu^{-1}\otimes\mu)\Delta\otimes\omega}{\Ww^*(1\otimes x)\Ww},
\end{align*}
as $\Ww$ is a corepresentation of $C_0^u(\G)$.  Apply this to
$x = (\id\otimes\hh\omega)(W)$ to see that
\[ x = (\mu^{-1}\star\mu\otimes\id)(\Ww^*(1\otimes x)\Ww)
= (\mu^{-1}\star\mu\otimes\id\otimes\hh\omega)(\Ww^*_{12}W_{23}\Ww_{12}). \]
As $\Ww^*_{12}W_{23}\Ww_{12} = ((\id\otimes\pi)\Delta\otimes\id)(\Ww)
= \Ww_{13} W_{23}$ it follows that
\[ (\id\otimes\hh\omega)(W) =
(\mu^{-1}\star\mu\otimes\id\otimes\hh\omega)(\Ww_{13} W_{23}), \]
and so $W = (\mu^{-1}\star\mu\otimes\id\otimes\id)(\Ww_{13} W_{23})$
and so $(\mu^{-1}\star\mu\otimes\id)(\Ww) = 1$.  Similarly
$(\mu\star\mu^{-1}\otimes\id)(\Ww) = 1$.  By \cite[Proposition~6.3]{kusuni}
and its proof, it follows that $\mu^{-1}\star\mu = \mu\star\mu^{-1} = \epsilon$
the counit of $C_0^u(\G)$.

Let $T = (\mu\otimes\id)\Delta : C_0^u(\G)\rightarrow C_0^u(\G)$, a unital
completely positive map.  That $T$ maps into $C_0^u(\G)$, and not $M(C_0^u(\G))$,
follows by observing that $\{ (\id\otimes\omega)(\Ww) : \omega\in L^1(\hh\G) \}$
is norm dense in $C_0^u(\G)$, see the discussion after
\cite[Proposition~5.1]{kusuni}, and then calculating that
\[ T\big( (\id\otimes\omega)(\Ww) \big)
= (\mu\otimes\id\otimes\omega)(\Ww_{13} \Ww_{23})
= (\id\otimes\omega a)(\Ww) \in C_0^u(\G), \]
where $a = (\mu\otimes\id)(\Ww) \in L^\infty(\hh\G)$.  We similarly form $T^{-1}$,
and observe that $T^{-1}$ is the inverse of $T$.  Indeed,
\begin{align*} T^{-1}(T(x)) &=
(\mu^{-1}\otimes\id)\Delta\big( (\mu\otimes\id)\Delta(x) \big)
= (\mu^{-1}\otimes\mu\otimes\id)\Delta^2(x) \\
&= ((\mu^{-1}\otimes\mu)\Delta\otimes\id)\Delta(x)
= (\epsilon\otimes\id)\Delta(x) = x. \end{align*}

We now use the Schwarz inequality, and the theory of multiplicative domains,
for completely positive maps, see \cite[Proposition~1.5.7]{bo} for example.
For $a\in C_0^u(\G)$,
\[ a^*a = T^{-1}(T(a))^* T^{-1}(T(a))
\leq T^{-1}(T(a)^*T(a)) \leq T^{-1}(T(a^*a)) = a^*a, \]
and so we have equality throughout, namely $a^*a = T^{-1}(T(a)^*T(a))$
or equivalently, $T(a^*a) = T(a)^* T(a)$.  Similarly we can show that
$T(aa^*) = T(a) T(a)^*$, and it hence follows that $T(ab) = T(a)T(b)$ for all
$a,b\in C_0^u(\G)$.  Thus $T$ is a $*$-automorphism of $C_0^u(\G)$.
As $\mu = \epsilon\circ T$ it follows that $\mu$ is a character.

Then let $\hh u = (\mu\otimes\id)(\Ww)$ so $\hh\Delta(\hh u)
= (\mu\otimes\id\otimes\id)(\Ww_{13} \Ww_{12})
= \hh u\otimes\hh u$ as $\mu$ is multiplicative, so $\hh u\in \Gr(\hh\G)$.
Then, for $x\in L^\infty(\G)$,
\[ L(x) = (\mu\otimes\id)(\Ww^*(1\otimes x)\Ww)
= \hh u^* x \hh u, \]
as claimed.  Finally, that $\hh\Delta(\hh u) = \hh u\otimes \hh u$ is
equivalent to $\hh W^*(1\otimes \hh u) \hh W = \hh u\otimes \hh u$ or
equivalently that $W(\hh u\otimes 1)W^* = \hh u\otimes \hh u$, and so
\[ (L\otimes\id)(W) = (\hh u^*\otimes 1)W(\hh u\otimes 1) 
= (\hh u^*\otimes 1)(\hh u\otimes \hh u) W
= (1\otimes\hh u)W, \]
and so $\hh u\in M_{cb}^l(L^1(\G))$ is associated to $L$ as required.
\end{proof}

Notice that the previous proof did not use weight theory (and that neither
does \cite{daws1}).  We now show that our maps are homeomorphisms: this is
a new result for the equivalence with $\Gr_u(\hh\G)$, and for completeness,
we give a complete proof.  As $\tilde\G$ is easily seen to be a topological
group, \cite[Proposition~3.5]{kn}, and $\spec(C_0^u(\G))$ is locally compact,
it follows that the intrinsic group is indeed a locally compact group.
Our proof will avoid use of weight theory, standard position of von Neumann
algebras etc., compare the proof of \cite[Theorem~3.7]{kn}.

\begin{theorem}
The bijections between our four equivalent conditions are homeomorphisms.
\end{theorem}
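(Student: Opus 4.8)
The plan is to verify the homeomorphism claim for the four bijections
\[
\tilde\G \;\longleftrightarrow\; \spec(C_0^u(\G)) \;\longleftrightarrow\; \Gr(\hh\G)
\;\longleftrightarrow\; \Gr_u(\hh\G)
\]
one edge at a time, always checking continuity in both directions and exploiting
that all four spaces are locally compact groups, so that it suffices to prove
continuity at the identity (and, for the compact-open style issues, to prove
continuity of bijective group homomorphisms, which need only be checked to be
continuous). Since each map is a bijective group homomorphism between locally
compact \emph{Hausdorff} groups, if both it and its inverse are shown to be
continuous on a neighbourhood of the identity, we are done; in practice I would
just prove sequential/net continuity directly from the defining formulas.

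First I would treat the edge $\spec(C_0^u(\G)) \leftrightarrow \Gr_u(\hh\G)$,
given by $\gamma \mapsto (\gamma\otimes\id)(\WW)$. If $\gamma_\alpha \to \gamma$
weak$^*$ in $\spec(C_0^u(\G))$, then for every $a\in C_0^u(\hh\G)$ and every
$\mu\in C_0^u(\hh\G)^*$ we have
$\mu\big( (\gamma_\alpha\otimes\id)(\WW)\,a \big)
= (\gamma_\alpha\otimes \mu(\,\cdot\, a))(\WW) \to (\gamma\otimes\mu(\,\cdot\,a))(\WW)$
because $(\id\otimes\mu(\,\cdot\,a))(\WW) \in C_0^u(\G)$; together with the
analogous computation on the other side this gives strict convergence of
$(\gamma_\alpha\otimes\id)(\WW)$ to $(\gamma\otimes\id)(\WW)$, so the map is
continuous. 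For the converse, if $(\gamma_\alpha\otimes\id)(\WW) \to
(\gamma\otimes\id)(\WW)$ strictly, then applying any $\omega\in L^1(\hh\G)$ (so
that $(\id\otimes\omega)(\WW)$ runs over a norm-dense subset of $C_0^u(\G)$)
recovers $\gamma_\alpha \to \gamma$ pointwise on a dense subalgebra, hence
everywhere by uniform boundedness. The edge $\Gr(\hh\G)\leftrightarrow
\Gr_u(\hh\G)$ is handled identically, but using $\Ww = (\id\otimes\hh\pi)(\WW)$
in place of $\WW$, together with the fact that $\pi_{\hh\G}$ and its strict
extension are strictly continuous on bounded sets; the passage
$\spec(C_0^u(\G)) \leftrightarrow \Gr(\hh\G)$ via
$\gamma\mapsto (\gamma\otimes\id)(\Ww)$ then also follows, being the composite.

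The genuinely more delicate edge is $\tilde\G \leftrightarrow \Gr(\hh\G)$, given
(by Theorem~\ref{thm:in_to_mult}) by $L_* \leftrightarrow \hh u$ with
$L(x) = \hh u^* x\hh u$, where $\tilde\G$ carries the strong operator topology
on $\mc{CB}(L^\infty(\G))$ and $\Gr(\hh\G)$ the relative weak$^*$-topology of
$L^\infty(\hh\G)$. If $\hh u_\alpha \to \hh u$ weak$^*$, then since the
$\hh u_\alpha$ are unitaries, a standard fact gives $\hh u_\alpha \to \hh u$
$*$-strongly, hence $L_\alpha(x) = \hh u_\alpha^* x \hh u_\alpha \to \hh u^* x
\hh u = L(x)$ $*$-strongly for each fixed $x$, which is convergence in the SOT
on $\mc{CB}(L^\infty(\G))$; so $\hh u\mapsto L_*$ is continuous. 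For the
converse one must recover $\hh u$ from $L$: here I would use that, from the
multiplier relation $(L\otimes\id)(W) = (1\otimes\hh u)W$ established in
Theorem~\ref{thm:in_to_mult}, $\hh u$ is a slice $\hh u = (\omega\otimes\id)(W)$
up to a reference functional applied after $(L\otimes\id)$, so that SOT
convergence $L_\alpha \to L$ forces $(L_\alpha\otimes\id)(W)\to (L\otimes\id)(W)$
in the appropriate topology and hence $\hh u_\alpha \to \hh u$ weak$^*$; more
cleanly, since $\lambda:L^1(\G)\to C_0(\hh\G)$ has weak$^*$-dense range in
$L^\infty(\hh\G)$ and $\hh u\,\lambda(\omega) = \lambda(L_*(\omega))$, weak
convergence of $L_*$ on $L^1(\G)$ pins down $\hh u$ on a weak$^*$-dense
subspace, and boundedness (all $\hh u_\alpha$ are unitaries) upgrades this to
weak$^*$-convergence of $\hh u_\alpha$. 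The main obstacle is precisely this
last point: matching the strong operator topology on the centraliser side with
the weak$^*$-topology on the multiplier side, which works only because the net
lives among unitaries (so weak$^*$ and $*$-strong agree) and because of the
density of the range of $\lambda$; I would make sure to state and use the
unitarity input explicitly. Once all four edges are shown to be homeomorphisms,
transitivity finishes the proof, and (as already remarked before the statement)
identifying $\tilde\G$ or $\spec(C_0^u(\G))$ as a topological/locally compact
group transports that structure across, confirming the intrinsic group is a
locally compact group.
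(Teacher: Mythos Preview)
Your overall plan is sound and very close to the paper's, but there is one genuine slip. You identify the topology on $\tilde\G$ as ``the strong operator topology on $\mc{CB}(L^\infty(\G))$''. It is not: by definition $\tilde\G \subseteq C_{cb}^l(L^1(\G)) \subseteq \mc{CB}(L^1(\G))$, and the strong operator topology means $L_{\alpha,*}(\omega)\to L_*(\omega)$ in the \emph{norm} of $L^1(\G)$ for each $\omega$. Consequently your argument for the direction $\Gr(\hh\G)\to\tilde\G$ does not land as written: from $\hh u_\alpha\to\hh u$ $*$-strongly you deduce $L_\alpha(x)\to L(x)$ $*$-strongly in $\mc B(L^2(\G))$, but this is neither norm convergence in $L^\infty(\G)$ (so not SOT in $\mc{CB}(L^\infty(\G))$) nor, directly, SOT in $\mc{CB}(L^1(\G))$. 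The correct move is to dualise: $*$-strong convergence of the unitaries $\hh u_\alpha$ gives $\|\omega_{\hh u_\alpha\xi,\hh u_\alpha\eta}-\omega_{\hh u\xi,\hh u\eta}\|\to 0$ in $\mc B(L^2(\G))_*$, hence $L_{\alpha,*}(\omega_{\xi,\eta})=\hh u_\alpha\,\omega_{\xi,\eta}\,\hh u_\alpha^*\to L_*(\omega_{\xi,\eta})$ in $L^1(\G)$, and then boundedness plus density finishes. This is exactly what the paper does; you had all the ingredients (unitarity, upgrade from weak$^*$ to $*$-strong) but used them at the wrong level.

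A smaller point: for $\tilde\G\to\Gr(\hh\G)$ you argue via $\hh u_\alpha\lambda(\omega)=\lambda(L_{\alpha,*}(\omega))\to\lambda(L_*(\omega))=\hh u\,\lambda(\omega)$, which gives one-sided strict convergence on a dense set of $C_0(\hh\G)$. To conclude weak$^*$ convergence in $L^\infty(\hh\G)$ you should say explicitly that $C_0(\hh\G)L^1(\hh\G)$ is dense in $L^1(\hh\G)$ (or, as the paper does, also run the argument for $\hh u_\alpha^*$ using $L_{\alpha,*}^{-1}\to L_*^{-1}$, which gives genuine strict convergence). Structurally, note that the paper is more economical: it shows $\spec(C_0^u(\G))\leftrightarrow\Gr(\hh\G)$ directly, and for the remaining three spaces proves the \emph{cycle} of continuous bijections $\tilde\G\to\Gr_u(\hh\G)\to\Gr(\hh\G)\to\tilde\G$, which forces each arrow to be a homeomorphism without checking both directions of every edge.
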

\begin{proof}
Firstly, the map $\spec(C_0^u(\G))\rightarrow\Gr(\hh\G); \gamma\mapsto
(\gamma\otimes\id)(\Ww)$ is a homeomorphism.  This follows, as $\gamma_i
\rightarrow\gamma$ is equivalent to $\lim_i \gamma_i(a) = \gamma(a)$ for
all $a\in C_0^u(\G)$ of the form $(\id\otimes\hh\omega)(\Ww)$ for $\hh\omega
\in L^1(\hh\G)$, as such $a$ are norm dense, and $(\gamma_i)$ is a bounded net.
However, this is clearly equivalent to $(\gamma_i\otimes\id)(\Ww)
\rightarrow (\gamma\otimes\id)(\Ww)$ weak$^*$ in $L^\infty(\hh\G)$, as required.

We next show that $\tilde\G \rightarrow \Gr_u(\hh\G)$ is continuous.  
Let the (bounded) net $(L_{i,*}) \subseteq \tilde\G\subseteq C_{cb}^l(L^1(\G))$
converge strongly to $L_*$, and be associated to $\hh u_i\in\Gr_u(\hh\G)$, with
$L_*$ associated to $\hh u$.  As $(\hh u_i)$ is a net of unitaries, to show that
$\hh u_i a \rightarrow \hh u a$ for each $a\in C_0^u(\hh\G)$, it suffices to
check for a dense collection of such $a$, for example,
$a = (\omega\otimes\id)(\wW)$ for $\omega\in L^1(\G)$.  However,
\begin{align*} \lim_i \hh u_i (\omega\otimes\id)(\wW)
&= \lim_i (\omega\otimes\id)( (1\otimes\hh u_i)\wW )
= \lim_i (\omega\otimes\id)(L_i\otimes\id)(\wW) \\
&= \lim_i (L_{i,*}(\omega)\otimes\id)(\wW)
= (L_{*}(\omega)\otimes\id)(\wW)
= \hh u (\omega\otimes\id)(\wW),
\end{align*}
as required.  We similarly need to show that $\hh u_i^* (\omega\otimes\id)(\wW)
\rightarrow \hh u^* (\omega\otimes\id)(\wW)$ for each $\omega$.
However, as $\hh u_i^* = \hh u_i^{-1}$, this claim will follow because
$L_{i,*}^{-1} \rightarrow L_*^{-1}$.  Thus $\hh u_i\rightarrow \hh u$ strictly,
as claimed.

That $\Gr_u(\hh\G) \rightarrow \Gr(\hh\G)$ is continuous follows easily, as
$\pi$ is strictly continuous, and strict convergence in $M(C_0(\hh\G))$ implies
weak$^*$-convergence in $L^\infty(\hh\G)$.

Finally we show that $\Gr(\hh\G) \rightarrow \tilde\G$ is continuous.
Continuing with the same notation, suppose that $\hh u_i\rightarrow\hh u$
weak$^*$ in $\Gr(\hh\G)$.
As each $\hh u_i$ is unitary, this implies that actually $\hh u_i
\rightarrow \hh u$ strongly, in $\mc B(L^2(\G))$.  This then implies that for
all $\xi,\eta\in L^2(\G)$,
\[ \lim_i \hh u_i\omega_{\xi,\eta} \hh u_i^* =
\lim_i \omega_{\hh u_i\xi,\hh u_i\eta}
= \omega_{\hh u\xi,\hh u\eta} = \hh u\omega_{\xi,\eta} \hh u^* \]
in $\mc B(L^2(\G))_*$ and hence also in $L^1(\G)$.  However, as
$L_i(x) = \hh u_i^* x \hh u_i$, this shows that $L_{i,*}(\omega_{\xi,\eta})
\rightarrow L_*(\omega_{\xi,\eta})$ in $L^1(\G)$.  As $(L_{i,*})$ is a
bounded net, it follows that $L_{i,*}\rightarrow L_*$ strongly, as required.
\end{proof}

\section{The Intrinsic Group functor}\label{sec:igf}

In this section, we shall show that the assignment $\G \rightarrow \tilde\G$
is actually a functor.  Given the results of Section~\ref{sec:mult_mor}, we
have little choice as to how a morphism $\G\rightarrow\H$ should map
$\tilde\G$ to $\tilde\H$, as $\tilde\G$ is realised as a subset of the
multipliers of $L^1(\G)$.  Fortunately, this works!

\begin{theorem}
Let $f:\G\rightarrow\H$ be a morphism of quantum groups, which induces the
completely contractive homomorphism $C_{cb}^l(L^1(\G)) \rightarrow
C_{cb}^l(L^1(\H))$, as before.  This restricts to a continuous group
homomorphism $\tilde f:\tilde\G \rightarrow \tilde \H$.  The assignment
$f\mapsto\tilde f$ is a functor.

Indeed, let the morphism be represented by
$\phi:C_0^u(\H)\rightarrow M(C_0^u(\G))$ and
$\beta:C_0(\H)\rightarrow M(C_0(\G)\otimes C_0(\H))$, with dual
counterparts $\hh\phi$ and $\hh\beta$.
Let $L_*\in \tilde\G$ be associated to $\gamma\in\spec(C_0^u(\G)),
\hh u\in\Gr(\hh\G)$ and $\hh u_u \in \Gr_u(\hh\G)$, and let $L_*$ be mapped
to $L_*' \in \tilde\H$, associated to $\gamma', \hh u', \hh u_u'$.  Then
we have the following relations:
\begin{itemize}
\item $\hh u_u' = \hh\phi(\hh u_u)$;
\item $\hh\beta(\hh u) = \hh u' \otimes \hh u$;
\item $\beta L' = (L\otimes\id)\beta$;
\item $\gamma' = \gamma\circ\phi$.
\end{itemize}
\end{theorem}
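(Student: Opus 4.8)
The strategy is to carry out everything in the character picture $\spec(C_0^u(\G))$, where a morphism acts by the transparent formula $\gamma\mapsto\gamma\circ\phi$. First I would recall, from the proof of Theorem~\ref{thm:in_to_mult}, that $L_*\in\tilde\G$ is exactly the centraliser obtained, via the embedding $L^1(\G)\hookrightarrow C_0^u(\G)^*$, from left multiplication by its associated character $\gamma\in\spec(C_0^u(\G))$, and that its associated universal multiplier is $\hh u_u=(\gamma\otimes\id)(\WW_\G)$. By Proposition~\ref{prop:comm_diag} with $\mu=\gamma$, the morphism sends $L_*$ to the centraliser $L'_*$ induced by $\phi^*(\gamma)=\gamma\circ\phi=:\gamma'$, with associated universal multiplier $\hh\phi(\hh u_u)$. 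Now $\gamma'$ is a non-zero $*$-homomorphism $C_0^u(\H)\to\mathbb C$, being the composition of the non-degenerate $*$-homomorphism $\phi$ with (the strict extension of) the character $\gamma$; hence $\gamma'\in\spec(C_0^u(\H))$, which is the fourth listed relation. Since $\gamma'$ is a character, $L'_*$ is the adjoint of conjugation by the unitary $\hh u':=(\gamma'\otimes\id)(\Ww_\H)\in\Gr(\hh\H)$, in particular a completely positive completely isometric isomorphism, so $L'_*\in\tilde\H$. Finally $\hh\phi$ is a Hopf $*$-homomorphism and so intertwines $\hh\Delta^u_{\hh\G}$ with $\hh\Delta^u_{\hh\H}$; thus from $\hh\Delta^u_{\hh\G}(\hh u_u)=\hh u_u\otimes\hh u_u$ we obtain $\hh\Delta^u_{\hh\H}(\hh\phi(\hh u_u))=\hh\phi(\hh u_u)\otimes\hh\phi(\hh u_u)$, so that $\hh\phi(\hh u_u)$ is the element of $\Gr_u(\hh\H)$ associated to $L'_*$. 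This gives $\hh u_u'=\hh\phi(\hh u_u)$, the first listed relation, and reducing, $\hh u'=\pi_{\hh\H}(\hh u_u')$.

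The second relation follows from the linking identity for the dual quantum group homomorphism, $\hh\beta\circ\pi_{\hh\G}=(\pi_{\hh\H}\hh\phi\otimes\pi_{\hh\G})\hh\Delta^u_{\hh\G}$. Using $\hh u=\pi_{\hh\G}(\hh u_u)$ and $\hh\Delta^u_{\hh\G}(\hh u_u)=\hh u_u\otimes\hh u_u$, this gives
\[ \hh\beta(\hh u)=(\pi_{\hh\H}\hh\phi\otimes\pi_{\hh\G})(\hh u_u\otimes\hh u_u)
=\pi_{\hh\H}\big(\hh\phi(\hh u_u)\big)\otimes\pi_{\hh\G}(\hh u_u)=\hh u'\otimes\hh u. \]
The third relation $\beta L'=(L\otimes\id)\beta$ is then immediate from Corollary~\ref{corr:for_coactions}, applied to $L_*\in\tilde\G\subseteq C_{cb}^l(L^1(\G))$ and its image $L'_*\in\tilde\H\subseteq C_{cb}^l(L^1(\H))$.

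It remains to see that $f\mapsto\tilde f$ is a functor and that each $\tilde f$ is a continuous group homomorphism; with the identification $\tilde f(\gamma)=\gamma\circ\phi$ in hand these are all routine. The identity morphism of $\G$ has Hopf $*$-homomorphism $\id_{C_0^u(\G)}$, so it induces the identity of $\tilde\G$, and for $f:\G\to\H$, $g:\H\to\K$ one has $\phi_{gf}=\phi_f\circ\phi_g$, whence $\widetilde{gf}(\gamma)=\gamma\circ\phi_f\circ\phi_g=\tilde g(\tilde f(\gamma))$. As $\phi$ intertwines the coproducts, $\tilde f\big((\gamma_1\otimes\gamma_2)\Delta^u_\G\big)=(\gamma_1\otimes\gamma_2)\Delta^u_\G\phi=(\gamma_1\circ\phi\otimes\gamma_2\circ\phi)\Delta^u_\H$, so $\tilde f$ respects the group products. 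For continuity I would pass to the homeomorphic model $\Gr_u(\hh\G)$, on which $\tilde f$ is the restriction of the strict extension of $\hh\phi$; since that extension is strictly continuous on bounded sets and $\Gr_u(\hh\G)$ consists of unitaries, $\tilde f$ is continuous. (Alternatively, a weak$^*$-convergent net of characters converges strictly on the multiplier algebra, by multiplicativity.) The only genuinely delicate point is the very first step — verifying that the image centraliser $L'_*$ really lands in $\tilde\H$, i.e.\ is completely positive and completely isometric — and this is exactly what is secured by noting that $\gamma'=\gamma\circ\phi$ is again a character.
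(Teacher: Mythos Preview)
Your proof is correct and covers all the claims, but your organising principle differs from the paper's. The paper works primarily in the universal multiplier picture $\Gr_u(\hh\G)$: it invokes Theorem~\ref{thm:main_mor} directly, observes that the Hopf $*$-homomorphism $\hh\phi$ sends group-likes to group-likes, and reads off functoriality and the group homomorphism property from the fact that $\hh\phi$ is a multiplicative map between multiplier algebras; the relation $\gamma'=\gamma\circ\phi$ is then deduced at the end from Proposition~\ref{prop:comm_diag}. You instead take the character picture $\spec(C_0^u(\G))$ as primary, establish $\gamma'=\gamma\circ\phi$ first via Proposition~\ref{prop:comm_diag}, and use the fact that $\gamma'$ is again a character to conclude $L'_*\in\tilde\H$; the relation $\hh u_u'=\hh\phi(\hh u_u)$ then falls out. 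For the second relation $\hh\beta(\hh u)=\hh u'\otimes\hh u$, the paper performs a bicharacter computation using Lemma~\ref{lem:cent_bichar} and the explicit form $L(x)=\hh u^*x\hh u$ from Theorem~\ref{thm:in_to_mult}, whereas you use the (strict extension of the) defining identity $\hh\beta\pi_{\hh\G}=(\pi_{\hh\H}\hh\phi\otimes\pi_{\hh\G})\hh\Delta^u_{\hh\G}$ applied to the group-like $\hh u_u$. Both routes are clean; yours is perhaps more transparent for the second relation and makes the ``delicate point'' (that $L'_*$ lands in $\tilde\H$) explicit, while the paper's is more uniform with the general multiplier machinery of Section~\ref{sec:mults}.
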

\begin{proof}
By definition, the map $C_{cb}^l(L^1(\G)) \rightarrow
C_{cb}^l(L^1(\H))$ is induced by the restriction of $\hh\phi$ to a map
$M_{cb}^{l,u}(L^1(\G)) \rightarrow M_{cb}^{l,u}(L^1(\H))$, see
Theorem~\ref{thm:main_mor}.  As $\hh\phi$ is a Hopf $*$-homomorphism,
it's clear that $\hh\phi(\hh u_u) \in \Gr_u(\hh\H)$ for each $\hh u_u \in
\Gr_u(\hh\G)$.  So we do obtain a map $\tilde f:\tilde\G \rightarrow \tilde \H$.
As the product on $\Gr_u(\hh\G)$ is simply the restriction of the product
on $M(C_0^u(\G))$, and as $\hh\phi$ is a homomorphism, the map
$\tilde f:\tilde\G \rightarrow \tilde \H$ is a group homomorphism, clearly
continuous.  Finally, because composition of morphisms is given by
composition of the associated Hopf $*$-homomorphisms, it is clear that
$f\mapsto\tilde f$ is a functor, namely that if $h=g\circ f$ then
$\tilde h = \tilde g \circ \tilde f$.

By Lemma~\ref{lem:cent_bichar} and Theorem~\ref{thm:in_to_mult},
we then see that
\[ (1\otimes \hh u')U = (L\otimes\id)(U) = (\hh u^*\otimes 1)U(\hh u\otimes 1) \]
and so, as $\hh U = \sigma(U^*)$, it follows that
\[ (\hh u'\otimes 1)\hh U^* = (1\otimes \hh u^*)\hh U^*(1\otimes\hh u), \]
which in turn implies that
\[ \hh\beta(\hh u) = \hh U^*(1\otimes\hh u)\hh U
= \hh u'\otimes\hh u, \]
as claimed.

That $\beta L' = (L\otimes\id)\beta$ follows immediately from
Corollary~\ref{corr:for_coactions}, and that $\gamma' = \gamma\circ\phi$
follows immediately from Proposition~\ref{prop:comm_diag}.
\end{proof}

\subsection{Universal property}\label{sec:uni_prop}

In this section, we shall construct a morphism $\tilde\G \rightarrow \G$,
and show that $\tilde\G$ satisfies a natural universal property.  We then
draw some category theoretic conclusions.  In the next section, we shall show
that actually $\tilde\G$ is a ``closed quantum subgroup'' of $\G$.

Let us view $\tilde\G$ as being $\Gr_u(\hh\G)$ with the strict topology.
Thus the formal identity map $\tilde\G \rightarrow M(C_0^u(\hh\G))$ is a
continuous homomorphism, when $M(C_0^u(\hh\G))$ carries the strict topology.
If $C_0^u(\hh\G)\subseteq\mc B(H)$ is a faithful non-degenerate
$*$-representation, then $M(C_0^u(\G))\subseteq\mc B(H)$ as well, and the
induced map $\tilde\G \rightarrow\mc B(H)$ will be a strongly continuous
unitary representation.  By the universal property of $C^*(\tilde\G)$,
we hence obtain a $*$-homomorphism $C^*(\tilde\G) \rightarrow \mc B(H)$; and
one can show that this takes values in $M(C_0^u(\hh\G))$.  So we obtain
a non-degenerate $*$-homomorphism $\hh\theta:C^*(\tilde\G) \rightarrow
M(C_0^u(\hh\G))$.  The strict extension of this map sends an element of
$\tilde\G$ to its image in $\Gr_u(\hh\G)$, and thus by the definition of the
coproduct on $C^*(\tilde\G)$, we see that $\hh\theta$ is a Hopf
$*$-homomorphism.  Hence we have a morphism $\hh\G \rightarrow \hh{\tilde\G}$
and so by duality, the claimed morphism $\tilde\G \rightarrow \G$.

We constructed this morphism from what might be called a group
representation perspective.  The following shows that it also has an extremely
natural interpretation at the $C^*$-algebra level.

\begin{proposition}\label{prop:gelfand}
Let the morphism $\tilde\G \rightarrow \G$ induce the Hopf $*$-homomorphism
$\theta: C_0^u(\G) \rightarrow M(C_0(\tilde\G))$.  Then $\theta$ maps into
$C_0(\tilde\G)$, and is surjective.  Viewing $\tilde\G$ as $\spec(C_0^u(\G))$,
the map $\theta$ is nothing but the Gelfand transform.
\end{proposition}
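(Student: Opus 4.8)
The plan is to identify the Hopf $*$-homomorphism $\theta:C_0^u(\G)\to M(C_0(\tilde\G))$ explicitly and then recognise it as the Gelfand transform. First I would recall that, by the construction of the morphism $\tilde\G\to\G$ given just above, $\theta$ is the dual of the Hopf $*$-homomorphism $\hh\theta:C^*(\tilde\G)\to M(C_0^u(\hh\G))$, which in turn is obtained by integrating the strongly continuous unitary representation $\tilde\G\to M(C_0^u(\hh\G))$ sending $\gamma\in\spec(C_0^u(\G))$ to the element $\hh u_u(\gamma)=(\gamma\otimes\id)(\WW)\in\Gr_u(\hh\G)$. Using the duality relation $(\phi\otimes\id)(\WW_\H)=(\id\otimes\hh\phi)(\WW_\G)$ from the morphisms section, applied to $\phi=\hh\theta$ reversed appropriately, the map $\theta$ is characterised by
\[ (\id\otimes\hh\theta)(\WW_{C_0^u(\G)}) \;=\; (\theta\otimes\id)(\WW_{\tilde\G}), \]
where $\WW_{\tilde\G}\in M(C_0(\tilde\G)\otimes C^*(\tilde\G))$ is the fundamental unitary of the classical group $\tilde\G$, i.e.\ the function $(\gamma,\cdot)\mapsto\gamma$ regarded as a $C^*(\tilde\G)$-valued function on $\tilde\G$. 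Slicing the right-hand leg against a state/functional, and using that $\hh\theta$ sends $\gamma\in\tilde\G$ (viewed in $C^*(\tilde\G)$) to $\hh u_u(\gamma)$, one reads off that for $a\in C_0^u(\G)$ the element $\theta(a)\in M(C_0(\tilde\G))$ is the bounded function
\[ \theta(a)\colon \gamma \longmapsto \gamma(a) \qquad (\gamma\in\spec(C_0^u(\G))). \]
This is precisely the Gelfand transform of $a$, so the identification will follow once the function is shown to lie in $C_0(\tilde\G)$ (not merely $C_b$) and the map is shown to be surjective.

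For the $C_0$ claim: by general $C^*$-theory the Gelfand transform maps $C_0^u(\G)$ into $C_0(\spec(C_0^u(\G)))$, since $\spec(C_0^u(\G))$ with its weak$^*$-topology is exactly the spectrum of the (possibly non-unital) $C^*$-algebra $C_0^u(\G)$ and the Gelfand transform of a $C^*$-algebra always lands in functions vanishing at infinity. But I must be careful that the \emph{group} topology on $\tilde\G$ (which is the topology of $\spec(C_0^u(\G))$ as established in the homeomorphism theorem above) agrees with the weak$^*$-topology used to form $\spec$; this is exactly condition (\ref{def:ig:two}), so there is nothing extra to prove. Thus $\theta(a)\in C_0(\tilde\G)$.

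For surjectivity: I would argue that the image of $\theta$ is a $*$-subalgebra of $C_0(\tilde\G)$ that separates points of $\tilde\G$ and vanishes nowhere, then invoke (the locally compact, non-unital version of) Stone--Weierstrass. Point separation is immediate: if $\gamma_1\neq\gamma_2$ in $\spec(C_0^u(\G))$ then there is $a\in C_0^u(\G)$ with $\gamma_1(a)\neq\gamma_2(a)$, i.e.\ $\theta(a)(\gamma_1)\neq\theta(a)(\gamma_2)$. Vanishing nowhere is equally immediate, since each $\gamma$ is a nonzero character so some $\theta(a)(\gamma)=\gamma(a)\neq0$. Closedness of the image: $\theta$ is a $*$-homomorphism between $C^*$-algebras, hence has closed range, so the image is a closed, point-separating, nowhere-vanishing $*$-subalgebra of $C_0(\tilde\G)$, which by Stone--Weierstrass must be all of $C_0(\tilde\G)$.

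The main obstacle I anticipate is not in the Stone--Weierstrass endgame but in the first paragraph: namely pinning down precisely that $\theta$, as built through the chain ``group representation $\rightsquigarrow$ $C^*(\tilde\G)$ via universal property $\rightsquigarrow$ $\hh\theta$ $\rightsquigarrow$ dualise'', really is $a\mapsto(\gamma\mapsto\gamma(a))$, with no stray twist by $\hh u^*$ versus $\hh u$ or by the unitary antipode coming from the ``reverse the arrows'' convention. This is a bookkeeping computation using the explicit formulas $U=(\pi_\G\phi\otimes\id)(\Ww_\H)$, $\hh U=\sigma(U^*)$, the duality relation between $\phi$ and $\hh\phi$, and the fact that $\hh u_u(\gamma)=(\gamma\otimes\id)(\WW)$; I would carry it out by slicing the defining identity for $\hh\theta$ against elements $(\id\otimes\hh\omega)(\WW)$, which are norm-dense in $C_0^u(\G)$, and simplifying using the corepresentation property of $\WW$. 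Once that identification is nailed down, the rest is routine.
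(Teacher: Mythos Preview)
Your approach is essentially the paper's: identify $\theta$ with the Gelfand transform via the duality relation between $\theta$ and $\hh\theta$, then obtain surjectivity from Stone--Weierstrass (the paper phrases this as ``self-adjoint, separates points, separates from $0$, hence dense'' together with closed range). One correction to your displayed duality relation: it should read $(\theta\otimes\id)(\WW_\G)=(\id\otimes\hh\theta)(\WW_{\tilde\G})$, not with the roles of $\WW_\G$ and $\WW_{\tilde\G}$ swapped (your version does not type-check, since $\hh\theta$ has domain $C^*(\tilde\G)$, not $C_0^u(\hh\G)$); you rightly flag this bookkeeping as the delicate point, and once the relation is written correctly your slicing argument against elements $(\id\otimes\hh\omega)(\Ww)$ is exactly what the paper does.
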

\begin{proof}
For a good treatment of Gelfand transforms for non-unital algebras, see
\cite[Section~2.3]{dales}, in particular \cite[Theorem~2.3.25]{dales}.
In keeping with our notation, let $C_0^u(\G) \rightarrow C_0(\tilde\G);
a\mapsto \tilde a$ be the Gelfand transform, so that $\tilde a(\gamma)
= \ip{\gamma}{a}$ for $a\in C_0^u(\G), \gamma \in \tilde \G = \spec(C_0^u(\G))$.
Then the algebra $\{ \tilde a : a\in C_0^u(\G) \}$ is self-adjoint, separates
the points of $\tilde\G$, and separates the points from $0$, and hence is
dense in $C_0(\tilde\G)$.  We conclude that the Gelfand transform is onto.

It hence remains to show that $\theta(a) = \tilde a$ for each $a\in C_0^u(\G)$.
Consider the universal bicharacter for $\tilde\G$.  This is
\[ \WW_{\tilde\G} = \wW_{\tilde\G} \in M(C_0(\tilde\G) \otimes C^*(\tilde\G))
= C_b^{str}(\tilde\G, M(C^*(\tilde\G))), \]
the space of bounded strictly continuous maps $\tilde\G \rightarrow
M(C^*(\tilde\G))$, and under this identification, $\WW_{\tilde\G}$ is nothing but
the inclusion $\tilde\G \rightarrow M(C^*(\tilde\G))$.  By definition,
\[ (\theta\otimes\id)(\WW_{\G}) = (\id\otimes\hh\theta)(\WW_{\tilde\G})
\in C_b^{str}(\tilde\G, M(C_0^u(\hh\G))), \]
and by the construction of $\hh\theta$, this is the inclusion
$\tilde\G = \Gr_u(\hh\G) \rightarrow M(C_0^u(\hh\G))$.
Let $\gamma\in\spec(C_0^u(\G))$ be associated to $\hh u_u \in \Gr_u(\hh\G)$.
Viewing $(\theta\otimes\id)(\WW_\G)$ as a strictly continuous function
$\tilde\G \rightarrow M(C_0^u(\hh\G))$, the value of this function at $\gamma$
is hence simply $\hh u_u$.  However,
this is equal to $(\gamma\otimes\id)(\WW_\G)$.

So, we have that $(\theta\otimes\id)(\WW_\G) \in M(C_0(\tilde\G)\otimes
C_0^u(\hh\G)) = C_b^{str}(\tilde\G, M(C_0^u(\hh\G)))$ is the function
$\gamma \mapsto (\gamma\otimes\id)(\WW_\G)$.  Apply $\id\otimes\pi_{\hh\G}$
and then slice by some $\hh\omega\in L^1(\hh\G)$ to see that
$\theta( (\id\otimes\hh\omega)(\Ww) ) \in M(C_0(\tilde\G))$ is the function
\[ \gamma \mapsto \ip{\gamma}{(\id\otimes\hh\omega)(\Ww)}. \]
As the collection of elements $(\id\otimes\hh\omega)(\Ww)$ is dense in
$C_0^u(\G)$, we conclude that $\theta$ is indeed nothing but the Gelfand transform.
\end{proof}

That $\theta$ is a surjection $C_0^u(\hh\G)\rightarrow C_0(\tilde\G)$ means
that $\tilde\G$ is identified as a \emph{Woronowicz closed quantum subgroup}
of $\G$, see \cite{dkss}.  In the next section, we shall prove that $\tilde\G$
satisfies the a priori stronger condition of being \emph{Vaes closed}.

We also note that if the reader is unhappy with the slightly sketchy proof just
given, then we could simply \emph{define} $\theta$ to be the Gelfand transform:
it is very easy to show that $\theta$ is then a Hopf $*$-homomorphism.  However,
we feel that for motivational purposes, the definition of $\hh\theta$ is more
natural.

The following shows that $\tilde\G$ is then the maximal ``classical''
subgroup of $\G$.

\begin{theorem}\label{thm:uni_prop}
The morphism $\tilde\G \rightarrow\G$ satisfies the following universal
property.  For any locally compact group $H$, and any morphism
$H \rightarrow \G$, there is a unique continuous group homomorphism
$H\rightarrow\tilde\G$ making the following, equivalent, diagrams commute:
\[ \xymatrix{ H \ar[r] \ar[d]_{\exists\,!} & \G \\
\tilde\G \ar[ru] } \qquad
\xymatrix{ M(C_0(H)) & C_0^u(\G) \ar[l] \ar[ld]_{\theta} \\
C_0(\tilde\G) \ar[u] } \]
\end{theorem}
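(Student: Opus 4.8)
The plan is to pass immediately to the dual/universal $C^*$-algebra picture, where the universal property of $\widetilde\G = \spec(C_0^u(\G))$ becomes (essentially) the universal property of the Gelfand transform for commutative $C^*$-algebras. First I would reduce the claim to the second diagram: a morphism $H\to\G$ is, by the definitions recalled in Section~2, a Hopf $*$-homomorphism $\psi:C_0^u(\G)\to M(C_0^u(H)) = M(C_0(H))$ (using that $H$ is classical, so $C_0^u(H)=C_0(H)$), and a morphism $H\to\widetilde\G$ is likewise a Hopf $*$-homomorphism $C_0^u(\widetilde\G)\to M(C_0(H))$; but $\widetilde\G$ is classical too, so this is a Hopf $*$-homomorphism $C_0(\widetilde\G)\to M(C_0(H))$. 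By Proposition~\ref{prop:gelfand}, $\theta:C_0^u(\G)\to C_0(\widetilde\G)$ is the Gelfand transform, so the right-hand diagram asks: given $\psi:C_0^u(\G)\to M(C_0(H))$ a Hopf $*$-homomorphism, there is a unique Hopf $*$-homomorphism $\rho:C_0(\widetilde\G)\to M(C_0(H))$ with $\rho\circ\theta=\psi$.

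For existence, the key observation is that $M(C_0(H))=C_b(H)$ is a \emph{commutative} $C^*$-algebra, so $\psi$ factors through the maximal commutative quotient of $C_0^u(\G)$. Concretely, $\psi$ is non-degenerate, so its strict extension sends each point evaluation $\omega_t\in\spec(C_b(H))$ ($t\in H$) to a character $\omega_t\circ\psi\in\spec(C_0^u(\G))=\widetilde\G$; I would show the resulting map $\kappa:H\to\widetilde\G$, $t\mapsto\omega_t\circ\psi$, is a continuous group homomorphism (continuity from weak$^*$-continuity of $t\mapsto\omega_t$ and boundedness; the group law because $\psi$ intertwines the coproducts, so $\omega_{st}\circ\psi$ corresponds to the product in $\spec(C_0^u(\G))$, which is exactly the product induced by $\Delta$). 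Dualising $\kappa$ gives $\rho:C_0(\widetilde\G)\to C_b(H)=M(C_0(H))$, $\rho(f)=f\circ\kappa$, which is a Hopf $*$-homomorphism essentially by construction. That $\rho\circ\theta=\psi$ then amounts to checking, for $a\in C_0^u(\G)$ and $t\in H$, that $(\rho\theta(a))(t)=\theta(a)(\kappa(t))=\langle\omega_t\circ\psi,a\rangle=\langle\omega_t,\psi(a)\rangle=(\psi(a))(t)$, using that $\theta$ is the Gelfand transform $\theta(a)(\gamma)=\langle\gamma,a\rangle$; so $\rho\theta=\psi$ on all of $C_0^u(\G)$.

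For uniqueness, suppose $\rho'$ also satisfies $\rho'\theta=\psi$. Since $\theta$ is surjective (Proposition~\ref{prop:gelfand}) and $\rho,\rho'$ are bounded $*$-homomorphisms (hence strictly continuous on $M$ and determined by their values on $C_0(\widetilde\G)$), we get $\rho=\rho'$ on the dense range of $\theta$, hence everywhere; equivalently, the two group homomorphisms $H\to\widetilde\G$ they induce agree. Finally I would verify the two diagrams are genuinely equivalent translations of one another: this is just unwinding the dictionary between morphisms, dual Hopf $*$-homomorphisms, and the surjection $\theta=\widehat{(\widehat\theta)}$ of Proposition~\ref{prop:gelfand} together with the duality relation $(\theta\otimes\id)(\WW_\G)=(\id\otimes\widehat\theta)(\WW_{\widetilde\G})$ used there.

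I expect the main obstacle to be purely bookkeeping: keeping straight which arrows get reversed under duality (a morphism $H\to\G$ corresponds to a Hopf $*$-homomorphism \emph{from} $C_0^u(\G)$, but the \emph{dual} morphism $\widehat\G\to\widehat H$ goes the other way), and confirming that the group structure on $\spec(C_0^u(\G))$ coming from $\Delta$ is exactly the one for which $\kappa$ is a homomorphism. There is no hard analysis here — once the commutativity of $M(C_0(H))$ is exploited, existence and uniqueness are the Gelfand-duality adjunction — so the write-up should be short, with most care spent on the compatibility of topologies (weak$^*$ on $\spec(C_0^u(\G))$ versus strict on $\Gr_u(\widehat\G)$, already handled in the preceding theorem) and on citing Proposition~\ref{prop:gelfand} correctly.
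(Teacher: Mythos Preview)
Your proposal is correct and follows essentially the same route as the paper: define the map $H\to\tilde\G$ by $t\mapsto\delta_t\circ\phi$ using that $M(C_0(H))$ is commutative, dualise to get $\rho:C_0(\tilde\G)\to M(C_0(H))$, verify $\rho\circ\theta=\phi$ via the Gelfand-transform identity from Proposition~\ref{prop:gelfand}, and deduce uniqueness from surjectivity of $\theta$. The only cosmetic difference is that the paper first checks the diagram commutes and then deduces that $\rho$ is a Hopf $*$-homomorphism from the surjectivity of $\theta$, whereas you propose to verify the group-homomorphism property of $\kappa$ directly; both orderings amount to the same computation.
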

\begin{proof}
That the two diagrams are equivalent follows from the definition of
what a morphism of locally compact quantum groups is.  As $\theta$ is
onto, if a Hopf $*$-homomorphism $C_0(\tilde\G) \rightarrow M(C_0(H))$
exists, making the diagram commute, then it is uniquely defined.

Let $\phi:C_0^u(\G)\rightarrow M(C_0(H))$ be our Hopf $*$-homomorphism.  For
each $h\in H$ let $\delta_h:C_0(H)\rightarrow\mathbb C$ be the functional
given by point evaluation.  So $\delta_h$ is a character, and as $\phi$
is non-degenerate, $\delta_h\circ\phi$ is a character, and hence defines
a member of $\tilde\G$.  We hence obtain our map $H\rightarrow\tilde\G$, which
is easily seen to be continuous, and thus induces
$\psi : C_0(\tilde\G) \rightarrow M(C_0(H))$.  Then, for
$a\in C_0^u(\G)$ and $h\in H$,
\[ \psi(\theta(a))(h) = \theta(a)(\delta_h\circ\phi)
= \ip{\delta_h\circ\phi}{a} = \phi(a)(h), \]
and so $\psi\circ\theta = \phi$ as required.  As $\theta,\phi$ are Hopf
$*$-homomorphisms,
\[ \Delta_H \psi \theta = \Delta_H \phi = (\phi\otimes\phi)\Delta_\G
= (\psi\theta\otimes\psi\theta)\Delta_\G
= (\psi\otimes\psi)\Delta_{\tilde\G}\theta. \]
As $\theta$ is onto, it follows that $\psi$ is a Hopf $*$-homomorphism,
and so our map $H\rightarrow\tilde\G$ is a group homomorphism, as required.
\end{proof}

We can of course dualise this universal property, and obtain two more
commuting diagrams
\[ \xymatrix{ \hh H & \hh\G \ar[l] \ar[ld] \\
\hh{\tilde\G} \ar[u]^{\exists\,!} } 
\qquad
\xymatrix{ C^*(H) \ar[r]^{\hh\phi} \ar[d] & M(C_0^u(\hh\G)) \\
C^*(\tilde\G) \ar[ru]_{\hh\theta} } \]
As $\hh\phi:C^*(H)\rightarrow M(C_0^u(\hh\G))$ is a Hopf $*$-homomorphism, 
the strict extension must send $h\in H\subseteq M(C^*(H))$ to a member of
$\Gr_u(\hh\G) = \tilde\G$, and this gives the map $C^*(H)\rightarrow
C^*(\tilde\G)$.

Let us think about this result from the perspective of some elementary
category theory.  Let ${\sf LCG}$ and ${\sf LCQG}$ be the categories of
locally compact groups and locally compact quantum groups, respectively.
The definition of a morphism in ${\sf LCQG}$ is setup precisely so that the
assignment of $G\in{\sf LCG}$ to $(C_0(G),\Delta_G)\in{\sf LCQG}$ \emph{is}
a functor.  Let this functor be $\mc C:{\sf LCG} \rightarrow {\sf LCQG}$,
we choose $\mc C$ for ``classical''.  Let $\mc I:{\sf LCQG}\rightarrow
{\sf LCG}$ be the ``intrinsic group functor'', the assignment of $\tilde\G$
to $\G$.  We recall the notion of an adjoint functor, see for example
\cite[Chapter~2]{leinster}.

\begin{theorem}
The functor $\mc I$ is a right adjoint to the functor $\mc C$.
\end{theorem}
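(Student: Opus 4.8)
The plan is to verify directly that the universal property established in Theorem~\ref{thm:uni_prop} is precisely the statement that $\mc I$ is right adjoint to $\mc C$. Recall that $\mc I\mc C$ need not be the identity, so we cannot simply cite a triangle-identity shortcut; instead I would produce, for each locally compact group $H$ and each quantum group $\G$, a natural bijection
\[ \operatorname{Hom}_{\sf LCQG}(\mc C(H), \G) \cong \operatorname{Hom}_{\sf LCG}(H, \mc I(\G)) = \operatorname{Hom}_{\sf LCG}(H, \tilde\G). \]
The morphism $\tilde\G \rightarrow \G$ built in Section~\ref{sec:uni_prop} will serve as the counit $\varepsilon_\G : \mc C(\mc I(\G)) = \mc C(\tilde\G) \rightarrow \G$ of the adjunction (here using that $\tilde\G$, viewed as $\spec(C_0^u(\G))$, is a locally compact group, so $\mc C(\tilde\G) = (C_0(\tilde\G),\Delta_{\tilde\G})$ genuinely lies in ${\sf LCQG}$, and that Proposition~\ref{prop:gelfand} identifies the associated Hopf $*$-homomorphism $\theta$). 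The required bijection then sends a morphism $g : \mc C(H) \rightarrow \G$ to the unique group homomorphism $H \rightarrow \tilde\G$ provided by Theorem~\ref{thm:uni_prop}, and sends a group homomorphism $k : H \rightarrow \tilde\G$ to the composite $\mc C(H) \xrightarrow{\mc C(k)} \mc C(\tilde\G) \xrightarrow{\varepsilon_\G} \G$; that these are mutually inverse is exactly the existence-and-uniqueness content of Theorem~\ref{thm:uni_prop}.

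The main work, and the step I expect to be the real (if routine) obstacle, is checking naturality of this bijection in both variables. Naturality in $H$: given a continuous group homomorphism $j : H' \rightarrow H$, one must check that precomposing $g$ with $\mc C(j)$ corresponds on the other side to precomposing the induced map $H \rightarrow \tilde\G$ with $j$; this follows from the uniqueness clause in Theorem~\ref{thm:uni_prop} applied to $\G$ and $H'$, since both candidate maps $H' \rightarrow \tilde\G$ fill the same commuting triangle. Naturality in $\G$: given a morphism $f : \G \rightarrow \G'$, one must check that the square relating the two Hom-bijections commutes, i.e.\ that $\tilde f \circ (\text{map } H\to\tilde\G) = (\text{map } H \to \tilde{\G'})$ where $\tilde f = \mc I(f)$ is the functor of the previous theorem; again this reduces, via the uniqueness part of Theorem~\ref{thm:uni_prop} for $\G'$, to checking that $f \circ \varepsilon_\G \circ \mc C(\text{stuff})$ and $\varepsilon_{\G'} \circ \mc C(\tilde f) \circ \mc C(\text{stuff})$ agree, which amounts to the compatibility of the counit with $f$, namely $f \circ \varepsilon_\G = \varepsilon_{\G'} \circ \mc C(\mc I(f))$. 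This last identity $f\varepsilon_\G = \varepsilon_{\G'}\mc C(\tilde f)$ — naturality of the counit itself — is the one genuinely new thing to verify, and I would check it concretely at the level of characters: a character $\gamma\in\spec(C_0^u(\G))$ is sent by $\tilde f$ to $\gamma\circ\phi_f^{-1}$ (reading off from the relation $\gamma' = \gamma\circ\phi$ in the previous theorem, with the arrow directions of $f$ tracked carefully), and chasing this through the Gelfand-transform description of $\theta$ from Proposition~\ref{prop:gelfand} gives the claim.

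An alternative, slightly slicker route I would consider is to bypass the explicit bijection and instead invoke the universal-arrow formulation of adjointness: it suffices to exhibit, for each $\G$, a universal arrow from $\mc C$ to $\G$, and that is exactly what Theorem~\ref{thm:uni_prop} does — the morphism $\varepsilon_\G : \mc C(\tilde\G) \rightarrow \G$ together with the statement that every $\mc C(H) \rightarrow \G$ factors uniquely through it via some $\mc C(H) \rightarrow \mc C(\tilde\G)$ coming from a (necessarily unique) group homomorphism $H \rightarrow \tilde\G$. One still has to note that "factors via $\mc C$ of a group homomorphism" is automatic here because $\mc C$ is full and faithful on the relevant Hom-sets — equivalently, any Hopf $*$-homomorphism between two \emph{classical} quantum groups comes from a unique continuous group homomorphism, which is standard. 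With that observation, the theorem follows immediately from Theorem~\ref{thm:uni_prop}, and I would likely present the proof in this compressed form, remarking that the functoriality of $\tilde f \mapsto f$ needed to make this a genuine adjunction (not merely a family of universal arrows) was already established in the previous theorem.

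\begin{proof}
By \cite[Chapter~2]{leinster}, to show $\mc I$ is right adjoint to $\mc C$ it suffices to exhibit, for every $\G\in{\sf LCQG}$, a universal arrow from $\mc C$ to $\G$; the naturality then follows automatically from the functoriality already established. We claim the morphism $\varepsilon_\G:\tilde\G\rightarrow\G$ constructed in Section~\ref{sec:uni_prop} is such a universal arrow. Indeed, $\tilde\G$ is a locally compact group, so $\mc C(\tilde\G) = (C_0(\tilde\G),\Delta_{\tilde\G})$, and $\varepsilon_\G$ is a morphism $\mc C(\tilde\G)\rightarrow\G$, represented at the $C^*$-level by the Hopf $*$-homomorphism $\theta:C_0^u(\G)\rightarrow C_0(\tilde\G)$ of Proposition~\ref{prop:gelfand}.

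Let $H\in{\sf LCG}$ and let $f:\mc C(H)\rightarrow\G$ be a morphism, represented by $\phi:C_0^u(\G)\rightarrow M(C_0(H))$. By Theorem~\ref{thm:uni_prop} there is a unique continuous group homomorphism $k:H\rightarrow\tilde\G$ such that the triangle
\[ \xymatrix{ H \ar[r] \ar[d]_{k} & \G \\ \tilde\G \ar[ru]_{\varepsilon_\G} } \]
commutes, i.e.\ $\varepsilon_\G\circ\mc C(k) = f$. This says exactly that $(\varepsilon_\G,\tilde\G)$ has the universal property: every morphism from $\mc C(H)$ to $\G$ factors uniquely as $\varepsilon_\G\circ\mc C(k)$ for a unique morphism $\mc C(k)$ in the image of $\mc C$ (uniqueness of $\mc C(k)$ as a morphism follows from uniqueness of $k$, since $\mc C$ restricted to classical quantum groups is faithful: a Hopf $*$-homomorphism between $C_0(\tilde\G)$ and $M(C_0(H))$ comes from at most one continuous group homomorphism $H\rightarrow\tilde\G$).

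Hence $\mc I$ is right adjoint to $\mc C$, with counit $\varepsilon_\G:\mc C(\mc I(\G)) = \mc C(\tilde\G)\rightarrow\G$ given by the morphism above, and with the unit $\eta_H:H\rightarrow\mc I(\mc C(H)) = \widetilde{\mc C(H)}$ obtained by applying the universal property to the identity morphism $\mc C(H)\rightarrow\mc C(H)$; naturality of these transformations is precisely the functoriality statement of the previous theorem together with the uniqueness clauses of Theorem~\ref{thm:uni_prop}.
\end{proof}
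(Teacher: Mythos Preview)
Your proof is correct and takes essentially the same approach as the paper: both invoke the universal-arrow (or ``terminal morphism'') characterisation of adjointness and observe that Theorem~\ref{thm:uni_prop} provides exactly this universal arrow $\varepsilon_\G:\mc C(\tilde\G)\rightarrow\G$, with functoriality of $\mc I$ then being automatic from general category theory. Your written proof is somewhat more explicit about the role of $\mc C$ being faithful and about the counit/unit, but the argument is the same.
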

\begin{proof}
This is equivalent to $\mc C$ being a left adjoint to $\mc I$.
There are a number of different, equivalent meanings to this.  One is that
$\mc C:{\sf LCQG}\leftarrow{\sf LCG}$ is a left adjoint functor if for each
$\G\in{\sf LCQG}$ there is a \emph{terminal morphism} from $\mc C$ to $\G$.
That is, there exists $\tilde\G\in{\sf LCG}$ and $\mc C(\tilde\G)
\rightarrow\G$ such that for each $H\in{\sf LCG}$ and each morphism
$f:\mc C(H)\rightarrow\G$, there is a unique morphism $H\rightarrow\tilde\G$ with
\[ \xymatrix{ H \ar[d]_{\exists\,! g} & \mc C(H) \ar[d]_{\mc C(g)}
\ar[rd]^f \\
\tilde\G & \mc C(\tilde\G) \ar[r] & \G } \]
However, if we remember that $\mc C$ is essentially the ``inclusion'', then
this is nothing but the universal property of $\tilde\G$ just shown in
Theorem~\ref{thm:uni_prop}.  In this situation, it is then actually automatic
(from purely category theoretic considerations) that $\mc I:\G\rightarrow\tilde\G$
is a functor (the universal property alone can be used to construct
$\mc I(f):\tilde\G\rightarrow\tilde\H$ given any $f:\G\rightarrow\H)$.
\end{proof}

\subsection{Closed subgroups}\label{sec:closed}

In \cite{dkss} the notion of a \emph{closed quantum subgroup} was explored.
We say that a morphism $\H\rightarrow\G$ identifies $\H$ as a closed quantum
subgroup of $\G$, in the sense of Woronowicz, if the Hopf $*$-homomorphism
$C_0^u(\G)\rightarrow M(C_0^u(\H))$ maps into, and onto, $C_0^u(\H)$.
This notion is easily seen to reduce to the classical notion of a closed
group, when applied to a classical group $G$, see \cite[Sections~3, 4]{dkss}.

When $H\subseteq G$ is a closed subgroup of a locally compact group, the
Herz Restriction Theorem (see \cite{herz})
tells us that the restriction map gives a quotient
map from the Fourier algebra $A(G)$ to $A(H)$, or equivalently, gives a
normal injective $*$-homomorphism $VN(H)\rightarrow VN(G)$.  This notion
was generalised to quantum groups in \cite[Definition~2.5]{vaesii}, and
motivated the authors of \cite{dkss} to give the notion of a Vaes closed
quantum subgroup.  Here we state the original definition: $\H\rightarrow\G$
identifies $\H$ as a Vaes closed quantum subgroup of $\G$ if there is a
normal, unital, injective $*$-homomorphism $\gamma : L^\infty(\hh\H)\rightarrow
L^\infty(\hh\G)$ with $\pi_{\hh\G} \circ \hh\phi = \gamma \circ \pi_{\hh\H}$
where $\hh\phi:C_0^u(\hh\H) \rightarrow M(C_0^u(\hh\G))$ is the Hopf
$*$-homomorphism representing the dual morphism $\hh\G\rightarrow\hh\H$.

We observed above that $\tilde\G$ is Woronowicz closed in $\G$.  
In \cite[Theorem~3.14]{kn} it's shown that $\G\rightarrow\tilde\G$ preserves
compactness and discreteness.  As $\tilde\G$ is a closed quantum subgroup of
$\G$, this result now also follows immediately from results of \cite{dkss}.
Indeed, if $\G$ is compact, then $C_0^u(\G)$ is unital, so $C_0(\tilde\G)$ is
unital, so $\tilde\G$ is compact.  The discrete case is more involved, see
\cite[Theorem~6.2]{dkss}.

We now prove a stronger result, that $\tilde\G$ is Vaes closed.

\begin{theorem}
The morphism $\theta:C_0^u(\hh\G) \rightarrow C_0(\tilde\G)$ identifies
$\tilde\G$ as a Vaes closed quantum subgroup of $\G$.
\end{theorem}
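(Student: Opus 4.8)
We must produce a normal, unital, injective $*$-homomorphism $\gamma\colon L^\infty(\hh{\tilde\G})\to L^\infty(\hh\G)$ satisfying $\pi_{\hh\G}\circ\hh\theta=\gamma\circ\pi_{\hh{\tilde\G}}$, where $\hh\theta\colon C_0^u(\hh{\tilde\G})\to M(C_0^u(\hh\G))$ is the Hopf $*$-homomorphism of the dual morphism $\hh\G\to\hh{\tilde\G}$ constructed in Section~\ref{sec:uni_prop}. The key point is that $\tilde\G$ is a genuine locally compact group, so $\hh{\tilde\G}$ is the co-commutative quantum group: $C_0^u(\hh{\tilde\G})=C^*(\tilde\G)$, $L^\infty(\hh{\tilde\G})=VN(\tilde\G)$, the reducing morphism $\pi_{\hh{\tilde\G}}$ is the canonical surjection $C^*(\tilde\G)\to C^*_r(\tilde\G)\subseteq VN(\tilde\G)$, and $VN(\tilde\G)$ is generated as a von Neumann algebra by the group-like unitaries $\lambda_{\tilde\G}(h)$ ($h\in\tilde\G$), which are precisely the group-like elements of $(VN(\tilde\G),\hh\Delta_{\hh{\tilde\G}})$.

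First I would pin down the only possible candidate for $\gamma$. Unwinding the construction of $\hh\theta$, its strict extension carries $h\in\tilde\G\subseteq M(C^*(\tilde\G))$ to the associated $\hh u_{u,h}\in\Gr_u(\hh\G)$, and then $\pi_{\hh\G}$ sends this to $\hh u_h\in\Gr(\hh\G)\subseteq M(C_0(\hh\G))\subseteq L^\infty(\hh\G)$; in other words $\pi_{\hh\G}\circ\hh\theta$ is the integrated form of the strongly continuous unitary representation $\rho\colon\tilde\G\to\mc U(L^\infty(\hh\G))$, $h\mapsto\hh u_h$ (strong continuity being part of the statement that the four descriptions of $\tilde\G$ are homeomorphic). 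Since $\pi_{\hh{\tilde\G}}(h)=\lambda_{\tilde\G}(h)$, the required identity forces $\gamma(\lambda_{\tilde\G}(h))=\hh u_h$, so $\gamma$ must be the normal extension of $\rho$ to $VN(\tilde\G)$. Conversely, once such a normal $*$-homomorphism is available, $\pi_{\hh\G}\circ\hh\theta=\gamma\circ\pi_{\hh{\tilde\G}}$ holds on the norm-dense span of the group elements in $C^*(\tilde\G)$, hence everywhere, and $\gamma$ automatically intertwines $\hh\Delta_{\hh{\tilde\G}}$ and $\hh\Delta_{\hh\G}$ because $\hh\Delta_{\hh\G}(\hh u_h)=\hh u_h\otimes\hh u_h$. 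As a consistency check, $\gamma$ is then tied to the dual quantum group homomorphism $\hh\beta\colon VN(\tilde\G)\to L^\infty(\hh\G)\vnten VN(\tilde\G)$ of the morphism $\tilde\G\to\G$ by $\hh\beta=(\gamma\otimes\id)\hh\Delta_{\hh{\tilde\G}}$, using the relation $\hh\beta(\hh u)=\hh u'\otimes\hh u$ from the intrinsic group functor theorem.

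So everything reduces to one genuinely analytic statement: that $\rho$ extends to a normal, injective $*$-homomorphism on $VN(\tilde\G)$; equivalently, that $\rho$ is quasi-equivalent to the left regular representation $\lambda_{\tilde\G}$, so that $\lambda_{\tilde\G}(h)\mapsto\hh u_h$ is a normal $*$-isomorphism of $VN(\tilde\G)$ onto $N:=\{\hh u_h:h\in\tilde\G\}''\subseteq L^\infty(\hh\G)$. This is the main obstacle — formal manipulations with $\hh\beta$ alone do not give the injectivity — and my plan is to settle it using the appendix of \cite{BV}. The group-like unitaries $\hh u_h$ are fixed by the scaling group of $\hh\G$ and interact suitably with the modular automorphism group of the left Haar weight $\hh\varphi$, so that $N$ is globally invariant under $\sigma^{\hh\varphi}$; the coproduct $\hh\Delta_{\hh\G}$ restricts to a coassociative, co-commutative $*$-homomorphism $N\to N\vnten N$, and $\hh\varphi$ restricts to a left-invariant normal semifinite weight on $N$. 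Thus $(N,\hh\Delta_{\hh\G}|_N,\hh\varphi|_N)$ is a co-commutative locally compact quantum group whose group of group-like unitaries, with the relative weak$^*$-topology, is $\tilde\G$; by the structure theory of co-commutative locally compact quantum groups it is isomorphic to $(VN(\tilde\G),\hh\Delta_{\hh{\tilde\G}})$ via $\hh u_h\mapsto\lambda_{\tilde\G}(h)$, and inverting this isomorphism gives the normal, unital, injective $\gamma$ we need, completing the verification that $\tilde\G$ is Vaes closed. Finally I would record that, combined with Proposition~\ref{prop:gelfand} and the universal property of $\tilde\G$, this also yields the corollary announced in the introduction: any closed quantum subgroup whose $C^*$-algebra is classical factors through $\tilde\G\to\G$ and is therefore itself Vaes closed.
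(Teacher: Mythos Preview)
Your proposal is correct and follows essentially the same route as the paper: both let $N$ be the von Neumann algebra generated by $\{\hh u_h : h\in\tilde\G\}$ inside $L^\infty(\hh\G)$, invoke \cite[Proposition~A5]{BV} to see that $(N,\hh\Delta|_N)$ is itself a (cocommutative) locally compact quantum group, and then identify its underlying group with $\tilde\G$ to obtain the required normal injection $VN(\tilde\G)\hookrightarrow L^\infty(\hh\G)$. The only noteworthy difference is in checking the Baaj--Vaes hypotheses: the paper obtains triviality of $\tau_t$ and invariance under $R$ on $N$ cleanly from the fact that $\hh\theta$ intertwines the scaling groups and unitary antipodes (\cite[Remark~12.1]{kusuni}) and these are trivial on $C^*(\tilde\G)$, whereas you argue directly with group-like unitaries---your sketch omits $R$-invariance and is vaguer about why $\tau_t$ fixes the $\hh u_h$, so you may want to adopt the paper's intertwining argument here.
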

\begin{proof}
Consider the dual morphism $\hh\theta:C^*(\tilde\G) \rightarrow
M(C_0^u(\hh\G))$.  Compose with $\pi_{\hh\G}$ and let $M$ be the von Neumann
algebra generated by the image in $L^\infty(\hh\G)$.  As $\pi_{\hh\G}\circ
\hh\theta$ is a Hopf $*$-homomorphism, it follows by weak$^*$-continuity
that $\Delta_{\hh\G}(x) \in M \vnten M \subseteq L^\infty(\hh\G)\vnten
L^\infty(\hh\G)$ for each $x\in M$.  By \cite[Remark~12.1]{kusuni}
we also know that $\hh\theta$ intertwines the scaling group $(\tau_t)$
and the unitary antipode $R$; the same is true of $\pi_{\hh\G}$.
It follows that $(\tau_t)$ restricts to the identity map on $M$, and that
$R$ restricts to $M$.  It follows from \cite[Proposition~A5]{BV}, that $M$
``is a quantum group'', that is, admits invariant weights.  For us, an
convenient way to restate this is that there is a locally compact quantum
group $\K$ and a normal $*$-isomorphism $\psi : L^\infty(\K)\rightarrow M
\subseteq L^\infty(\hh\G)$
with $\psi$ intertwining the coproducts of $\K$ and $\hh\G$.  As
$C^*(\tilde\G)$ is cocommutative and its image is weak$^*$-dense in
$L^\infty(\K)$, it follows that $\K$ is cocommutative, and so there is a
locally compact group $K$ with $L^\infty(\K) = VN(K)$.

Thus, we obtain the following commutative diagram:
\[ \xymatrix{ C^*(\tilde\G) \ar[r]^{\hh\theta} \ar[rd] &
M(C_0^u(\hh\G)) \ar[r]^\pi &
M(C_0(\hh\G)) \ar[r] &
L^\infty(\hh\G) \\
& L^\infty(\K) = VN(K) \ar[rru]_{\psi} } \]
where $\psi$ is a normal $*$-isomorphism onto its range, which intertwines
the coproducts.  This means, in particular, that $\psi$ restricts to a map
$K = \Gr(VN(K)) \rightarrow \Gr(\hh\G) = \tilde\G$, and so we obtain an
injective group homomorphism $f:K \rightarrow \tilde\G$.  As $\psi$ is normal,
by the definition of the topologies on $\Gr(VN(K))$ and $\Gr(\hh\G)$, we see
that $f$ is continuous.

It follows from the discussion in Section~\ref{sec:uni_prop} that, if we
regard $\tilde\G$ as being the set $\Gr(\hh\G)$, then the map
$\pi\hh\theta$ sends $\hh u \in \tilde \G \subseteq M(C^*(\tilde\G))$ to
$\hh u \in \Gr(\hh\G) \subseteq L^\infty(\hh\G)$.  As such $\hh u\in M$,
we see that $\psi^{-1}(\hh u) \in \Gr(VN(K))$, and so we obtain again a
continuous group homomorphism  $g:\tilde\G \rightarrow K$.  Then $f\circ g:
\tilde\G \rightarrow \tilde\G$ is the identity, and as $f$ is injective,
$f$ and $g$ are mutual inverses.

Thus $K\cong\tilde\G$ and so $VN(K) \cong VN(\tilde\G)$.  The isomorphisms
involved ensure that the induced map $\theta_0:VN(\tilde\G) \rightarrow
L^\infty(\hh\G)$ restricts to the identity map on $\tilde\G = \Gr(\hh\G)$.
Hence we obtain the following commutative diagram:
\[ \xymatrix{ VN(\tilde\G) \ar[d]_{\cong} \ar[rd]^{\theta_0}
& C^*(\tilde\G) \ar[l]_{\pi} \ar[d]^{\pi\hh\theta} \\
VN(K) \ar[r]^{\psi} & L^\infty(\hh\G) } \]
as required.
\end{proof}

We have hence obtained a normal injective unital $*$-homomorphism
$\theta_0:VN(\tilde\G) \rightarrow L^\infty(\hh\G)$ such $\pi_{\hh\G}
\hh\theta = \theta_0\pi_{\hh{\tilde\G}}$.  By the definition of $\hh\theta$,
it follows that if we identify $\tilde\G$ with $\Gr(\hh\G)$ then
$\theta_0$ is simply the extension of the ``inclusion'' $VN(\tilde\G)
\supseteq \tilde \G = \Gr(\hh\G) \rightarrow L^\infty(\hh\G)$.  In particular,
the von Neumann algebra generated by $\Gr(\hh\G)$ is isomorphic to
$VN(\tilde\G)$.

The pre-adjoint of $\theta_0$ gives us a (complete) quotient map $L^1(\hh\G)
\rightarrow A(\tilde\G)$, the Fourier algebra of $\tilde\G$.
That is, for each $\hh\omega\in L^1(\hh\G)$
we obtain a function $a : \tilde\G = \Gr(\hh\G)\rightarrow\mathbb C,
\hh u\mapsto \ip{\hh u}{\hh \omega}$.  Clearly $a$ is continuous; the
content of the theorem is that $a\in A(\tilde\G)$, and that every member
of $A(\tilde\G)$ arises in this way.

The following improves \cite[Proposition~5.17]{kn}, in that we do not need to
assume that $\G$ is discrete.  We refer to \cite{bt} for the notion of a
\emph{coamenable} quantum group.

\begin{theorem}
Let $\hh\G$ be coamenable.  Then $\tilde\G$ is amenable.
\end{theorem}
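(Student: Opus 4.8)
The plan is to exploit the Vaes-closed picture just established, which gives a normal injective unital $*$-homomorphism $\theta_0 : VN(\tilde\G) \rightarrow L^\infty(\hh\G)$ intertwining the coproducts. Pre-dualising, $(\theta_0)_*$ is a complete quotient map $L^1(\hh\G) \rightarrow A(\tilde\G)$, and because $\theta_0$ intertwines coproducts, $(\theta_0)_*$ is an algebra homomorphism from $(L^1(\hh\G), \star)$ onto the Fourier algebra $A(\tilde\G)$ with its pointwise product. So $A(\tilde\G)$ is a quotient, as a completely contractive Banach algebra, of $L^1(\hh\G)$.

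First I would recall that $\hh\G$ coamenable means, by \cite{bt}, that $L^1(\hh\G)$ has a bounded approximate identity (equivalently, the counit $\epsilon$ on $C_0^u(\hh\G)$ factors through $C_0(\hh\G)$, equivalently $L^1(\hh\G)$ is operator amenable, or at least has a bai). The cleanest route: coamenability of $\hh\G$ is equivalent to $L^1(\hh\G)$ possessing a bounded approximate identity. Then I would push this bai forward along the quotient homomorphism $(\theta_0)_*$ to obtain a bounded approximate identity for $A(\tilde\G)$: if $(e_i)$ is a bai for $L^1(\hh\G)$ and $f = (\theta_0)_*(g)$ is a typical element of $A(\tilde\G)$, then $(\theta_0)_*(e_i) f = (\theta_0)_*(e_i \star g) \to (\theta_0)_*(g) = f$, using that $(\theta_0)_*$ is a bounded algebra homomorphism and is onto. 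Hence $A(\tilde\G)$ has a bounded approximate identity. By Leptin's theorem, a locally compact group $K$ is amenable if and only if its Fourier algebra $A(K)$ has a bounded approximate identity; applying this to $K = \tilde\G$ gives that $\tilde\G$ is amenable.

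The main obstacle, such as it is, is bookkeeping rather than depth: one must be careful that the product on $A(\tilde\G)$ coming from $(\theta_0)_*$ really is the pointwise product on the Fourier algebra — this is where the fact that $\theta_0$ intertwines the coproducts, and that $\Delta_{VN(\tilde\G)}$ is the standard cocommutative coproduct, is used — and that $\tilde\G$ being a genuine locally compact group (not merely a quantum group) is already in hand from the previous theorem, so that Leptin's theorem applies verbatim. One should also note, for the ``if'' direction of coamenability $\Leftrightarrow$ bai, the reference \cite{bt} (Bédos--Tuset); alternatively one can phrase everything in terms of operator amenability and invoke Ruan's theorem that $A(K)$ is operator amenable iff $K$ is amenable, which dovetails with the complete-quotient structure already present, but the bai argument via Leptin is the most economical. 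Finally, I would remark that this genuinely strengthens \cite[Proposition~5.17]{kn}, since no discreteness hypothesis on $\G$ is needed: the only input is coamenability of $\hh\G$ together with the Vaes-closedness of $\tilde\G$ in $\G$ established above.
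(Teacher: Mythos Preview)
Your proposal is correct and follows essentially the same route as the paper: use the pre-adjoint of $\theta_0$ (established just before the theorem) to realise $A(\tilde\G)$ as a Banach-algebra quotient of $L^1(\hh\G)$, push the bounded approximate identity guaranteed by coamenability of $\hh\G$ through this quotient, and invoke Leptin's theorem. The paper's proof is simply a terser statement of exactly these steps.
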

\begin{proof}
That $\hh\G$ is coamenable is equivalent to $L^1(\hh\G)$ having a bounded
approximate identity.  As $A(\tilde\G)$ is a quotient of the Banach algebra
$L^1(\hh\G)$, it follows that $A(\tilde\G)$ also has a bounded approximate
identity, and so $\tilde\G$ is amenable, as claimed.
\end{proof}

Let us finish by observing the following corollary of the universal property
of $\tilde\G$, which shows that all ``classical'' closed quantum subgroups
are Vaes closed.

\begin{corollary}
Let $H$ be a locally compact group, identified as a (Woronowicz) closed
subgroup of $\G$.  Then $H$ is Vaes closed.
\end{corollary}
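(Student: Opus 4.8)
The plan is to leverage the universal property of $\tilde\G$ (Theorem~\ref{thm:uni_prop}) together with the fact, just established, that $\tilde\G$ is Vaes closed. Suppose $H$ is a locally compact group, identified as a Woronowicz closed subgroup of $\G$ via a morphism $H \rightarrow \G$; so the associated Hopf $*$-homomorphism $\phi:C_0^u(\G)\rightarrow M(C_0(H))$ maps onto $C_0(H)$. Since $H$ is a classical group, the morphism $H\rightarrow\G$ is in particular a morphism from the category ${\sf LCG}$ (via the inclusion functor $\mc C$) into $\G$, so the universal property of $\tilde\G$ provides a continuous group homomorphism $g:H\rightarrow\tilde\G$ with $\theta_g \circ \theta = \phi$, where $\theta:C_0^u(\G)\rightarrow C_0(\tilde\G)$ is the Gelfand transform of Proposition~\ref{prop:gelfand} and $\theta_g: C_0(\tilde\G)\rightarrow M(C_0(H))$ is the Hopf $*$-homomorphism representing $g$.

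The key step is then to show that $g$ is a \emph{homeomorphism} onto its image, equivalently, that the Hopf $*$-homomorphism $\theta_g$ is a surjection onto $C_0(H)$, so that $g$ identifies $H$ as a Woronowicz closed subgroup of $\tilde\G$. Surjectivity of $\theta_g$ follows from surjectivity of $\phi = \theta_g\circ\theta$: since $\phi$ has dense range in $C_0(H)$ and factors through $\theta_g$, the image of $\theta_g$ is dense; but a closed subgroup of a locally compact group, namely $g(H)\subseteq\tilde\G$ (which is the ``support'' of the quotient algebra), gives that the image is actually $C_0(H)$ itself — more precisely, $C_0(H) = C_0(\tilde\G)/I$ for the ideal $I$ of functions vanishing on $g(H)$, and this forces $g(H)$ to be closed in $\tilde\G$, i.e.\ $H$ is a closed subgroup of the locally compact group $\tilde\G$. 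Here one should be a little careful: $g$ injective follows because $\delta_h\circ\phi = \delta_{h'}\circ\phi$ would force $\phi(a)(h) = \phi(a)(h')$ for all $a$, contradicting that $\phi$ is onto $C_0(H)$, which separates points of $H$.

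Once $H$ is realised as a genuine closed subgroup of the locally compact group $\tilde\G$, the classical Herz Restriction Theorem (see \cite{herz}) provides a normal injective unital $*$-homomorphism $VN(H)\rightarrow VN(\tilde\G)$ intertwining the coproducts; that is, $H$ is Vaes closed \emph{inside} $\tilde\G$. Composing with the Vaes-closedness of $\tilde\G$ inside $\G$ — i.e.\ the map $\theta_0:VN(\tilde\G)\rightarrow L^\infty(\hh\G)$ from the previous theorem, which is normal, injective, unital and intertwines the coproducts — yields a normal injective unital $*$-homomorphism $VN(H)\rightarrow L^\infty(\hh\G)$ intertwining coproducts, which is exactly what it means for $H\rightarrow\G$ to identify $H$ as a Vaes closed quantum subgroup. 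One needs only to check that the composite intertwines $\pi$ with the reducing morphisms, which is automatic since each constituent map does.

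The main obstacle I expect is the middle step: carefully verifying that the factorisation $\phi = \theta_g\circ\theta$ through the (commutative) algebra $C_0(\tilde\G)$ actually forces $g$ to embed $H$ as a \emph{closed} — not merely continuously injected — subgroup of $\tilde\G$. The cleanest route is probably to argue entirely on the Gelfand spectrum side: the map $g:H\rightarrow\tilde\G$ is dual to $\theta_g$, and since $\phi$ is a quotient map of $C^*$-algebras (being a surjective Hopf $*$-homomorphism onto $C_0(H)$), its dual is a homeomorphism of $H$ onto a closed subset of $\spec(C_0(\tilde\G)) = \tilde\G$; because $\phi = \theta_g\circ\theta$ and $\theta$ is itself the Gelfand surjection, this closed subset is precisely $g(H)$, and the group structure transported along makes $g(H)$ a closed subgroup. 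After that, the rest is a matter of chaining together already-established intertwining properties.
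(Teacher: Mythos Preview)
Your approach is essentially identical to the paper's: factor $H\rightarrow\G$ through $\tilde\G$ via the universal property, verify that $H$ sits inside $\tilde\G$ as a closed subgroup, apply Herz restriction to obtain $VN(H)\rightarrow VN(\tilde\G)$, and compose with the Vaes embedding $VN(\tilde\G)\rightarrow L^\infty(\hh\G)$.

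The only point worth flagging is that you make the middle step considerably harder than necessary. You argue via density of the image of $\theta_g$, then invoke a Gelfand-spectrum argument to upgrade to genuine surjectivity and closedness. The paper dispatches this in one line: since $\theta:C_0^u(\G)\rightarrow C_0(\tilde\G)$ is \emph{surjective} (Proposition~\ref{prop:gelfand}) and $\phi=\theta_g\circ\theta$ is surjective onto $C_0(H)$ by the Woronowicz-closed hypothesis, it follows immediately that $\theta_g(C_0(\tilde\G))=\theta_g(\theta(C_0^u(\G)))=\phi(C_0^u(\G))=C_0(H)$. No density argument, no spectrum-side reasoning, no separate check of injectivity is needed --- surjectivity of $\theta_g$ onto $C_0(H)$ is exactly what it means for $H$ to be a (Woronowicz) closed subgroup of the classical group $\tilde\G$, and for classical groups this is the usual notion. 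Your ``main obstacle'' is not an obstacle at all once you use that $\theta$ is onto.
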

\begin{proof}
By Theorem~\ref{thm:uni_prop}, the morphism $H\rightarrow\G$ factors through
a homomorphism $H\rightarrow\tilde\G$.  To be precise, let the morphism
$H\rightarrow \G$ be given by a Hopf
$*$-homomorphism $\phi:C_0^u(\G) \rightarrow C_0(H)$, which is surjective by
assumption.  Let $\theta:C_0^u(\G) \rightarrow C_0(\tilde\G)$ be the surjective
Hopf $*$-homomorphism from Theorem~\ref{prop:gelfand}, so by
Theorem~\ref{thm:uni_prop} there is a Hopf $*$-homomorphism $\psi:C_0(\tilde\G)
\rightarrow M(C_0(H))$ with $\psi\circ\theta = \phi$.  As $\theta$ is surjective,
it follows that $\psi$ maps into and onto $C_0(H)$.  Thus $\psi$ identifies
$H$ as a closed subgroup of $\tilde\G$.

Equivalently, as explained after
Theorem~\ref{thm:uni_prop}, we have on the dual side that $C^*(H)
\rightarrow M(C_0^u(\hh\G))$ factors through $C^*(H)\rightarrow C^*(\tilde\G)$.
By Herz restriction, this map drops to an injective normal $*$-homomorphism
$VN(H) \rightarrow VN(\tilde\G)$,
and by the result above, we have $VN(\tilde\G) \rightarrow L^\infty(\hh\G)$.
The composition gives the required injective normal $*$-homomorphism
$VN(H)\rightarrow L^\infty(\hh\G)$.
\end{proof}

\vspace{5ex}

\noindent\emph{Author's Address:}
\parbox[t]{3in}{School of Mathematics\\
University of Leeds\\
Leeds\\
LS2 9JT\\
United Kingdom}

\bigskip\noindent\emph{Email:} \texttt{matt.daws@cantab.net}

\end{document}